\documentclass[11pt]{article}

\usepackage{a4wide,amsfonts,amsmath,latexsym,amsthm,amssymb,euscript,graphicx,units,mathrsfs,color}
\usepackage{amsmath}
\usepackage{amssymb}
\usepackage{amsthm}
\usepackage{latexsym}
\usepackage{color}
\usepackage{enumerate}
\usepackage{graphicx}
\usepackage{color} 

\usepackage[T1]{fontenc}
\usepackage[english]{babel}
\usepackage{geometry}
\geometry{hmargin=4.2cm,vmargin=4.2cm}

\DeclareSymbolFont{calletters}{OMS}{cmsy}{m}{n}
\DeclareSymbolFontAlphabet{\mathcal}{calletters}
\usepackage{float}
\newfloat{figure}{h}{lof}
\floatname{figure}{\figurename}
%
%

\def\be{\begin{eqnarray}}
\def\Ee{\end{eqnarray}}

\def\b*{\begin{eqnarray*}}
\def\E*{\end{eqnarray*}}

\newcommand{\real}{\mathbb{R}}

%
%
\newtheorem{Theorem}{Theorem}[part]

\newtheorem{Proposition}{Proposition}[part]

\newtheorem{Lemma}{Lemma}[part]
\newtheorem{Corollary}{Corollary}[part]
\newtheorem{Remark}{Remark}[part]

\makeatletter \@addtoreset{equation}{section}

\@addtoreset{Definition}{section}

\@addtoreset{Theorem}{section}

\@addtoreset{Proposition}{section}

\@addtoreset{Property}{section}

\@addtoreset{Assumption}{section}

\@addtoreset{Corollary}{section}

\@addtoreset{Lemma}{section}

\@addtoreset{Remark}{section}

\@addtoreset{Example}{section}

\@addtoreset{Properties}{section}

\@addtoreset{footnote}{page}

%
%


\newcommand{\No}[1]{\left\|#1\right\|}     
\newcommand{\abs}[1]{\left|#1\right|}     



\addtolength{\oddsidemargin}{-0.1 \textwidth}
\addtolength{\textwidth}{0.2 \textwidth}
\addtolength{\topmargin}{-0.1 \textheight}
\addtolength{\textheight}{0.2 \textheight}
\addtolength{\parindent}{-0.02   \textwidth}


\def \D{\mathbb{D}}
\def \E{\mathbb{E}}
\def \F{\mathbb{F}}
\def \H{\mathbb{H}}

\def \N{\mathbb{N}}
\def \P{\mathbb{P}}

\def \R{\mathbb{R}}


\def\Fc{{\cal F}}

\def\Kc{{\cal K}}








\def \Sum{\displaystyle\sum}



\def\={\;=\;}
\def\.{\;.}

\def\reff#1{{\rm(\ref{#1})}}


\def \i{1\!\mbox{\rm I}}
\def\1{{\bf 1}}


 \def\normeL2#1{\left\|{#1}\right\|_{L^2}}

\setlength\parindent{0pt}

\def\E{\mathbb{E}}
\def\P{\mathbb{P}}
\def\h{\mathfrak{H}}
\def\S{\mathbb{S}}
\def\H{\mathbb{H}}

\newcommand\tr[1]{\circ \tau_{#1}}
\def\dh{\dot{h}}

\allowdisplaybreaks


\def\b{\textcolor{blue}}

\author{Thibaut Mastrolia \footnote{Universit\'e Paris-Dauphine, CEREMADE UMR CNRS 7534, Place du Mar\'echal De Lattre De Tassigny, 75775 Paris Cedex 16, FRANCE, \texttt{mastrolia@ceremade.dauphine.fr}} \and Dylan Possama\"i\footnote{Universit\'e Paris-Dauphine, CEREMADE UMR CNRS 7534, Place du Mar\'echal De Lattre De Tassigny, 75775 Paris Cedex 16, FRANCE, \texttt{possamai@ceremade.dauphine.fr}} \and Anthony R\'eveillac\footnote{IMT UMR CNRS 5219, Universit\'e de Toulouse, INSA de Toulouse, 135 avenue de Rangueil, 31077 Toulouse Cedex 4, FRANCE,  \texttt{anthony.reveillac@insa-toulouse.fr} }}

\title{On the Malliavin differentiability of BSDEs}

\begin{document}

\maketitle

\begin{abstract}
In this paper we provide new conditions for the Malliavin differentiability of solutions of Lipschitz or quadratic BSDEs. Our results rely on the interpretation of the Malliavin derivative as a G\^ateaux derivative in the directions of the Cameron-Martin space. Incidentally, we provide a new formulation for the characterization of the Malliavin-Sobolev type spaces $\D^{1,p}$. 
\end{abstract}

\vspace{1em} 
{\noindent \textit{Key words:} Malliavin's calculus; abstract Wiener space; BSDEs.
}

\vspace{1em}
\noindent
{\noindent \textit{AMS 2010 subject classification:} Primary: 60H10; Secondary: 60H07.
\normalsize
}

\section{Introduction}

Backward Stochastic Differential Equations (BSDEs) have been studied extensively in the last two decades as they naturally arise in the context of stochastic control problems (for instance in Finance see \cite{EPQ}), and as they provide a probabilistic representation for solution to semi-linear parabolic PDEs, via a non-linear Feynman-Kac formula (see \cite{pardouxpeng}). Before going further let us recall that this class of equations has been introduced in \cite{Bismut_78,Pardoux_Peng90,pardouxpeng} and that a BSDE can be formulated as:
\begin{equation}
\label{eq:BSDE}
Y_t = \xi +\int_t^T f(s,Y_s,Z_s)ds -\int_t^T Z_s dW_s, \quad t\in [0,T],
\end{equation} 
where $T$ is a fixed positive number, $W:=(W_t)_{t\in [0,T]}$ is a one-dimensional Brownian motion defined on a probability space $(\Omega,\mathcal{F}_T,\P)$ with natural filtration $(\mathcal{F}_t)_{t\in [0,T]}$. The data of the equation are the $\mathcal{F}_T$-measurable r.v. $\xi$, called the terminal condition, and the mapping $f:[0,T] \times \Omega \times \real^2 \longrightarrow \real$ which is a progressively measurable process and where according to the notations used in the literature we write $f(t,y,z)$ for $f(t,\omega,y,z)$. A solution to the BSDE \eqref{eq:BSDE} is then a pair of predictable processes $(Y,Z)$, with appropriate integrability properties, such that Relation \eqref{eq:BSDE} holds $\P-$a.s.

\vspace{0.5em}
When dealing with applications, one needs to obtain regularity properties on the solution $(Y,Z)$, such as the Malliavin differentiability of the random variables $Y_t$, $Z_t$ at a given time $t$ in $[0,T]$. Note that for the $Z$ component this question needs to be clarified a little bit because of the definition of $Z$, cf. Theorem \ref{thm:yzd12} for a precise statement. More precisely, one needs to answer the following question: 
$$\textrm{Which conditions on the data $\xi$ and $f$ in \eqref{eq:BSDE} ensure that $Y_t$, $Z_t$ are Malliavin differentiable?}$$ 
This question was first addressed in the paper \cite{pardouxpeng} in a Markovian setting, that is when $\xi:=g(X_T)$ and $f(t,\omega,y,z):=h(t,X_t(\omega),y,z)$ where $g:\real \to \real$ and $h:[0,T]\times \real^3 \to \real$ are regular enough deterministic functions and $X:=(X_t)_{t\in [0,T]}$ is the unique solution to a SDE of the form:
$$ X_t=X_0+\int_0^t \sigma(s,X_s) dW_s +\int_0^t b(s,X_s) ds, \quad t\in [0,T], $$
with regular enough coefficients $\sigma, b:[0,T]\times \real \longrightarrow \real$. In that framework, Pardoux and Peng proved in \cite[Proposition 2.2]{pardouxpeng} that, under (essentially) the following conditions:
\begin{itemize}
\item[(PP1)] $g$ is continuously differentiable with bounded derivative.
\item[(PP2)] $h$ is continuously differentiable in $(x,y,z)$ with bounded derivatives uniformly in time, 
\end{itemize}
$Y_t$ is Malliavin differentiable at any time $t$, with a similar statement for $Z$, and the Malliavin derivatives of $Y$ and $Z$ provide a solution to an explicit linear BSDE. To be more precise, in \cite{pardouxpeng} the authors make one assumption for the whole paper which is stronger than (PP1)-(PP2) above. However a careful reading of the proof of \cite[Proposition 2.2]{pardouxpeng} enables one to conclude that Conditions (PP1)-(PP2) are sufficient to obtain the Malliavin differentiability of the solution. Assumptions (PP1)-(PP2) look pretty intuitive since they basically require the Malliavin differentiability of the terminal condition $\xi$ and of the generator $f$ once the component $(y,z)$ are frozen, i.e., of the process $(t,\omega) \longmapsto f(t,\omega,y,z)$ for given $(y,z)$. Hence, it is natural to expect that the latter conditions can be easily generalized to the non-Markovian framework. Unfortunately, the first result in that direction, given by El Karoui, Peng and Quenez in \cite{EPQ}, requires more stringent conditions than the aforementioned intuitive ones. More explicitly, the main result in \cite{EPQ} concerning the Malliavin differentiability of the solution to the BSDE \eqref{eq:BSDE} (essentially) involves the following conditions (see \cite[Proposition 5.3]{EPQ} for a precise statement):
\begin{itemize}
\item[(EPQ1)] $\xi$ is Malliavin differentiable\footnote{\textit{i.e.} $\xi$ is in $\D^{1,2}$} and $\E[|\xi|^4]<+\infty.$
\item[(EPQ2)] At any time $t\in[0,T]$, the r.v. $\omega \longmapsto f(t,\omega,Y_t,Z_t)$ is Malliavin differentiable\footnote{In fact as an adapted process it belongs to $\D^{1,2}$, we refer to the space $\mathbb{L}^a_{1,2}$ whose precise definition is recalled in \cite[p.~58]{EPQ}} with Malliavin derivative denoted by $D_\cdot f(t,Y_t,Z_t)$ such that there exists a predictable process $K^{\theta}:=(K_t^\theta)_{t\in [0,T]}$ with $\int_0^T \E[(\int_0^T|K_s^\theta|^2ds)^2] d\theta<+\infty$, and such that for any $(y_1,y_2,z_1,z_2) \in \real^4$ it holds for a.e. $\theta \in [0,T]$ that:
$$ |D_\theta f(t,\omega,y_1,z_1) - D_\theta f(t,\omega,y_2,z_2)| \leq K^\theta_t(\omega) (|y_1-y_2|+|z_1-z_2|).$$
\end{itemize}      
Roughly speaking, this means that $\xi$ and $\omega \longmapsto f(t,\omega,y,z)$ have to be Malliavin differentiable, but in order to prove that $Y$ and $Z$ are Malliavin differentiable, one needs to enforce an extra regularity conditions on each of the data: that is $\xi$ has a finite moment of order $4$, and the Malliavin derivative of the driver $f$ is Lipschitz continuous in $(y,z)$ with a sufficiently integrable stochastic Lipschitz constant $K$. Note that a careful reading of the proof allows one to conclude that the moment conditions on $\xi$ and $Df$ can actually be relaxed to hold only in $L^{2+\varepsilon}$ for some $\varepsilon>0$. Besides, as noted in \cite[Remark at the bottom of p.~59]{EPQ}, if $K$ is bounded then the proof can be modified so that the extra integrability condition on $\xi$ (i.e. $\E[|\xi|^4]<+\infty$) can be dropped. However, even in this case, one can check that in the Markovian framework, Conditions (EPQ1)-(EPQ2) are strictly stronger than Conditions (PP1)-(PP2).

\vspace{0.5em}
Since these two seminal papers, the most notable extension was concerned with the study of the Malliavin differentiability of $(Y,Z)$ in a quadratic setting, that is to say when the generator $f$ has quadratic growth in the $z$ variable, a problem addressed in \cite{ank,DosReis_Imkeller, dosreis,irr}. Notice nonetheless that the proofs in these references are strongly influenced by the ones in the Lipschitz setting of \cite{pardouxpeng,EPQ}, as they all start by approximating the quadratic generators by Lipschitz ones, to which they apply the results of \cite{pardouxpeng,EPQ}. The applications of the Malliavin differentiability of BSDEs also received a lot of attention in the literature. Hence, it was used in the context of numerical schemes for BSDEs in, among others, \cite{briand_labart,hns}, or to study the existence and regularity of densities for the marginal laws of $(Y,Z)$ in \cite{ab,ak,mpr}. However, in all the above references, the authors always refer to either \cite{pardouxpeng,EPQ} in a Lipschitz context or \cite{ank} in a quadratic context, when stating differentiability results in the Malliavin sense (see for instance the sentence before Theorem 2.2 in \cite{ak}, or Step 2 in the proof of Theorem 3.3 in \cite{ab}, which refers to \cite{ak}, or the proof of Part a) of Theorem 2.6 in \cite{hns}, or Proposition 3.2 in \cite{mpr}). \vspace{0.5em}

The aim of this paper is to provide an alternative sufficient condition to (EPQ1)-(EPQ2) for the Malliavin differentiability of the solution to a BSDE of the form \eqref{eq:BSDE} in the general non-Markovian setting. Our main result in that direction is Theorem \ref{thm:yzd12} below, using a fundamentally different approach from \cite{EPQ,pardouxpeng}, as well as different type of assumptions. Since they involve some notations concerning the analysis on the Wiener space, we refrain from detailing them immediately, and rather explain informally what are the main differences between our approach and the one of \cite{EPQ}. A natural way to solve a BSDE of the form \eqref{eq:BSDE} when the driver $f$ is Lipschitz in $(y,z)$ is to make use of a Picard iteration, that is to say a family $(Y^n,Z^n)$ of solutions to BSDEs satisfying\begin{equation}
\label{eq:BSDEn}
Y^n_t =\xi +\int_t^T f(s,Y_s^{n-1},Z_s^{n-1}) ds -\int_t^T Z_s^n dW_s, \quad t\in [0,T],
\end{equation}
where $Y^0\equiv Z^0\equiv 0$. Then, a fixed point argument allows one to construct, in appropriate spaces, a solution $(Y,Z)$ to Equation \eqref{eq:BSDE}. If $\xi$ and $f(t,y,z)$ are Malliavin differentiable, then so is $(Y^n,Z^n)$. Then, it just remains to prove that this property extends to the limits $Y$ and $Z$ of respectively $Y^n$ and $Z^n$, in appropriate spaces. More precisely this is done by a uniform (in $n$) control of the Sobolev norms of $Y^n,Z^n$ or equivalently by proving that the Malliavin derivatives $(DY^n,DZ^n)$ of $(Y^n,Z^n$) converge to the solution of a linear BSDE whose solution will be the Malliavin derivatives $(DY, DZ)$ of $Y$ and $Z$. This last step is exactly where the extra regularity (EPQ1)-(EPQ2) is needed. It appears quite clearly that for this approach, the conditions of \cite{EPQ} cannot be optimized in the general case. Even though this idea seems pretty natural, it is based on a choice somehow arbitrary. Indeed, a necessary condition for $DY_t$ to be well-defined at a given time $t$, is that there exists a sequence of random variables $(F^n)_n$ converging to $Y_t$ in $L^2$ such that each variable $F^n$ is Malliavin differentiable with derivative $DF^n$ and such that $DF^n$ converges, with respect to a suitable norm, to $DY_t$. As a consequence, in the approach described above, one believes that this sequence $(F^n)_n$ can be chosen to be the Picard iteration $(Y^n)_n$. Once again, this idea looks very natural, according to the same type of proofs for SDEs, but then one sees that in the BSDE framework this intuitive idea leads to pretty heavy assumptions. We elaborate a little bit more on this point in Section \ref{section:discu}.

\vspace{0.5em}
Regarding the discussion above, one could think of trying to find a sequence of processes known to approximate the Malliavin derivative of $Y$ (and $Z$) when $Y$ is Malliavin differentiable. This approximation is provided by the well-known interpretation of the Malliavin derivative as a G\^ateaux derivative in the directions of the Cameron-Martin space. More precisely, a necessary condition for $Y_t$ to belong to $\D^{1,2}$, is that for any absolutely continuous function $h$ starting from $0$ at $0$ with derivative denoted $\dot{h}$, the difference quotient $\varepsilon^{-1}(Y_t(\omega+\varepsilon h)-Y_t(\omega))$ converges, in a sense to be made precise, as $\varepsilon$ goes to $0$ to $\langle D Y_t,\dot{h}\rangle_{L^2([0,T])}$. This fact was initially given by Malliavin and then extended by Stroock, Shigekawa, Kusuoka and Sugita in a series of papers \cite{Malliavin,stroock,Shige,Kusuoka,Sugita}. In addition, Sugita proved in \cite{Sugita} that a r.v. $F$ is Malliavin differentiable if it is \textit{ray absolutely continuous}\footnote{we refer to Section \ref{section:Malliavin} where this notion is recalled} and if it is stochastically G\^ateaux differentiable. Using the main ideas of \cite{Sugita} we provide incidentally a new formulation of the characterization of the Malliavin-Sobolev type spaces $\D^{1,p}$ in Theorem \ref{th:newcarD}. Since we did not find explicitly this characterization in the literature, we believe that this result is new and maybe interesting by itself. The main point is that this formulation is especially handy when dealing with stochastic equations like BSDEs. With this result at hand, we obtain new conditions (see Assumptions (D), ($H_1$) and ($H_2$) at the beginning of Section \ref{section:BSDEs}) for $Y,Z$ to be Malliavin differentiable, see Theorem \ref{thm:yzd12}. Our assumptions refine those of \cite{pardouxpeng,EPQ} in the Markovian case, and our approach is directly applicable to quadratic growth BSDEs since we do not rely on any approximation procedure. We refer the reader to Section \ref{section:applidiscus} for some examples and a discussion on the differences between our approach and the one of \cite{pardouxpeng,EPQ}.

\vspace{0.5em}
The rest of the paper is organised as follows. We start below with some preliminaries. Then we turn in Section \ref{section:Wiener} to some elements of analysis on the Wiener space. Our characterization of the sets $\D^{1,p}$ is given in Section \ref{section:Malliavin}, and the material on the Malliavin differentiability of BSDEs itself is contained in Section \ref{section:BSDEs}. We provide applications and a comparison of the results in Section \ref{section:applidiscus}. Finally, we extend our approach to quadratic growth BSDEs in Section \ref{section:quadratic}.

\section{Preliminaries}
\label{sec:preli}

\subsection{Notations}

We fix throughout the paper a time horizon $T>0$ and $d$ a positive integer. For any positive integer $k$, we denote by $\|\cdot\|$ the Euclidian norm in $\R^{k}$ and by $\cdot$ the inner product, without mention of $k$ which will be clear in the context. For any positive integers $n$ and $m$, we identify $\mathbb R^{n\times m}$ with the space of real matrices with $n$ rows and $m$ columns, endowed with the Euclidean norm on $\R^{n\times m}$. Let $M$ be in $\R^{n\times m}$, $1\leq j\leq n$ and $1\leq \ell\leq m$, we denote by $M^{j,:}\in \R^{1\times m}$ (resp. $M^{:,\ell}\in \R^{n,1}$) its $j$-th row (resp. its $\ell$-th column). We set $M^\top\in \R^{m\times n}$ to be the transpose of $M$. We also identify $\R^k$ with $\R^{1,k}$. Let now $\Omega:=C_0([0,T],\R^d)$ be the canonical Wiener space of continuous function $\omega:=(\omega^1,\ldots,\omega^d)^\top$ from $[0,T]$ to $\R^d$ such that $\omega(0)=(0,\ldots,0)^\top$. Let $W:=(W_t^1,\ldots,W_t^d)^\top_{t\in [0,T]}$ be the canonical Wiener process, that is, for any time $t$ in $[0,T]$, $W_t$ denotes the evaluation mapping: $W_t^i(\omega):=\omega_t^{i}$ for any element $\omega$ in $\Omega$ and $i$ in $\{1,\ldots,d\}$. We set $\F^o$ the natural filtration of $W$. Under the Wiener measure $\P_0$, the process $W$ is a standard Brownian motion and we denote by $\F:=(\mathcal F_t)_{t\in[0,T]}$ the usual augmentation (which is right-continuous and complete) of $\F^o$ under $\P_0$. Unless otherwise stated, all the expectations considered in this paper will have to be understood as expectations under $\P_0$, and all notions of measurability for elements of $\Omega$ will be with respect to the filtration $\F$ or the $\sigma$-field $\mathcal F_T$. 

\vspace{0.5em}
For any Hilbert space $\mathcal K$, for any $p\geq 1$ and for any $t\in [0,T]$, we set $L^p([t,T];\mathcal K)$ to be following space
$$L^p([t,T];\mathcal K):=\left\{f:[t,T]\longrightarrow\mathcal K, \text{ Borel-measurable, s.t. }\int_t^T \| f(s)\|^p_{\mathcal{K}}ds <+\infty \right\},$$
where the norm $\No{\cdot}_{\Kc}$ is the one canonically induced by the inner product on $\Kc$. We denote, for simplicity, by $\h:=L^2([0,T];\R^{ d})$ and by $\langle \cdot, \cdot \rangle_\h$ its canonical inner product, that is to say
$$\langle f, g\rangle_\h:=\int_0^Tf(s) \cdot g(s)ds = \sum_{i=1}^d \int_0^Tf^i(s) g^i(s) ds,\ (f,g)\in \h^2.$$
Let now $H$ be the Cameron-Martin space that is the space of functions in $\Omega$ which are absolutely continuous with square-integrable derivative and which start from $0$ at $0$:
$$H:=\left\{ h:[0,T] \longrightarrow \real^d, \; \exists \dot{h}\in\h, \; h(t)=\int_0^t \dot{h}(x)dx, \; \forall t\in [0,T]\right\},$$
For any $h$ in $H$, we will always denote by $\dot{h}$ a version of its Radon-Nykodym density with respect to the Lebesgue measure. 
Then, $H$ is an Hilbert space equipped with the inner product $\langle h_1,h_2 \rangle_{H}:=\langle \dot{h_1},\dot{h_2} \rangle_{\h}$, for any $(h_1, h_2)\in H\times H$, and with associated norm $\|h\|_H^2:=\langle \dot h, \dot h \rangle_\h$.
Define next $L^p(\Kc)$ as the set of all $\mathcal F_T$-measurable random variables $F$ which are valued in an Hilbert space $\mathcal{K}$, and such that $\No{F}_{L^p(\Kc)}^p<+\infty$, where
$$\No{F}_{L^p(\Kc)}:=\left(\E\left[\No{F}_{\mathcal{K}}^p\right]\right)^{1/p}.$$ 

\vspace{0.5em}

Let now $\mathcal{S}$ be the set of cylindrical functionals, that is the set of $\mathbb R$-valued random variables $F$ of the form
\begin{equation}
\label{eq:cylindrical}
F=f(W(h_1),\ldots,W(h_n)), \quad (h_1,\ldots,h_n) \in H^n, \; f \in C^\infty_b(\real^n), \text{ for some }n\geq 1,
\end{equation}
where $W(h):=\int_0^T \dot{h}_s\cdot  dW_s:= \sum_{i=1}^d \int_0^T \dot{h}_s^{i} dW_s^{i}$ for any $h$ in $H$ and where $C^\infty_b(\real^n)$ denotes the space of bounded mapping which are infinitely continuously differentiable with bounded derivatives. For any $F$ in $\mathcal S$ of the form \eqref{eq:cylindrical}, the Malliavin derivative $\nabla F$ of $F$ is defined as the following $H$-valued random variable:
\begin{equation}
\label{eq:DF}
\nabla F:=\sum_{i=1}^n f_{x_i}(W(h_1),\ldots,W(h_n)) h_i,
\end{equation}
where $f_{x_i}:=\frac{df}{dx_i}$.
It is then customary to identify $\nabla F$ with the stochastic process $(\nabla_t F)_{t\in [0,T]}$. More precisely, we define for any $(t,\omega)\in [0,T]\times \Omega,$
$$\nabla_t F(\omega):=\sum_{i=1}^n f_{x_i}\left(W(h_1)(\omega),\ldots,W(h_n)(\omega)\right) h_i(t).$$  Denote then by $\mathbb{D}^{1,p}$ the closure of $\mathcal{S}$ with respect to the Malliavin-Sobolev semi-norm $\|\cdot\|_{1,p}$, defined as:
$$ \|F\|_{1,p}:=\left(\E\left[|F|^p\right] + \E\left[\|\nabla F\|_{H}^p\right]\right)^{1/p}. $$ We set $\D^{1,\infty}:= \bigcap_{p\geq 2} \D^{1,p}$.
In order to link our notations with the ones of the related papers \cite{pardouxpeng,EPQ} we make use of the notation $DF$ to represent the derivative of $\nabla F$ as: 
$$ \nabla_t F=\int_0^t D_s F ds, \quad t\in [0,T]. $$ 

We denote by $\delta: L^p(H)\longrightarrow L^p(\R)$ the adjoint operator of $\nabla$ by the following duality relationship:
$$ \E[F\delta(u)]=\E[\langle \nabla F, u\rangle_{H}],\quad \forall u\in \text{dom}(\delta), \quad \textrm{where}$$
$$ \text{dom}(\delta):=\left\{u\in L^p(H), \;\exists c_u>0, \; |\E[\langle \nabla F,u\rangle_{H}]| \leq c_u\| F\|_{L^p(\R)}, \; \forall F\in \D^{1,p} \right\}.$$
$\delta$ is also known under the name of Skorohod (or divergence) operator. Recall that any element $u$ of the form $u:=G h$ with $G$ in $\mathcal{S}$ and $h$ in $H$ belongs to ${\rm{dom}}(\delta)$ and that
\begin{equation}
\label{eq:deltaprod}
\delta(G h)= G W(h) -\langle \nabla G, h\rangle_H,
\end{equation}
see for example \cite[Relation (1.46)]{Nualartbook}. Note that for any $h$ in $H$, $\delta(h)=W(h)$.

\vspace{0.5em}
Notice that in \cite{Sugita} the cylindrical space, that we will denote by $\mathcal P$ in the following, is the space of functionals $F$ of the form \eqref{eq:cylindrical} with $f$ a polynomial. More precisely let $\mathcal{P}$ be the set of polynomial cylindrical functionals, that is the set of random variables $F$ of the form
\begin{equation}
\label{eq:cylindricalP}
F=f(W(h_1),\ldots,W(h_n)), \quad (h_1,\ldots,h_n) \in H^n, \; f \in \R^n[X], \text{ for some }n\geq 1,
\end{equation}
where $\R^n[X]$ denotes the set of polynomials of degree less or equal to $n$. However, the closures of both $\mathcal S$ and $\mathcal P$ with respect to any $\|\cdot\|_{1,p}$ coincide, as any polynomial together with its derivative can be approximated in $L^p(\real^n)$ (see Lemma \ref{lemma:tec1} below).

\begin{Lemma}
\label{lemma:tec1}
Let $G$ be in $\mathcal{P}$. There exists a sequence $(G^N)_{N\geq 1} \subset \mathcal{S}$ such that $\lim_{N\to+\infty} G^N = G$ in $\mathbb{D}^{1,r}$ for any $r \geq 1$.
\end{Lemma}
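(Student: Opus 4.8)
The plan is to reduce the statement to a one-variable approximation fact: given a polynomial $p$ on $\R^n$, produce smooth bounded functions $p^N$ with bounded derivatives such that $p^N \to p$ and $\nabla p^N \to \nabla p$ in $L^r(\R^n, \gamma_n)$ for every $r \geq 1$, where $\gamma_n$ is the Gaussian law of $(W(h_1),\dots,W(h_n))$. Indeed, if $G = f(W(h_1),\dots,W(h_n))$ with $f \in \R^n[X]$, and $f^N \in C_b^\infty(\R^n)$ satisfies $f^N \to f$ and $\partial_{x_i} f^N \to \partial_{x_i} f$ in all $L^r(\gamma_n)$, then setting $G^N := f^N(W(h_1),\dots,W(h_n)) \in \mathcal S$ we get $\E[|G^N - G|^r] \to 0$ directly, and since $\nabla G^N - \nabla G = \sum_{i=1}^n \big(f^N_{x_i} - f_{x_i}\big)(W(h_1),\dots,W(h_n))\, h_i$, we have $\|\nabla G^N - \nabla G\|_H \leq \big(\sum_i \|h_i\|_H^2\big)^{1/2}\big(\sum_i |f^N_{x_i} - f_{x_i}|^2(W(h_1),\dots,W(h_n))\big)^{1/2}$, whose $L^r$ norm tends to $0$ by the $L^r$ convergence of the partial derivatives (the finite constant $\sum_i\|h_i\|_H^2$ and the equivalence of norms on $\R^n$ are harmless). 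Hence $\|G^N - G\|_{1,r} \to 0$ for every $r \geq 1$, which is exactly $G^N \to G$ in $\D^{1,r}$.

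So the core of the proof is the construction of $f^N$. First I would handle the model case of a monomial, or rather just any polynomial $f$, and regularize by a combination of cutoff and mollification. A clean choice: pick $\chi \in C_b^\infty(\R^n)$ with $\chi \equiv 1$ on the ball of radius $1$, $\chi \equiv 0$ outside the ball of radius $2$, $0 \leq \chi \leq 1$, and $\|\nabla\chi\|_\infty \leq C$; set $\chi_N(x) := \chi(x/N)$, and define $f^N := (\chi_N f) * \rho_{1/N}$ where $\rho_\varepsilon$ is a standard mollifier. Each $f^N$ is smooth, compactly supported, hence in $C_b^\infty(\R^n)$, so $G^N \in \mathcal S$. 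Then I would check the two convergences. For $f^N \to f$: write $f^N - f = (\chi_N f)*\rho_{1/N} - \chi_N f + (\chi_N - 1) f$; the second piece is supported in $\{\|x\| \geq N\}$ and dominated there by $|f|$, so its $L^r(\gamma_n)$ norm vanishes by dominated convergence since $f$ is polynomially bounded and $\gamma_n$ has all moments; the first piece converges to $0$ in every $L^r(\gamma_n)$ because mollification converges locally uniformly for continuous functions and $\chi_N f$ is uniformly locally bounded with the mollification parameter $1/N \to 0$, again using Gaussian integrability to upgrade local control to $L^r(\gamma_n)$ control (one can bound $\|(\chi_N f)*\rho_{1/N}\|$ by $\sup_{\|y\|\le 1}|\chi_N f|(x-y)$ which is polynomially bounded uniformly in $N$).

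For the derivative convergence, note $\nabla f^N = (\nabla(\chi_N f)) * \rho_{1/N} = (\chi_N \nabla f + f \nabla \chi_N) * \rho_{1/N}$, and $\nabla\chi_N(x) = N^{-1}(\nabla\chi)(x/N)$ is bounded by $C/N$ and supported in $\{N \leq \|x\| \leq 2N\}$; so $f\nabla\chi_N$ is bounded pointwise by $(C/N)|f|\mathbf 1_{\{\|x\|\ge N\}}$, which tends to $0$ in every $L^r(\gamma_n)$ (both because of the $1/N$ factor and because of the shrinking support against a Gaussian tail). The term $(\chi_N \nabla f)*\rho_{1/N}$ is handled exactly as the function case above, converging to $\nabla f$ in every $L^r(\gamma_n)$. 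Combining, $\nabla f^N \to \nabla f$ in all $L^r(\gamma_n)$, completing the argument.

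The only mildly delicate point, and the one I would write most carefully, is the passage from local/pointwise convergence of the mollified polynomials to $L^r$ convergence against the Gaussian measure: one must produce a fixed $L^r(\gamma_n)$-dominating function uniform in $N$, which works because all the quantities involved are bounded by a fixed polynomial in $\|x\|$ (times possibly a bounded factor), and polynomials are in $L^r(\gamma_n)$ for every finite $r$ — so dominated convergence applies. There is nothing genuinely hard here; the statement is essentially the density of $C_b^\infty$ in the weighted Sobolev space generated by polynomials, and the proof is a routine cutoff-and-mollify. I expect no real obstacle, only bookkeeping.
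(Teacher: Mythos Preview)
Your proposal is correct and follows essentially the same route as the paper: multiply the polynomial by a smooth cutoff $\chi(x/N)$ and use Gaussian integrability to push the error terms to zero in every $L^r$. The only difference is that the paper skips the mollification step entirely --- since $f$ is already a polynomial, $\chi_N f$ is smooth with compact support and hence lies in $C_b^\infty(\R^n)$ directly, so convolving with $\rho_{1/N}$ is redundant and only adds bookkeeping.
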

\begin{proof}
Let $G:=f(W(h_1),\cdots,W(h_n))$ with $n\geq 1$, $h_i$ in $H$ and $f$ in $\real^n[X]$. Without loss of generality, we assume that the family $(h_1,\ldots,h_n)$ is orthonormal in $H$. Let $\theta$ be a cutoff function, that is a mapping $\theta:\real^n \longrightarrow \real^+$ such that $\theta(x)=1$ if $\|x\|<1$, $\theta(x)=0$ for $\|x\|\geq 2$, and such that $\theta\in C_b^\infty(\real^n)$. For $N\geq 1$, we set:
$$ G^N:=f^N(W(h_1),\cdots,W(h_n)), \quad f^N(x):=f(x) \times \theta(x/N), \; x \in \real^n.$$
Note that each random variable $G^N$ belongs to $\mathcal{S}$. Fix $r\geq 1$. We aim in proving that $\lim_{N\to +\infty} \|G^N-G\|_{1,r}=0$. On the one hand, 
\begin{align*}
\E[|G^N-G|^r] &= \E\left[|G|^r \abs{\theta\left(\frac{W(h_1)}{N},\cdots,\frac{W(h_n)}{N}\right)-1}^r\right]\\
&\leq \E[|G|^{2r}]^{1/2} \E\left[\abs{\theta\left(\frac{W(h_1)}{N},\cdots,\frac{W(h_n)}{N}\right)-1}^{2r}\right]^{1/2}\\
&\leq C \int_{\real^n\setminus B^n(0,N)} e^{-\|x\|^2/2} dx,
\end{align*} where $C$ is a positive constant. Hence, $\lim\limits_{N\to +\infty}\E[|G^N-G|^r] =0$.
We now turn to the proof of the convergence of the derivatives. We have
$$ \nabla G^N = \sum_{i=1}^n \frac{\partial f^N}{\partial x_i}(W(h_1),\cdots,W(h_n)) h_i,$$
with $\frac{\partial f^N}{\partial x_i}(x) = \frac{\partial f}{\partial x_i} \theta(x/N) + N^{-1} f(x) \frac{\partial \theta}{\partial x_i}(x/N)$. Hence:
\begin{align*}
&\E\left[\|\nabla (G^N-G)\|_H^{2r}\right] = \sum_{i=1}^n \E\left[\abs{\frac{\partial f^N}{\partial x_i}-\frac{\partial f}{\partial x_i}}^{2r}(W(h_1),\cdots,W(h_n))\right]\\
&\leq C \left(\sum_{i=1}^n \E\left[\abs{\frac{\partial f}{\partial x_i}(W(h_1),\cdots,W(h_n))}^{2r} \abs{\theta\left(\frac{W(h_1)}{N},\cdots,\frac{W(h_n)}{N}\right)-1}^{2r}\right]\right.\\
&\hspace{0.9em} \left. + N^{-2r}\E\left[\abs{(\frac{\partial \theta}{\partial x_i} \times f)(W(h_1),\cdots,W(h_n))}^{2r} \right]\right)\\
&\underset{N\to +\infty}{\longrightarrow} 0.
\end{align*}
\qed
\end{proof}

\vspace{0.5em}
We conclude this section by introducing the following norms and spaces which are of interest when studying BSDEs. For any positive integers $p,n$, we set $\mathbb S_{n}^p$ the space of $\mathbb R^{ n}$-valued, continuous and $\mathbb F$-progressively measurable processes $Y$ s.t.
$$\No{Y}^p_{\mathbb S_{ n}^p}:=\mathbb E\left[\underset{0\leq t\leq T}{\sup} \|Y_t\|^p\right]<+\infty.$$
We denote by $\mathbb H_{n,d}^p$ the space of $\mathbb R^{n \times d}$-valued and $\F$-predictable processes $Z$ such that
$$\No{Z}^p_{\mathbb H_{n,d}^p}:=\mathbb E\left[\left(\int_0^T \sum_{j=1}^n \|Z_t^j\|^2 dt\right)^{\frac p2}\right]<+\infty.$$
We set $\mathbb S^p:=\mathbb S_{1}^p$ and $\H_{d}^p:=\H_{1,d}^p$.
\section{Some elements of analysis on the Wiener space}
\label{section:Wiener}

One of the main tool that we will use throughout this paper is the shift operator along directions in the Cameron-Martin space. More precisely, for any $h\in H$, we define the following shift operator $\tau_{h}:\Omega\longrightarrow\Omega$ by
$$\tau_{h}(\omega):=\omega + h:=(\omega^1+h^1,\ldots,\omega^d+h^d)^\top.$$
Note that the fact that $h$ belongs to $H$ ensures that $\tau_h$ is a measurable shift on the Wiener space. In fact, one can be a bit more precise, since according to \cite[Lemma B.2.1]{ustunelzakai} for any $\mathcal{F}_T$-measurable r.v. $F$ the mapping $h \longmapsto F \tr{h}$ is continuous in probability from $H$ to $L^0(\R^{ d})$, the space of real-valued and $\Fc_T$-measurable random variables, see Lemma \ref{lemma:ctsH} below. Taking $F={\rm Id}$, one gets that $\tau_h$ is a continuous mapping on $\Omega$ for any $h$ in $H$. We list below some other properties of such shifts.

\begin{Lemma}[Appendix B.2, \cite{ustunelzakai}]
\label{lemma:UstunelZakai}
Let $X$ and $Y$ be two $\mathcal F_T$-measurable random variables. If $X=Y$, $\P_0-$a.s., then for any $h$ in $H$, 
$$ X\tr{h}=Y\tr{h}, \ \P_0-a.s. $$
\end{Lemma}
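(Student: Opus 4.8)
The statement is a direct consequence of the Cameron--Martin theorem, which guarantees that for any $h\in H$ the shifted measure $\P_0\circ\tau_h^{-1}$ (the law of $\omega\mapsto\omega+h$ under $\P_0$) is equivalent to $\P_0$; indeed its Radon--Nikodym density with respect to $\P_0$ is the Girsanov exponential $\exp\big(W(h)-\tfrac12\|h\|_H^2\big)$, which is $\P_0$-a.s. strictly positive, so that $\P_0\ll\P_0\circ\tau_h^{-1}\ll\P_0$. The plan is therefore to rephrase the claim as a statement about null sets and then transport null sets through $\tau_h$.

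First I would reduce the lemma to the following assertion: for every $h\in H$ and every $\mathcal F_T$-measurable set $N$ with $\P_0(N)=0$, the set $\tau_h^{-1}(N)$ is again $\mathcal F_T$-measurable with $\P_0(\tau_h^{-1}(N))=0$. This suffices because, writing $N:=\{X\neq Y\}$, one has $N\in\mathcal F_T$, $\P_0(N)=0$ by hypothesis, and, since $(X\tr{h})(\omega)=X(\omega+h)$ and likewise for $Y$, the identity $\{X\tr{h}\neq Y\tr{h}\}=\{\omega:\ \omega+h\in N\}=\tau_h^{-1}(N)$ holds; the reduced assertion then yields $\P_0\big(\{X\tr{h}\neq Y\tr{h}\}\big)=0$, which is exactly the conclusion.

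To prove the reduced assertion I would proceed in two steps. For $N$ in the uncompleted Borel $\sigma$-field $\mathcal F^o_T$: since $\tau_h$ is continuous on $\Omega$ (as recalled just before the lemma, applying the continuity result with $F=\mathrm{Id}$), $\tau_h^{-1}(N)\in\mathcal F^o_T$, and by the change-of-variables formula $\P_0(\tau_h^{-1}(N))=(\P_0\circ\tau_h^{-1})(N)$, which vanishes because $\P_0\circ\tau_h^{-1}$ is equivalent to $\P_0$ and $\P_0(N)=0$. For a general $\P_0$-null set $N\in\mathcal F_T$, completeness of $\F$ provides $N'\in\mathcal F^o_T$ with $N\subseteq N'$ and $\P_0(N')=0$; then $\tau_h^{-1}(N)\subseteq\tau_h^{-1}(N')$, and since $\P_0(\tau_h^{-1}(N'))=0$ by the first step, $\tau_h^{-1}(N)$ is contained in a $\P_0$-null Borel set, hence lies in $\mathcal F_T$ with $\P_0(\tau_h^{-1}(N))=0$.

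The only genuinely substantive ingredient is the equivalence $\P_0\circ\tau_h^{-1}\sim\P_0$, i.e.\ the Cameron--Martin theorem; everything else is routine measure-theoretic bookkeeping, the mild subtlety being the passage from the Borel $\sigma$-field to its $\P_0$-completion $\mathcal F_T$, handled in the second step above. (This is precisely the content of Appendix B.2 in \cite{ustunelzakai}.)
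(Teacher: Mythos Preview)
Your argument is correct and is the standard one: the Cameron--Martin theorem gives $\P_0\circ\tau_h^{-1}\sim\P_0$, so $\P_0$-null sets are preserved under $\tau_h^{-1}$, and the completion step is handled cleanly. Note that the paper does not actually supply a proof of this lemma; it simply cites Appendix~B.2 of \cite{ustunelzakai}, where precisely this equivalence-of-measures argument is given, so your approach coincides with the intended one.
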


We recall, the quite surprising result that any r.v. is continuous in probability in the directions of the Cameron-Martin space. More precisely: 

\begin{Lemma}[Lemma B.2.1, \cite{ustunelzakai}]
\label{lemma:ctsH}
Let $F$ be a $\mathcal{F}_T$-measurable random variable. The mapping $h\longmapsto F\tr{h}$ is continuous from $H$ to $L^0(\real^{d})$ where the convergence is in probability.  
\end{Lemma}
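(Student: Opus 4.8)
The plan is to reduce to the case $h=0$ and then combine the Cameron--Martin theorem with a density argument. Since convergence in probability is metrizable and $H$ is a metric space, it suffices to establish sequential continuity. Given $h_n\to h$ in $H$, the identity $\tau_a\circ\tau_b=\tau_{a+b}$ yields $F\tr{h_n}=(F\tr{h})\tr{h_n-h}$; putting $G:=F\tr{h}$, which is again $\mathcal F_T$-measurable, the claim becomes: $G\tr{k_n}\to G$ in probability for every $k_n\to 0$ in $H$. By Lemma~\ref{lemma:UstunelZakai} the random variable $G\tr{k}$ does not depend on the chosen version of $G$, so we may also assume $G$ Borel-measurable on $\Omega$.

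I would then isolate two ingredients. (i) For $\Phi\in C_b(\Omega)$ and $k_n\to 0$ in $H$, the elementary bound $\sup_{t\in[0,T]}|k_n(t)|\le\sqrt T\,\|k_n\|_H$ shows $\omega+k_n\to\omega$ in $\Omega$, hence $\Phi\tr{k_n}\to\Phi$ pointwise, hence in probability by dominated convergence. (ii) The Cameron--Martin theorem gives $\P_0\circ\tau_k^{-1}\ll\P_0$ with density $\Lambda_k:=\exp\!\big(W(k)-\tfrac12\|k\|_H^2\big)$, i.e. $\E[\Psi\tr{k}]=\E[\Psi\,\Lambda_k]$ for every bounded measurable $\Psi$; in particular $\E[\Lambda_k]=1$. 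Since $W(k_n)$ is centred Gaussian with variance $\|k_n\|_H^2\to 0$, we get $\Lambda_{k_n}\to 1$ in probability, and as the means stay equal to $1$, Scheff\'e's lemma (valid also under convergence in probability) gives $\Lambda_{k_n}\to 1$ in $L^1(\P_0)$.

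With these at hand, fix $\varepsilon,\delta>0$. Truncating $G$ at a high level $M$ (the event $\{|G|>M\}$ having small probability) and using that $C_b(\Omega)$ is dense in $L^1(\P_0)$ on the Polish space $\Omega$, pick $\Phi\in C_b(\Omega)$ with $\P_0(|G-\Phi|>\varepsilon/3)<\delta$. Writing $A:=\{|G-\Phi|>\varepsilon/3\}$, one has
\[
\P_0\big(|G\tr{k_n}-G|>\varepsilon\big)\le I_n+\P_0\big(|\Phi\tr{k_n}-\Phi|>\tfrac\varepsilon3\big)+\P_0\big(|\Phi-G|>\tfrac\varepsilon3\big),
\]
where the last term is $<\delta$ by construction, the middle one tends to $0$ by (i), and, by (ii),
\begin{align*}
I_n:=\P_0\big(|(G-\Phi)\tr{k_n}|>\tfrac\varepsilon3\big)&=(\P_0\circ\tau_{k_n}^{-1})(A)=\int_A\Lambda_{k_n}\,d\P_0\\
&\le \P_0(A)+\|\Lambda_{k_n}-1\|_{L^1(\P_0)}\le\delta+o(1).
\end{align*}
Letting $n\to\infty$ and then $\delta\downarrow 0$ concludes.

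The statement is classical (it is Lemma~B.2.1 in \cite{ustunelzakai}), so no step is genuinely hard; the one point that matters is that the naive pointwise argument only works for continuous $\Phi$, and it is the absolute continuity $\P_0\circ\tau_{k_n}^{-1}\ll\P_0$ together with $\Lambda_{k_n}\to 1$ in $L^1$ that allows one to push a small-probability event through the shift $\tau_{k_n}$ \emph{uniformly in $n$}, thereby upgrading the result to an arbitrary, possibly very irregular, $\mathcal F_T$-measurable $F$. One must also be slightly careful with the reduction to $h=0$ (via $\tau_a\circ\tau_b=\tau_{a+b}$ and Lemma~\ref{lemma:UstunelZakai}), so that the Cameron--Martin density only ever gets evaluated at the small parameter $k_n$.
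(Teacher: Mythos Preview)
The paper does not supply its own proof of this lemma: it is quoted directly from \cite[Lemma~B.2.1]{ustunelzakai} and used as a black box, so there is nothing in the paper to compare against. Your argument is correct and is in fact the standard one: reduce to $h=0$ via $\tau_a\circ\tau_b=\tau_{a+b}$, handle continuous bounded functionals by the embedding $H\hookrightarrow\Omega$, and transfer the approximation error through the shift using Cameron--Martin and $\Lambda_{k_n}\to 1$ in $L^1$.

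Two very minor remarks. First, your appeal to ``Scheff\'e's lemma (valid also under convergence in probability)'' is fine but deserves a word of justification, since the textbook statement assumes a.e.\ convergence; the cleanest route here is to note that $\E[\Lambda_{k_n}^2]=e^{\|k_n\|_H^2}$ stays bounded, whence $(\Lambda_{k_n})_n$ is uniformly integrable and convergence in probability upgrades to $L^1$ directly. Second, the truncation/density step is a little compressed: you are really choosing $\Phi\in C_b(\Omega)$ close in $L^1$ to the bounded $G\mathbf 1_{\{|G|\le M\}}$ and then combining with $\P_0(|G|>M)$ small to get $\P_0(|G-\Phi|>\varepsilon/3)<\delta$; this is exactly what you indicate, but spelling it out avoids any ambiguity about whether $G$ itself lies in $L^1$.
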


One of the main technique when working with shifts on the path space is the famous Cameron-Martin formula.
\begin{Proposition}\label{prop.cam}
$($Cameron-Martin Formula, see e.g. \cite[Appendix B.1]{ustunelzakai}$)$ Let $F$ be a $\mathcal F_T$-measurable random variable and let $h$ be in $H$. Then, when both sides are well-defined
$$ \E[F\tr{h}] =\E\left[ F \exp\left(\int_0^T \dot{h}(s) \cdot dW_s -\frac12 \int_0^T \|\dot{h}(s)\|^2 ds\right)\right]. $$
\end{Proposition}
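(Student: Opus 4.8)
The plan is to establish the identity first on a class of random variables rich enough to determine probability measures on $(\Omega,\mathcal F_T)$, and then to propagate it to an arbitrary $\mathcal F_T$-measurable $F$ by a monotone class argument. The natural starting class is the family of exponential functionals $F=\exp(W(g))$ with $g\in H$, together with finite linear combinations thereof. Everything rests on one elementary identity: for every $g,h\in H$ one has, $\P_0$-a.s.,
$$ W(g)\tr{h} = W(g) + \langle g,h\rangle_H. $$
This is immediate when $\dot g$ is a step function, since then $W(g)$ is a genuine Riemann--Stieltjes integral against $W$ and $\int_0^T \dot g(s)\cdot d(W+h)(s) = W(g) + \int_0^T \dot g(s)\cdot \dot h(s)\,ds$; the general case follows by $L^2$-approximation of $\dot g$ together with Lemma \ref{lemma:UstunelZakai}, which guarantees that the shifted variable is well-defined independently of the chosen representative.

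Granting this, the first step reduces to a finite-dimensional Gaussian computation. For a cylindrical $F=f(W(h_1),\dots,W(h_n))$ with $f$ bounded continuous, the identity above gives $F\tr{h}=f\big(W(h_1)+\langle h_1,h\rangle_H,\dots,W(h_n)+\langle h_n,h\rangle_H\big)$, and since $(W(h_1),\dots,W(h_n))$ is a centred Gaussian vector under $\P_0$ with covariance $(\langle h_i,h_j\rangle_H)_{i,j}$, both sides of the claimed formula become explicit Gaussian integrals over $\R^n$ that coincide by completing the square. Equivalently, it suffices to check it for $F=\exp(W(g))$, $g=\sum_i a_i h_i$, where it reads $\exp(\tfrac12\|g\|_H^2+\langle g,h\rangle_H)$ on both sides once one uses $\E[\exp(W(g))]=\exp(\tfrac12\|g\|_H^2)$ and $\E[\exp(W(g)+W(h))]=\E[\exp(W(g+h))]$. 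An equally short route is to invoke Girsanov's theorem for the process $t\mapsto\exp(\int_0^t\dot h(s)\cdot dW_s-\tfrac12\int_0^t\|\dot h(s)\|^2 ds)$, which is a true martingale because $\int_0^T\|\dot h\|^2 ds<\infty$ is deterministic: it shows that $W$ under $\P_0$ has the same law as $W-h$ under the weighted measure, and substituting $F\tr{h}$ then yields the formula directly.

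For the extension, note that the weight $\exp(\int_0^T\dot h\cdot dW-\tfrac12\int_0^T\|\dot h\|^2 ds)$ is nonnegative with expectation $1$, so the right-hand side defines a probability measure $\Q$ on $\mathcal F_T$, while the left-hand side is the pushforward $\P_0\circ\tau_h^{-1}$. The first step shows $\Q$ and $\P_0\circ\tau_h^{-1}$ agree on the algebra generated by the cylindrical sets $\{(W(h_1),\dots,W(h_n))\in B\}$, which generates $\mathcal F_T$; by the monotone class theorem they agree on all of $\mathcal F_T$, i.e. the formula holds for $F=\mathbf 1_A$. Monotone convergence extends it to every nonnegative $\mathcal F_T$-measurable $F$ (with both sides possibly $+\infty$), and splitting $F=F^+-F^-$ gives the statement whenever the two sides are well-defined, i.e. when $\E\big[|F|\exp(\int_0^T\dot h\cdot dW-\tfrac12\int_0^T\|\dot h\|^2 ds)\big]<\infty$. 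The only genuinely delicate point is the bookkeeping around null sets: $W(g)$, and hence $F\tr{h}$, is defined only up to $\P_0$-negligible sets, so one must systematically appeal to Lemma \ref{lemma:UstunelZakai} (and to the fact that $\tau_h$ sends $\P_0$-null sets to $\P_0\circ\tau_h^{-1}$-null sets) to ensure every manipulation is representative-independent; this is routine but is where care is required.
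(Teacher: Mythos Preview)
Your argument is correct and follows one of the standard routes to the Cameron--Martin formula: verify it on cylindrical (or exponential) functionals via the elementary identity $W(g)\tr{h}=W(g)+\langle g,h\rangle_H$, observe that both sides define probability measures on $\mathcal F_T$, and extend by a monotone class argument. The alternative via Girsanov that you sketch is equally valid.

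Note, however, that the paper does not supply its own proof of this proposition: it is stated as a classical result with a reference to \cite[Appendix B.1]{ustunelzakai}, so there is no ``paper's proof'' against which to compare yours. Your write-up is a faithful rendering of the argument one finds in that reference or in any standard treatment of the Cameron--Martin theorem.
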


For further reference, we also emphasize that for any $h\in H$ and for any $p\geq 1$, the stochastic exponential $\mathcal E\left(\int_0^\cdot\dot{h}(s)\cdot dW_s\right):=\exp\left(\int_0^\cdot \dot{h}(s) dW_s -\frac12 \int_0^\cdot \|\dot{h}(s)\|^2 ds\right)$ verifies
\begin{equation}\label{exp.int}
\mathcal E\left(\int_0^\cdot\dot{h}(s)\cdot dW_s\right)\in\mathbb S^p, \quad \forall p\geq 1.
\end{equation}

\begin{Lemma}
\label{lemma:shift1}
Let $t$ in $[0,T]$ and let $F$ be a $\mathcal{F}_t$-measurable random variable. For any $h$ in $H$, it holds that 
$$ F\tr{h}= F\circ {\tau_{\widetilde{h^t}}}, \ \P_0-a.s,$$
where $$\widetilde{h^t}(s):=\left(\int_0^s \dot{ h}^1(u) \mathbf 1_{0\leq u\leq t}du, \dots, \int_0^s \dot{ h}^d(u) \mathbf 1_{0\leq u\leq t}du\right)^\top.$$
In particular, $F\tr{h}$ is $\mathcal{F}_t$-measurable. 
\end{Lemma}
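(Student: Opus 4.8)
The plan is to reduce the statement to the case of a cylindrical functional and then pass to the limit. For a cylindrical $F$, the Malliavin derivative is supported on $[0,t]$ because $F$ is $\mathcal F_t$-measurable, and one then recognizes the shift $\tau_h$ restricted to the increments on $[0,t]$; the general case follows by approximation using Lemma \ref{lemma:UstunelZakai} and Lemma \ref{lemma:ctsH}.

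First I would observe that it suffices to treat $F$ of the form \eqref{eq:cylindrical}, i.e. $F=f(W(h_1),\dots,W(h_n))$ with $f\in C^\infty_b(\real^n)$: indeed, since $F$ is $\mathcal F_t$-measurable, the standard $L^2$-density of cylindrical functionals based on increments of $W$ up to time $t$ gives a sequence $F^k\to F$ in $L^0$ (and in fact in $L^2$) with each $F^k$ cylindrical and $\mathcal F_t$-measurable, meaning each $F^k$ can be written as $f^k(W(h^k_1),\dots,W(h^k_{m_k}))$ with all the $h^k_j$ supported in $[0,t]$ (so that $h^k_j=\widetilde{(h^k_j)^t}$). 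Then Lemma \ref{lemma:ctsH} applied to $F-F^k$, or rather the fact that $\tau_h$ preserves $\P_0$-null sets up to the Cameron-Martin density of Proposition \ref{prop.cam}, lets me pass $F^k\tr h=F^k\circ\tau_{\widetilde{h^t}}$ to the limit to obtain the claimed identity for $F$; more carefully, since $F^k\to F$ in $L^2$, and the Cameron-Martin density is in every $L^p$ by \eqref{exp.int}, we get $F^k\tr h\to F\tr h$ and $F^k\circ\tau_{\widetilde{h^t}}\to F\circ\tau_{\widetilde{h^t}}$ in $L^1$, so the two limits agree $\P_0$-a.s.

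For the cylindrical case itself, the key computation is: for $h\in H$,
$$
(F\tr h)(\omega)=f\big(W(h_1)(\omega+h),\dots,W(h_n)(\omega+h)\big)
=f\big(W(h_1)(\omega)+\langle h_1,h\rangle_H,\dots\big),
$$
since $W(h_i)(\omega+h)=\int_0^T\dot h_i\cdot d(\omega+h)=W(h_i)(\omega)+\langle \dot h_i,\dot h\rangle_{\h}$. Now I claim that when $F$ is $\mathcal F_t$-measurable it loses no generality to take each $h_i$ supported on $[0,t]$, i.e. $h_i=\widetilde{h_i^t}$: for those, $\langle h_i,h\rangle_H=\langle\dot h_i,\dot h\rangle_{\h}=\int_0^t\dot h_i(s)\cdot\dot h(s)\,ds=\langle\dot h_i,\dot{\widetilde{h^t}}\rangle_{\h}=\langle h_i,\widetilde{h^t}\rangle_H$, because $\dot{\widetilde{h^t}}=\dot h\,\mathbf 1_{[0,t]}$. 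Hence $(F\tr h)(\omega)=f\big(W(h_1)(\omega)+\langle h_1,\widetilde{h^t}\rangle_H,\dots\big)=(F\circ\tau_{\widetilde{h^t}})(\omega)$, which is exactly the desired identity; and the right-hand side depends on $\omega$ only through the increments of $\omega$ on $[0,t]$, whence the $\mathcal F_t$-measurability of $F\tr h$. The reduction "$h_i$ may be taken supported on $[0,t]$" is itself justified because any $\mathcal F_t$-measurable cylindrical functional can be approximated in $\mathbb D^{1,p}$ by cylindrical functionals based on $W(g_j)$ with $g_j$ supported on $[0,t]$ (one may, e.g., replace $W(h_i)$ by $\E[W(h_i)\mid\mathcal F_t]=W(\widetilde{h_i^t})$ and use a further density argument in $C^\infty_b$), or alternatively one simply invokes Lemma \ref{lemma:UstunelZakai} together with the representation $F=f(W(\widetilde{h_1^t}),\dots,W(\widetilde{h_n^t}))$ $\P_0$-a.s. that holds for $\mathcal F_t$-measurable cylindrical $F$.

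The main obstacle I expect is the bookkeeping in this last reduction: making rigorous that an arbitrary $\mathcal F_t$-measurable random variable is, up to $\P_0$-null sets and $L^2$-limits, a functional of the Brownian increments on $[0,t]$ only, and that this representation is preserved under the shift $\tau_h$ in the almost-sure sense (so that $F\tr h$ is well-defined as an equivalence class and equals $F\circ\tau_{\widetilde{h^t}}$). This is precisely where Lemma \ref{lemma:UstunelZakai} is essential — it guarantees the shift is insensitive to the choice of version — and where one must be careful that, although $\tau_h$ moves $\omega$ by the full path $h$, only the restriction $\widetilde{h^t}$ of $h$ to $[0,t]$ affects an $\mathcal F_t$-measurable functional, because adding $h\mathbf 1_{(t,T]}$ to $\omega$ changes no increment on $[0,t]$. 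Everything else is a routine combination of the Cameron-Martin formula, the integrability \eqref{exp.int}, and continuity in probability from Lemma \ref{lemma:ctsH}.
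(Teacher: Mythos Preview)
Your approach is correct in spirit but considerably more involved than the paper's, and it carries a couple of loose ends. The paper bypasses all approximation: since $\P_0$ is Wiener measure, any $\mathcal F_t$-measurable $F$ admits an $\mathcal F_t^o$-measurable version, i.e.\ there is a measurable $\varphi:\Omega\to\real$ with $F=\varphi(W_{\cdot\wedge t})$, $\P_0$-a.s. One then invokes Lemma~\ref{lemma:UstunelZakai} once and computes directly:
$$
F\tr h(\omega)=\varphi(W_{\cdot\wedge t})\tr h(\omega)=\varphi\big(\omega(\cdot\wedge t)+h(\cdot\wedge t)\big)=F\circ\tau_{\widetilde{h^t}}(\omega),
$$
since $\widetilde{h^t}(s)=h(s\wedge t)$. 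No density argument, no Cameron--Martin, no integrability is needed.

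Your route---approximate by cylindrical functionals based on $[0,t]$-supported $h_i$, compute there, and pass to the limit via Cameron--Martin---does work, but note two points. First, the lemma assumes no integrability on $F$, so your $L^2$-approximation needs a preliminary truncation (e.g.\ replace $F$ by $(-N)\vee F\wedge N$ and let $N\to\infty$ in probability); you gloss over this. Second, your final paragraph's ``reduction'' is muddled: replacing $W(h_i)$ by $\E[W(h_i)\mid\mathcal F_t]=W(\widetilde{h_i^t})$ inside $f$ does \emph{not} in general yield the same random variable, even if $F=f(W(h_1),\dots,W(h_n))$ happens to be $\mathcal F_t$-measurable. The correct justification is simply the density statement you already used in your first paragraph---that cylindrical functionals built from $W(g)$ with $g$ supported on $[0,t]$ are dense in $L^2(\mathcal F_t)$---so the third paragraph is redundant. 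What the paper's argument buys is brevity and the removal of any integrability hypothesis; what yours buys is nothing extra here, though the technique of reducing to cylindrical functionals is of course useful elsewhere.
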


\begin{proof}
It is well-known that by definition of $\P_0$, any $\Fc_t$-measurable random variable admits a $\Fc_t^o$-measurable version. Therefore, there exists some measurable map $\varphi:\Omega \to \real$, such that 
$$F=\varphi(W_{\cdot\wedge t}),\ \mathbb P_0-a.s.$$
Hence, we deduce by Lemma \ref{lemma:UstunelZakai} that for $\P_0-a.e.$ $\omega\in\Omega$
$$F\tr{h}(\omega)=\varphi(W_{\cdot\wedge t}(\omega))\tr{h}=\varphi(W_{\cdot\wedge t}\tr{h}(\omega))=\varphi(\omega(\cdot\wedge t)+h(\cdot \wedge t))=F\circ \tau_{\widetilde{h^t}}(\omega).$$
\qed
\end{proof}

We conclude this section with the following lemma which might be known. However since we did not find it in the literature we provide a proof in order to make this paper self-contained.

\begin{Lemma}
\label{lemma:shift2}
Let $Z\in\H_{d}^2$ and $h$ in $H$. It holds that
$$ \int_0^T Z_s \cdot dW_s \tr{h} = \int_0^T Z_s\tr{h} \cdot dW_s+\int_0^T Z_s \tr{h} \cdot \dot{h}(s)ds, \ \P_0-a.s.$$
\end{Lemma}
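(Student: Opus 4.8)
\textbf{Proof plan for Lemma \ref{lemma:shift2}.}

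The plan is to establish the identity first for a dense, tractable subclass of integrands $Z$ — namely simple predictable processes of the form $Z_s = \sum_{k} G_k \1_{(t_k,t_{k+1}]}(s)$ with each $G_k$ a bounded $\mathcal F_{t_k}$-measurable random variable — and then pass to the limit in $\H_d^2$. For such a simple process the stochastic integral is just the finite sum $\sum_k G_k (W_{t_{k+1}} - W_{t_k})$, and applying the shift $\tau_h$ to this finite sum is elementary: by Lemma \ref{lemma:UstunelZakai} and Lemma \ref{lemma:shift1} one has $G_k\tr{h} = G_k \circ \tau_{\widetilde{h^{t_k}}}$ is again $\mathcal F_{t_k}$-measurable, while $(W_{t_{k+1}} - W_{t_k})\tr{h} = \omega(t_{k+1}) + h(t_{k+1}) - \omega(t_k) - h(t_k) = (W_{t_{k+1}} - W_{t_k}) + (h(t_{k+1}) - h(t_k))$, and $h(t_{k+1}) - h(t_k) = \int_{t_k}^{t_{k+1}} \dot h(s)\, ds$. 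Collecting terms gives exactly $\sum_k G_k\tr{h}(W_{t_{k+1}} - W_{t_k}) + \sum_k G_k\tr{h} \int_{t_k}^{t_{k+1}} \dot h(s)\, ds$, which is the claimed right-hand side for simple $Z$.

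Next I would handle the passage to the limit. Take $Z^n \to Z$ in $\H_d^2$ with each $Z^n$ simple. The left-hand side converges: $\int_0^T Z^n_s\cdot dW_s \to \int_0^T Z_s\cdot dW_s$ in $L^2(\P_0)$ by the Itô isometry, hence a subsequence converges $\P_0$-a.s.; then, since the map $X \mapsto X\tr{h}$ preserves a.s.\ equality (Lemma \ref{lemma:UstunelZakai}) and is continuous in probability (Lemma \ref{lemma:ctsH}), along a further subsequence $\int_0^T Z^n_s\cdot dW_s\tr{h} \to \int_0^T Z_s\cdot dW_s\tr{h}$ in probability. For the right-hand side I need to control $Z^n\tr{h}$. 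Here the Cameron-Martin formula (Proposition \ref{prop.cam}) is the key tool: for any nonnegative measurable functional $\Phi$,
$$ \E\big[\Phi(Z^n\tr{h} - Z\tr{h})\big] = \E\Big[\Phi(Z^n - Z)\, \mathcal E\Big(\int_0^\cdot \dot h(s)\cdot dW_s\Big)_T\Big], $$
and since the stochastic exponential lies in $\mathbb S^p$ for all $p$ by \eqref{exp.int}, one gets via Hölder's inequality that $Z^n\tr{h} \to Z\tr{h}$ in $\H_d^q$ for any $q < 2$ (losing a bit of integrability to absorb the exponential), which suffices. This controls the drift term $\int_0^T Z^n_s\tr{h}\cdot\dot h(s)\,ds \to \int_0^T Z_s\tr{h}\cdot\dot h(s)\,ds$ in $L^1$ (using Cauchy-Schwarz in $s$ and $\dot h \in \h$), and it also gives $Z^n\tr{h} \to Z\tr{h}$ in $\H_d^q$, so that $\int_0^T Z^n_s\tr{h}\cdot dW_s \to \int_0^T Z_s\tr{h}\cdot dW_s$ in probability by the stochastic-integral continuity (the Itô isometry together with a localization, or directly the $L^q$-continuity of the stochastic integral for $q$ close to $2$). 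Matching the limits of both sides along a common subsequence yields the identity $\P_0$-a.s.

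The main obstacle I anticipate is the integrability bookkeeping in the limiting argument: the shift $\tau_h$ introduces the stochastic exponential as a Radon-Nikodym density, which is only in $L^p$ for finite $p$, so one cannot directly transfer $L^2$ convergence of $Z^n - Z$ under $\P_0$ to $L^2$ convergence of $Z^n\tr{h} - Z\tr{h}$; one must work in $\H_d^q$ for $q<2$ and check that this weaker convergence is still enough to pass to the limit in the stochastic integral $\int_0^T Z^n_s\tr{h}\cdot dW_s$. This is handled by combining the Burkholder-Davis-Gundy inequality (valid for $q \geq 1$) with the Cameron-Martin change of measure, or alternatively by noting that under the shifted measure $\widetilde{\P} := \mathcal E(\cdot)_T\, \P_0$ the process $s\mapsto W_s - \int_0^s\dot h(u)\,du$ is a Brownian motion, so the stochastic integral $\int_0^T Z_s\tr h\cdot dW_s$ can be rewritten and its continuity in $Z$ read off under $\widetilde\P$. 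Everything else is routine.
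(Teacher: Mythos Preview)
Your proof follows essentially the same two-step approach as the paper: verify the identity on simple processes by direct computation, then pass to the limit in $\H_d^2$ using Cameron--Martin together with Cauchy--Schwarz and BDG to control the three error terms that arise. One small slip: the continuity in probability of $X \mapsto X\tr{h}$ is not the content of Lemma \ref{lemma:ctsH} (which varies $h$, not $X$); it follows instead from the absolute continuity in Proposition \ref{prop.cam}, which you already invoke. The paper avoids your subsequence bookkeeping altogether by bounding the $L^1$ norms of all three differences directly.
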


\begin{proof}
Let $\mathfrak{S}$ be the class of simple processes $X$ of the form
$$X_t:=(X_t^1,\ldots,X_t^d)^\top, \quad X_t^j=\sum_{i=0}^{n_j} \lambda_{i}^j\mathbf{1}_{(t_i^j,t_{i+1}^j]}(t),\; j\in \{1,\ldots,d\}$$
where for any $j\in \{ 1, \ldots,d\}$, $n_j\in\N^*$, $t_0^j=0<t_1^j<...<t_n^j=T$ and where for any $0\leq i\leq n_j$, $(\lambda_i^j)_{i=1,...n_j}$ are $\mathcal{F}_{t_i^j}$-measurable and in $L^2(\R)$.

\vspace{0.5em}
We start by proving the result for $Z$ in $\mathfrak{S}$ and then we prove the result for any element $Z$ in $\H_{ d}^2$ using a density argument.   
Let $Z\in \mathfrak{S}$ with the decomposition
$$Z_s= (Z_s^1,\ldots,Z_s^d)^\top, \quad Z_s^j:=\sum_{i=0}^{n_j} \lambda_i^j\mathbf{1}_{(t_i^j,t_{i+1}^j]}(s), \ s\in[0,T], \ j\in \{1,\ldots, d \}.$$ Then, for any $h\in H$ and for every $\omega\in \Omega$,
\begin{align*} 
\left(\int_0^T Z_s\cdot dW_s\circ \tau_h\right)(\omega)&= \left(\sum_{j=1}^d \sum_{i=0}^{n_j} \lambda_{i}^j(W_{t_{i+1}^j}^j-W_{t_i^j}^j)\right) \tr{h} (\omega)\\
&= \sum_{j=1}^d \sum_{i=0}^{n_j} \lambda_{i}^j(\omega+h)(W_{t_{i+1}^j}^j-W_{t_i^j}^j)(\omega+h)\\
& =\sum_{j=1}^d \sum_{i=0}^{n_j} \lambda_i^j \tr{h} (\omega) \left(\omega^j(t_{i+1}^j)-\omega(t_i^j)+h^j(t_{i+1}^j)-h^j(t_i^j)\right)\\
&=\int_0^T Z_s\circ \tau_h \cdot dW_s(\omega)+\int_0^T Z_s\tr{h}(\omega)\cdot dh_s,
\end{align*}
which gives the desired result since $h$ is absolutely continuous. We extend this result to processes $Z$ in $\H_{ d}^2$. Let $Z\in \H_{ d}^2$, then there exists a sequence $(Z^n)_{n\in \N}$ in $\mathfrak{S}$ which converges to $Z$ in $\H_{ d}^2$. Hence,
\begin{align*}
&\E\left[ \abs{\int_0^T Z_s \cdot dW_s \circ \tau_h-\int_0^T Z_s\circ \tau_h \cdot dW_s-\int_0^T Z_s\tr{h}\cdot dh_s}\right]\\
&\leq \E\left[ \abs{\int_0^T Z_s \cdot dW_s \circ \tau_h-\int_0^T Z^n_s \cdot dW_s \circ \tau_h}\right]+\E\left[ \abs{\int_0^T Z^n_s \circ \tau_h \cdot dW_s -\int_0^T Z_s \circ \tau_h \cdot dW_s}\right]\\
&\hspace{0.9em}+\E\left[ \abs{\int_0^T Z^n_s \circ \tau_h\cdot dh_s -\int_0^T Z_s \circ \tau_h \cdot dh_s}\right]\\
&\leq  \underbrace{\E\left[ \abs{\int_0^T (Z_s - Z^n_s) \cdot dW_s  }\circ \tau_h\right]}_{=:A^n}+\underbrace{\E\left[ \abs{\int_0^T (Z^n_s - Z_s)\circ \tau_h \cdot dW_s}\right]}_{=:B^n}\\
&\hspace{0.9em}+\underbrace{\E\left[ \abs{\int_0^T (Z^n_s - Z_s)\circ \tau_h \cdot dh_s}\right]}_{=:C^n}.
\end{align*}

Let us estimate these three terms. First, using Proposition \ref{prop.cam}, Cauchy-Schwarz Inequality, then Burkholder-Davis-Gundy Inequality, we have
\begin{align*}
A^n= \E\left[ \abs{\int_0^T(Z_s-Z_s^n) \cdot dW_s} e^{\int_0^T\dot{h}(s) \cdot dW_s-\frac12 \int_0^T \|\dot{h}(s)\|^2ds}\right]\leq &\ \E\left[\int_0^T \|Z_s-Z_s^n\|^2 ds\right]^{1/2}\\
&\times\E\left[\mathcal E\left(\int_0^\cdot\dot{h}(s) \cdot dW_s\right)_T^2\right]^{1/2}.
\end{align*}
By \reff{exp.int}, this clearly goes to $0$ as $n$ goes to infinity. Similarly, using Burkholder-Davis-Gundy Inequality, we have
$$ B^n\leq  \E\left[\left(\int_0^T \left\|(Z^n_s - Z_s)\circ \tau_h\right\|^2 ds\right)^\frac12\right]= \E\left[\left(\int_0^T \|Z^n_s - Z_s\|^2 ds\right)^\frac12 \circ \tau_h\right].$$ 
Therefore, we can use Proposition \ref{prop.cam} and Cauchy-Schwarz Inequality, to also deduce that $B^n\underset{n\to +\infty}{\to} 0$. Finally, we have
\begin{align*}
C^n=&\ \E\left[\mathcal E\left(\int_0^T\dot{h}(s) \cdot dW_s\right)\abs{\int_0^T (Z_s^n-Z_s) \cdot \dot{h}(s)ds}\right]\\
\leq&\ \E\left[\mathcal E\left(\int_0^T\dot{h}(s)\cdot dW_s\right)^2\right]^{1/2}\E\left[\left(\int_0^T\|Z_s^n-Z_s\| \|\dot{h}(s)\| ds\right)^2\right]^{1/2}\\
\leq &\ \E\left[\mathcal E\left(\int_0^T\dot{h}(s) \cdot dW_s\right)^2\right]^{1/2}\E\left[\int_0^T\|Z_s^n-Z_s\|^2ds\right]^{1/2}\left(\int_0^T\|\dot{h}(s)\|^2ds\right)^{1/2},
\end{align*}
which also goes to $0$ as $n$ goes to infinity. Therefore the proof is complete.
\qed
\end{proof}

\vspace{0.5em}
This result entails the following useful consequence. Let $t$ in $(0,T]$ and $h$ in $H$ such that $\dot{h}_s=(0,\ldots,0)$ for $s\geq t$. Then for any $Z$ in $\H_{ d}^2$, it holds that:
\begin{equation}
\label{eq:intshiftprev}
\int_t^T Z_s \cdot dW_s \tr{h} = \int_t^T Z_s \tr{h}\cdot  dW_s, \quad \P_0-a.s., 
\end{equation}
since $\int_t^T Z_s \tr{h} \cdot \dot{h}(s) ds=0$.

\section{A characterization of Malliavin differentiability}
\label{section:Malliavin}

Before going further, we would like to recall the main finding of \cite{Sugita}. Any Malliavin-Sobolev type space $\D^{1,p}$ as defined in Section \ref{sec:preli} (originally defined by Malliavin \cite{Malliavin} and Shigekawa \cite{Shige}) agrees with the Sobolev space (due to Stroock \cite{stroock} and Kusuoka \cite{Kusuoka}) $\tilde{\D}^{1,p}$ consisting in the set of  \textit{Ray Absolutely Continuous} (RAC) and \textit{Stochastically G\^ateaux Differentiable} (SGD) r.v. $F$ in $L^p(\real)$, where these notions are defined as follows:
\begin{itemize}
\item[(RAC)] For any $h$ in $H$, there exists a r.v. $\tilde{F}_h$ such that $\tilde{F}_h=F$, $\P_0-$a.s., and such that for any $\omega$ in $\Omega$, $t \in \real\longmapsto \tilde{F}_h(\omega+t h)$ is absolutely continuous, where $t h:=(t h^1,\ldots,t h^d)$.
\item[(SGD)] There exists $\mathcal{D}F$ in $L^p(H)$ such that for any $h$ in $H$,
\begin{equation}
\label{eq:cvproba}
\frac{F\tr{\varepsilon h}-F}{\varepsilon} \underset{\varepsilon \to 0}{\longrightarrow} \langle \mathcal{D}F, h\rangle_H,\ \text{in probability.}
\end{equation}
\end{itemize} 
In addition, for any $F$ in $\D^{1,p}$, $\nabla F=\mathcal{D}F$, $\P_0-$a.s.
Note that according to the statement of Step 1 in the proof of \cite[Theorem 3.1]{Sugita}, if $F$ is (RAC) and (SGD) then for any $h$ in $H$ and any $\varepsilon>0$ it holds that
$$\varepsilon^{-1}(\tilde{F}_h\tr{\varepsilon h} -\tilde{F}_h) =\varepsilon^{-1} \int_0^\varepsilon \langle \nabla F \tr{s h}, h\rangle_H ds, \ \P_0-a.s.$$
Furthermore, by Lemma \ref{lemma:UstunelZakai}, we have for any $\varepsilon$ there exists a set $A^\varepsilon$ such that $\P_0[A^\varepsilon]=0$ and $F\tr{\varepsilon h}=\tilde{F}_h \tr{\varepsilon h}$ and $F=\tilde F_h$ outside $A^\varepsilon$. Hence, for any $\varepsilon$ in $(0,1)$, the relation above rewrites as:
\begin{equation}
\label{eq:cons}
\varepsilon^{-1} (F\tr{\varepsilon h}-F) = \varepsilon^{-1} \int_0^\varepsilon \langle \nabla F \tr{s h}, h\rangle_H ds, \ \P_0-a.s.
\end{equation}  
\begin{Remark}
It has actually been proved by Janson \cite{janson} that \eqref{eq:cons} is equivalent to $($RAC$)$ and $($SGD$)$, for any $p>1$, see Lemma 15.89. Notice that \cite{janson} also obtained a similar characterization for $p=1$ $($see Lemma 15.71$)$. However, as stated in Remark 4 of \cite{Sugita}, the identification of the Kusuoka-Stroock and Shigekawa spaces when $p=1$ is still an open result, so that we never consider the case $p=1$ in this paper.
\end{Remark}

The main result of this section is the following theorem whose proof is postponed to the end of the section.
\begin{Theorem}
\label{th:newcarD}
Let $p>1$ and $F\in L^p(\real)$. The following properties are equivalent
\begin{itemize}
\item[$($i$)$] $F$ belongs to $\D^{1,p}$.
\item[$($ii$)$] There exists $\mathcal{D}F$ in $L^p(H)$ such that for any $h$ in $H$ and any $q\in [1,p)$
$$ \lim\limits_{\varepsilon\to 0} \E\left[ \abs{ \frac{F\circ \tau_{\varepsilon h}-F}{\varepsilon}-\langle \mathcal{D}F, h\rangle_H}^q\right]=0. $$
\item[$($iii$)$] There exists $\mathcal{D}F$ in $L^p(H)$ and there exists $q\in [1,p)$ such that for any $h$ in $H$
$$ \lim\limits_{\varepsilon\to 0} \E\left[ \abs{ \frac{F\circ \tau_{\varepsilon h}-F}{\varepsilon}-\langle \mathcal{D}F, h\rangle_H}^q\right]=0. $$
\item[$($iv$)$] There exists $\mathcal{D}F$ in $L^p(H)$ such that for any $h$ in $H$
$$ \lim\limits_{\varepsilon\to 0} \E\left[ \abs{ \frac{F\circ \tau_{\varepsilon h}-F}{\varepsilon}-\langle \mathcal{D}F, h\rangle_H}\right]=0. $$
\end{itemize}
In that case, $\mathcal D F= \nabla F$.
\end{Theorem}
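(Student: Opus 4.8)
The plan is to establish the four properties via the cycle $(i)\Rightarrow(ii)\Rightarrow(iii)\Rightarrow(iv)\Rightarrow(i)$. Two of these are immediate: $(ii)\Rightarrow(iii)$ because $[1,p)\neq\emptyset$ when $p>1$, and $(iii)\Rightarrow(iv)$ because $\No{X}_{L^1}\leq\No{X}_{L^q}$ for $q\geq 1$; in both cases the random variable $\mathcal{D}F$ is carried along unchanged. So the real work lies in $(i)\Rightarrow(ii)$ and in $(iv)\Rightarrow(i)$, and at the end one checks that whatever $\mathcal{D}F$ appears must equal $\nabla F$ (testing the defining limit against each element of an orthonormal basis of $H$ gives uniqueness of $\mathcal{D}F$, and in $(i)\Rightarrow(ii)$ one may take $\mathcal{D}F=\nabla F$).

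For $(i)\Rightarrow(ii)$, set $\mathcal{D}F:=\nabla F$. Since $F\in\D^{1,p}$ it is (RAC) and (SGD) by \cite{Sugita}, so \eqref{eq:cons} holds, which after rearranging reads
\begin{equation*}
\frac{F\tr{\varepsilon h}-F}{\varepsilon}-\langle\nabla F,h\rangle_H=\frac{1}{\varepsilon}\int_0^\varepsilon\langle\nabla F\tr{sh}-\nabla F,h\rangle_H\,ds,\quad\P_0\text{-a.s.}
\end{equation*}
Taking $L^q$-norms with $1\leq q<p$, Minkowski's integral inequality and Cauchy--Schwarz in $H$ bound the right-hand side by $\No{h}_H\sup_{0\leq s\leq\varepsilon}\No{\nabla F\tr{sh}-\nabla F}_{L^q(H)}$, so the statement reduces to the following fact: for every $G\in L^p(H)$ and every $1\leq q<p$, the map $s\longmapsto G\tr{sh}$ is continuous from $\R$ into $L^q(H)$ (in particular at $s=0$). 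I would prove this by combining convergence in probability — which reduces, via an orthonormal basis of $H$, to scalar functionals and Lemma \ref{lemma:ctsH}, and then extends to all of $L^p(H)$ by density of the finite sums $\sum_i G_i h_i$ with $G_i\in\mathcal{S}$ together with the Cameron--Martin formula — with uniform integrability of $\{\No{G\tr{sh}}_H^q:|s|\leq R\}$, which follows from Proposition \ref{prop.cam}: for $q<q'<p$,
\begin{equation*}
\E\left[\No{G\tr{sh}}_H^{q'}\right]=\E\left[\No{G}_H^{q'}\,\mathcal E\left(s\int_0^\cdot\dot h(u)\cdot dW_u\right)_T\right]\leq\No{G}_{L^p(H)}^{q'}\,\E\left[\mathcal E\left(s\int_0^\cdot\dot h(u)\cdot dW_u\right)_T^{\frac{p}{p-q'}}\right]^{\frac{p-q'}{p}},
\end{equation*}
the last factor being bounded for $|s|\leq R$ by \eqref{exp.int} (applied with $s\dot h$ in place of $\dot h$).

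For $(iv)\Rightarrow(i)$, the first step is to convert the G\^ateaux-type convergence into an integration-by-parts identity. Fix $h\in H$ and $G\in\mathcal{S}$; applying the Cameron--Martin formula to transfer the shift from $F$ onto $G$ and the density gives
\begin{equation*}
\E\left[\frac{F\tr{\varepsilon h}-F}{\varepsilon}\,G\right]=\E\left[F\,\frac{(G\tr{-\varepsilon h})\,\mathcal E\left(\varepsilon\int_0^\cdot\dot h(u)\cdot dW_u\right)_T-G}{\varepsilon}\right].
\end{equation*}
By $(iv)$ and the boundedness of $G$, the left-hand side tends to $\E[\langle\mathcal{D}F,h\rangle_H\,G]$. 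On the right-hand side, because $G$ is cylindrical the bracketed difference quotient converges pointwise, as $\varepsilon\to0$, to the $\varepsilon$-derivative at $0$ of $(G\tr{-\varepsilon h})\,\mathcal E(\varepsilon\int_0^\cdot\dot h(u)\cdot dW_u)_T$, namely $G\,W(h)-\langle\nabla G,h\rangle_H=\delta(Gh)$ by \eqref{eq:deltaprod}, and a mean-value estimate provides an $\varepsilon$-uniform bound by $\abs{F}\cdot C(1+\abs{W(h)})e^{\abs{W(h)}}$, which is integrable since $F\in L^p$ and $W(h)$ is Gaussian; dominated convergence then yields
\begin{equation*}
\E[\langle\mathcal{D}F,h\rangle_H\,G]=\E[F\,\delta(Gh)],\qquad\forall\,G\in\mathcal{S},\ \forall\,h\in H.
\end{equation*}
From this identity, $F\in\D^{1,p}$ with $\nabla F=\mathcal{D}F$ follows by a classical approximation argument: with $\mathcal{F}_n:=\sigma(W(e_1),\dots,W(e_n))$ for an orthonormal basis $(e_k)_k$ of $H$ and $F_n:=\E[F\,|\,\mathcal{F}_n]$, the identity restricted to $G$ measurable with respect to $W(e_1),\dots,W(e_n)$ is exactly the Gaussian integration-by-parts formula on $\R^n$, which forces $F_n\in\D^{1,p}$ with $\nabla F_n=\sum_{k\leq n}\E[\langle\mathcal{D}F,e_k\rangle_H\,|\,\mathcal{F}_n]\,e_k$; by conditional Jensen $\No{\nabla F_n}_{L^p(H)}\leq\No{\mathcal{D}F}_{L^p(H)}$, whereas $F_n\to F$ in $L^p$ by martingale convergence, so closability of $\nabla$ together with reflexivity of $L^p(H)$ (here $p>1$ is essential) give $F\in\D^{1,p}$, and identifying the limit of $(\nabla F_n)_n$ yields $\nabla F=\mathcal{D}F$ (see e.g. \cite{Nualartbook}).

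I expect two steps to be the main obstacles. The first is the $L^q$-continuity of the Cameron--Martin shift on $L^p(H)$ used in $(i)\Rightarrow(ii)$: it is not purely formal, since it requires upgrading the continuity-in-probability of Lemma \ref{lemma:ctsH} to an $L^q$-statement via the Cameron--Martin moment bound, which is what forces the loss of exponent $q<p$. The second is extracting the integration-by-parts identity in $(iv)\Rightarrow(i)$ from mere $L^1$-G\^ateaux differentiability — the key point being that the Cameron--Martin change of variables is precisely what makes the right-hand side differentiable in $\varepsilon$ — after which the passage to $\D^{1,p}$ is standard but genuinely uses reflexivity, i.e. $p>1$.
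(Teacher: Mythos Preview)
Your proposal is correct and the overall architecture---the cycle $(i)\Rightarrow(ii)\Rightarrow(iii)\Rightarrow(iv)\Rightarrow(i)$, with the two nontrivial implications resting respectively on \eqref{eq:cons} plus a Cameron--Martin uniform-integrability bound, and on an integration-by-parts identity obtained by shifting the translation onto the test function---is exactly the paper's. There are two organisational differences worth noting.

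For $(i)\Rightarrow(ii)$, the paper (Lemma~\ref{lemma:D12nec}) does not rearrange as you do: it bounds the $(q+\eta)$-th moment of the \emph{raw} difference quotient by $\varepsilon^{-1}\int_0^\varepsilon\E[|\langle\nabla F\tr{sh},h\rangle_H|^{q+\eta}]\,ds$ via Jensen, controls this uniformly by Cameron--Martin, and then invokes the (SGD) convergence in probability directly. This avoids the $H$-valued continuity-in-probability statement you need; your route is fine, but the paper's is slightly shorter since it never leaves scalar functionals.

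For $(iv)\Rightarrow(i)$, the derivation of $\E[F\,\delta(Gh)]=\E[G\,\langle\mathcal{D}F,h\rangle_H]$ is identical (the paper isolates your Cameron--Martin computation as Lemma~\ref{lemma:SugitabisStep1Step1}). The paper then closes by reducing from $G\in\mathcal{S}$ to $G\in\mathcal{P}$ (Lemma~\ref{lemma:Sugita}) and quoting \cite[Corollary~2.1]{Sugita}, whereas you run the finite-dimensional conditioning and closability argument directly. Your approach is more self-contained; the paper's is shorter because it outsources that last step to Sugita.
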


\begin{Remark}
The implication $(ii) \Rightarrow (i)$ when $q=p=2$ already appears in \cite{bog} $($see 8.11.3$)$. This is of course contained in our result.
\end{Remark}
We now give the following lemma which characterizes the Malliavin derivative using the duality formula involving the Skorohod operator (also called divergence operator). 

\begin{Lemma}
\label{lemma:Sugita}
Let $\varepsilon>0$ and $1<p<+\infty$. Suppose that $F\in L^{1+\varepsilon}(\R)$ and assume that there exists $\mathcal{D}F$ in $L^p(H)$ such that:
$$ \E\left[ F\delta(G h)\right]=\E\left[ G \langle \mathcal{D}F,h\rangle_H\right],$$
for every $G\in \mathcal S$ and $h\in H$. Then, it holds that $F\in \D^{1,p}$, and $\mathcal{D}F=\nabla F, \; \P_0-a.s.$
\end{Lemma}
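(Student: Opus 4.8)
The plan is to approximate $F$ by its conditional expectations onto the finite-dimensional $\sigma$-algebras generated by a basis of the Cameron-Martin space, to identify explicitly the Malliavin derivative of each such conditional expectation in terms of $\mathcal{D}F$, and then to pass to the limit in $\D^{1,p}$. First I would fix a complete orthonormal system $(e_j)_{j\ge 1}$ of $H$, so that $(W(e_j))_{j\ge 1}$ is a sequence of i.i.d.\ $\mathcal{N}(0,1)$ random variables and $\mathcal{G}_m:=\sigma(W(e_1),\dots,W(e_m))$ increases, up to $\P_0$-negligible sets, to $\mathcal{F}_T$ (each $W(h)$, hence each coordinate $W^i_t$, being an $L^2$-limit of $\mathcal{G}_m$-measurable variables). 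Set $F_m:=\E[F\,|\,\mathcal{G}_m]$; since $F\in L^{1+\varepsilon}(\R)\subset L^1(\R)$, martingale convergence gives $F_m\longrightarrow F$ both $\P_0$-a.s.\ and in $L^{1+\varepsilon}(\R)$.

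Next I would identify $F_m$. Write $F_m=\phi_m(W(e_1),\dots,W(e_m))$ for a Borel map $\phi_m\in L^{1+\varepsilon}(\R^m;\gamma_m)$, where $\gamma_m$ is the standard Gaussian measure on $\R^m$. Specialising the assumption to $G=\psi(W(e_1),\dots,W(e_m))\in\mathcal{S}$ with $\psi\in C^\infty_b(\R^m)$ and to $h=e_j$, $1\le j\le m$, using $\delta(Ge_j)=GW(e_j)-\partial_j\psi(W(e_1),\dots,W(e_m))$ from \eqref{eq:deltaprod} and taking conditional expectations given $\mathcal{G}_m$ (the divergence term being $\mathcal{G}_m$-measurable), one obtains
$$\E\Big[F_m\big(GW(e_j)-\partial_j\psi(W(e_1),\dots,W(e_m))\big)\Big]=\E\big[G\,\E[\langle \mathcal{D}F,e_j\rangle_H\,|\,\mathcal{G}_m]\big].$$
Read, for fixed $j$ and all $\psi\in C^\infty_b(\R^m)$, as a finite-dimensional Gaussian integration by parts on $\R^m$, this says exactly that $\phi_m$ admits in the distributional sense the weak partial derivative $\partial_j\phi_m=g^j_m$, where $g^j_m\in L^p(\R^m;\gamma_m)$ is defined by $g^j_m(W(e_1),\dots,W(e_m))=\E[\langle\mathcal{D}F,e_j\rangle_H\,|\,\mathcal{G}_m]$. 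Standard facts on finite-dimensional Gaussian Sobolev spaces (a truncation-mollification argument identifying the class of $L^{1+\varepsilon}(\gamma_m)$ functions with weak gradient in $L^p(\gamma_m)$ with the closure $W^{1,p}(\R^m;\gamma_m)$ of $C^\infty_b(\R^m)$, together with the Gaussian $L^p$-Poincaré inequality, which in particular promotes $\phi_m$ to $L^p(\gamma_m)$) then give $\phi_m\in W^{1,p}(\R^m;\gamma_m)$, so that, by orthonormality of $(e_j)$, $F_m\in\D^{1,p}$ with
$$\nabla F_m=\sum_{j=1}^m \E[\langle\mathcal{D}F,e_j\rangle_H\,|\,\mathcal{G}_m]\,e_j=\E\big[\Pi_m\mathcal{D}F\,\big|\,\mathcal{G}_m\big],$$
$\Pi_m$ being the orthogonal projection of $H$ onto $\mathrm{span}(e_1,\dots,e_m)$ and the conditional expectation of the $H$-valued variable taken coordinatewise.

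It then remains to pass to the limit. From the last display, conditional Jensen's inequality and $\|\Pi_m\|\le 1$ give $\E[\|\nabla F_m\|_H^p]\le\E[\|\mathcal{D}F\|_H^p]$ uniformly in $m$; since $F_m\in\D^{1,p}$ and $p>1$, the Poincaré inequality on $\D^{1,p}$ (a consequence of Meyer's inequalities) together with $\E[F_m]=\E[F]$ give $\|F_m\|_{L^p(\R)}\le|\E[F]|+c_p\|\mathcal{D}F\|_{L^p(H)}$ uniformly in $m$. Hence, by Doob's $L^p$ maximal inequality ($p>1$), $\sup_m|F_m|\in L^p(\R)$, and the $\P_0$-a.s.\ convergence $F_m\to F$ improves by dominated convergence to convergence in $L^p(\R)$. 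On the other hand, $H$-valued martingale convergence gives $\E[\mathcal{D}F\,|\,\mathcal{G}_m]\to\mathcal{D}F$ in $L^p(H)$, while $\Pi_m\mathcal{D}F\to\mathcal{D}F$ in $L^p(H)$ by dominated convergence ($\Pi_m\to\mathrm{Id}_H$ strongly and $\|\Pi_m\mathcal{D}F\|_H\le\|\mathcal{D}F\|_H$); combining these through the contractivity of $\E[\,\cdot\,|\,\mathcal{G}_m]$ on $L^p(H)$ yields $\nabla F_m=\E[\Pi_m\mathcal{D}F\,|\,\mathcal{G}_m]\to\mathcal{D}F$ in $L^p(H)$. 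Therefore $(F_m)_{m\ge1}\subset\D^{1,p}$ is Cauchy for $\|\cdot\|_{1,p}$; its limit in $\D^{1,p}$ has $L^p(\R)$-component $F$ and $H$-derivative $\mathcal{D}F$, which proves $F\in\D^{1,p}$ and $\nabla F=\mathcal{D}F$, $\P_0$-a.s.

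I expect the main obstacle to be the finite-dimensional step: extracting from the duality identity alone that each $F_m$ is a Gaussian Sobolev function, and, most delicately, obtaining the $L^p$ bound on $F_m$ uniformly in $m$ while starting from only the weak integrability $F\in L^{1+\varepsilon}(\R)$. This is precisely where the dimension-free Poincaré/Meyer inequality enters, and where $p>1$ (used again in Doob's maximal inequality) is essential; the case $p=1$ would break down exactly here.
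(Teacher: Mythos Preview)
Your argument is correct, but the route is genuinely different from the paper's. The paper does not reprove the result from scratch: it observes that Sugita's Corollary~2.1 already gives the conclusion once the duality identity $\E[F\delta(Gh)]=\E[G\langle\mathcal{D}F,h\rangle_H]$ is known to hold for all $G$ in the \emph{polynomial} cylindrical class $\mathcal{P}$. The whole proof in the paper therefore consists in transferring the hypothesis from test functionals $G\in\mathcal{S}$ (smooth bounded cylindrical) to $G\in\mathcal{P}$, which is done by approximating a polynomial $G$ by elements $G^N\in\mathcal{S}$ in every $\D^{1,r}$ (their Lemma~\ref{lemma:tec1}) and passing to the limit termwise in $\E[F\delta(G^Nh)]=\E[G^N\langle\mathcal{D}F,h\rangle_H]$, using only $F\in L^{1+\varepsilon}$ and $\mathcal{D}F\in L^p(H)$ together with \eqref{eq:deltaprod}.

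By contrast, you give a direct, self-contained proof that does not invoke Sugita: you project $F$ onto finite-dimensional Gaussian $\sigma$-fields, read the duality identity as a finite-dimensional weak integration by parts to identify $\nabla F_m=\E[\Pi_m\mathcal{D}F\,|\,\mathcal{G}_m]$, and then close up in $\D^{1,p}$ via the Gaussian Poincar\'e/Meyer inequality and Doob's maximal inequality. This buys you an independent argument and, incidentally, an explicit explanation of how the weak integrability $F\in L^{1+\varepsilon}$ gets upgraded to $F\in L^p$ (through the uniform bound $\|F_m\|_{L^p}\le |\E F|+c_p\|\mathcal{D}F\|_{L^p(H)}$). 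The paper's approach is considerably shorter because the heavy lifting is outsourced to \cite{Sugita}; yours makes the mechanism transparent but relies on more infrastructure (finite-dimensional Gaussian Sobolev identification, Meyer's inequalities). Both are valid; the step you flagged as delicate---the finite-dimensional promotion of $\phi_m$ from $L^{1+\varepsilon}(\gamma_m)$ to $L^p(\gamma_m)$ via Poincar\'e---is indeed the only place requiring some care, and can be made rigorous by Ornstein--Uhlenbeck regularisation ($P_t\phi_m$ is smooth with $\|\nabla P_t\phi_m\|_{L^p}\le\|\nabla\phi_m\|_{L^p}$, apply Poincar\'e, then Fatou as $t\downarrow 0$).
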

\begin{proof}
We know (see e.g. \cite[Corollary 2.1]{Sugita}) that the result is true if $\mathcal{S}$ is replaced by $\mathcal{P}$. Let $G$ be in $\mathcal{P}$. By Lemma \ref{lemma:tec1} there exists $(G^N)$ in $\mathcal{S}$ such that $G^N$ approximates $G$ in $\mathbb{D}^{1,p}$. Let $h$ in $H$. For any $N\geq 1$, we have 
\begin{align*}
\E[F \delta(Gh)] &= \E[F (G W(h)-\langle \nabla G,h\rangle_H)]\\ 
&= \E[F (G^N W(h)-\langle \nabla G^N,h\rangle_H)] - \E[(G^N-G) F W(h)- F \langle \nabla (G^N-G),h\rangle_H]\\
&= \E[F \delta(G^Nh)] - \E[(G^N-G) F W(h)- F \langle \nabla (G^N-G),h\rangle_H]\\
&= \E[G^N \langle \mathcal{D} F, h\rangle_H] - \E[(G^N-G) F W(h)- F \langle \nabla (G^N-G),h\rangle_H].
\end{align*}
Furthermore, by Lemma \ref{lemma:tec1}
$$ |\E[(G^N-G) F W(h)]| \leq \E[|F W(h)|^p]^{1/p} \E[|G^N-G|^{\bar{p}}]^{1/\bar{p}} \underset{N\to +\infty}{\longrightarrow} 0,$$
with $1<p<1+\varepsilon$ and where $\bar p$ is the conjugate of $p$, and
$$ |\E[F \langle \nabla (G^N-G),h\rangle_H]|\ \leq \E[|F|^p]^{1/p} \E[\|\nabla (G^N-G)\|_H^{\bar p}]^{1/\bar p} \|h\|_H \underset{N\to +\infty}{\longrightarrow} 0,$$
by Lemma \ref{lemma:tec1} again. Hence,
\begin{align*}
\E[F\delta(Gh)] &= \lim_{N\to+\infty}  \E[G^N \langle \mathcal{D} F, h\rangle_H]\\
&=\E[G \langle \mathcal{D} F, h\rangle_H] + \lim_{N\to+\infty}  \E[(G^N-G) \langle \mathcal{D} F, h\rangle_H],
\end{align*}
and
$$ \lim_{N\to+\infty} |\E[(G^N-G) \langle \mathcal{D} F, h\rangle_H]| \leq \lim_{N\to+\infty} \E[|G^N-G|^p]^{1/p} \E[\|\mathcal{D}F\|_H^{\bar p}]^{1/\bar p} \|h\|_H \underset{N\to +\infty}{\longrightarrow} 0.$$
Thus we have proved that for any $G$ in $\mathcal{P}$ and for any $h$ in $H$,
$$\E[F\delta(Gh)] = \E[G \langle \mathcal{D} F, h\rangle_H],$$
which gives the result by \cite[Corollary 2.1]{Sugita}.
\qed
\end{proof}

We now prove the following lemma for the Malliavin differentiability of a given random variable.

\begin{Lemma}
\label{lemma:D12nec}
Let $p>1$. Let $F$ be in $\D^{1,p}$. Then, for any $q$ in $[1,p)$ and for any $h$ in $H$, 
$$ \frac{F\tr{\varepsilon h}-F}{\varepsilon}\underset{\varepsilon \to 0}{\longrightarrow} \langle \nabla F, h\rangle_H \textrm{ in } L^q(\real).$$
\end{Lemma}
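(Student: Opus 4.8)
The goal is to upgrade the convergence in probability guaranteed by the (SGD) property (which holds for $F\in\D^{1,p}$ since $\D^{1,p}=\tilde\D^{1,p}$) to convergence in $L^q(\real)$ for every $q<p$. The natural tool is the exact identity \eqref{eq:cons}, namely
$$\varepsilon^{-1}\brak{F\tr{\varepsilon h}-F}=\varepsilon^{-1}\int_0^\varepsilon \langle \nabla F\tr{sh},h\rangle_H\,ds,\quad \P_0\text{-a.s.},$$
valid for $\varepsilon\in(0,1)$. The first step is to observe that $\langle\nabla F\tr{sh},h\rangle_H$ tends to $\langle\nabla F,h\rangle_H$ in probability as $s\to0$: this follows from Lemma \ref{lemma:ctsH} applied to the $\Fc_T$-measurable random variable $\langle\nabla F,h\rangle_H$ (continuity of $s\mapsto G\tr{sh}$ in probability). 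Hence $\varepsilon^{-1}\int_0^\varepsilon\langle\nabla F\tr{sh},h\rangle_H\,ds\to\langle\nabla F,h\rangle_H$ in probability, which already recovers (SGD); the real work is the $L^q$ statement, which requires a uniform integrability argument.

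The key step is therefore to bound the $L^p$ norm of the difference quotient uniformly in $\varepsilon\in(0,1)$. Applying Jensen's inequality (in the form $|\varepsilon^{-1}\int_0^\varepsilon g(s)\,ds|^p\le \varepsilon^{-1}\int_0^\varepsilon|g(s)|^p\,ds$) to the identity above and taking expectations,
$$\E\left[\abs{\frac{F\tr{\varepsilon h}-F}{\varepsilon}}^p\right]\le \frac1\varepsilon\int_0^\varepsilon \E\left[\abs{\langle\nabla F\tr{sh},h\rangle_H}^p\right]ds\le \No{h}_H^p\,\frac1\varepsilon\int_0^\varepsilon \E\left[\No{\nabla F\tr{sh}}_H^p\right]ds.$$
Now for each fixed $s\in(0,1)$, the Cameron--Martin formula (Proposition \ref{prop.cam}) applied to $\No{\nabla F}_H^p$ gives
$$\E\left[\No{\nabla F\tr{sh}}_H^p\right]=\E\left[\No{\nabla F}_H^p\,\mathcal E\Big(\int_0^\cdot s\dot h(u)\cdot dW_u\Big)_T\right],$$
and since $F\in\D^{1,p}\subset\D^{1,p'}$ for any $p'\in(p,\,\text{anything})$... more carefully: pick $p_1\in(q,p)$ arbitrary and show uniform boundedness of the $L^{p_1}$ norms. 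By Hölder with exponents $p/p_1$ and its conjugate, the right-hand side is bounded by $\No{\nabla F}_{L^p(H)}^{p_1}$ times $\big(\E[\mathcal E(\int_0^\cdot s\dot h\cdot dW)_T^{r}]\big)^{1/r}$ for the appropriate $r$, and this Doléans-Dade exponential has moments of all orders bounded uniformly for $s\in(0,1)$ (the exponent involves $s^2\No{\dot h}^2$, increasing in $s$, so bounded by the $s=1$ value), using \eqref{exp.int} or a direct computation. Averaging over $s\in(0,\varepsilon)$ preserves the bound, so $\sup_{\varepsilon\in(0,1)}\E[|\varepsilon^{-1}(F\tr{\varepsilon h}-F)|^{p_1}]<\infty$.

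The conclusion then follows from a standard uniform-integrability argument: a family bounded in $L^{p_1}$ is uniformly integrable for the $L^q$ topology whenever $q<p_1$, so combined with the convergence in probability established in the first step, we get convergence in $L^q(\real)$ for every $q\in[1,p)$ (choosing $p_1\in(q,p)$). The main obstacle is the uniform $L^{p_1}$ bound: one must carefully track that the stochastic exponential arising from the Cameron--Martin change of measure has moments bounded uniformly in the scaling parameter $s\in(0,\varepsilon)\subset(0,1)$, which is where the restriction $\varepsilon<1$ and the monotonicity of $s\mapsto s^2$ are used; the rest is routine Jensen/Hölder bookkeeping.
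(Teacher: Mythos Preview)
Your proposal is correct and follows essentially the same route as the paper: use the integral identity \eqref{eq:cons}, apply Jensen to pass the exponent inside the $ds$-average, invoke Cameron--Martin to remove the shift, and then H\"older (with exponents $p/p_1$ and its conjugate, equivalently the paper's $p/(q+\eta)$) to get a uniform $L^{p_1}$ bound on the difference quotients for some $p_1\in(q,p)$, which together with the convergence in probability from (SGD) yields $L^q$ convergence by de~la~Vall\'ee~Poussin. The only cosmetic difference is that the paper fixes $\eta>0$ with $q+\eta<p$ from the outset, whereas you first write the bound with exponent $p$ before correcting to $p_1<p$; your self-correction is exactly the right move, since H\"older against the Dol\'eans--Dade exponential requires a strict gap $p_1<p$.
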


\begin{proof}
Fix $q$ in $[1,p)$, $h$ in $H$ and $\eta>0$ such that $q+\eta<p$. We know from \cite[Theorem 3.1]{Sugita} that since $F$ is in $\D^{1,p}$, $F$ is (SGD), 
(RAC), and Relation \eqref{eq:cons} holds true.
We thus have using Jensen's Inequality
\begin{align*}
\E\left[\left|\varepsilon^{-1} (F \tr{\varepsilon h}-F) \right|^{q+\eta}\right]&=\E\left[\varepsilon^{-(q+\eta)} \left| \int_0^\varepsilon \langle \nabla F \tr{s h}, h\rangle_H ds \right|^{q+\eta}\right]\\
&\leq\varepsilon^{-1} \E\left[\int_0^\varepsilon \left|\langle \nabla F \tr{s h}, h\rangle_H \right|^{q+\eta} ds\right]\\
&=\varepsilon^{-1} \int_0^\varepsilon \E\left[\left|\langle \nabla F, h\rangle_H \right|^{q+\eta} \tr{s h}\right] ds\\
&=\varepsilon^{-1} \int_0^\varepsilon \E\left[\left|\langle \nabla F, h\rangle_H \right|^{q+\eta} \mathcal{E}\left(s \int_0^T \dot{h}_r \cdot dW_r\right) \right]ds\\
&\leq \E\left[\left|\langle \nabla F, h\rangle_H \right|^p\right]^{\frac{q+\eta}{p}} \sup_{t\in (0,1)} \E\left[\left|\mathcal{E}\left(t \int_0^T \dot{h}_r \cdot dW_r\right)\right|^{\frac{p}{p-q-\eta}} \right]^{\frac{p-q-\eta}{p}}\\
&<+\infty.
\end{align*}
Hence by de La Vall\'ee Poussin Criterion, we deduce that the family of random variables $\left(\left|\varepsilon^{-1}(F\tr{\varepsilon h} -F)\right|^q\right)_{\varepsilon \in (0,1)}$ is uniformly integrable
which together with the convergence in probability \eqref{eq:cvproba} gives the result. 
\qed
\end{proof}

\begin{Remark}
Note that the conclusion of the previous Lemma may fail for $q=p$\footnote{After the first version of this paper, a counter example has been given in \cite{IMPR}.}
\end{Remark}

We can now proceed with the proof of Theorem \ref{th:newcarD}.

\vspace{0.5em}
\textbf{Proof of Theorem \ref{th:newcarD}.} From Lemma \ref{lemma:D12nec} we have $(i)\Rightarrow (ii)$ and of course $(ii) \Rightarrow (iii)\Rightarrow (iv)$. We turn to $(iv)\Rightarrow (i)$. Let $F$ be such that there exists $\mathcal{D}F$ in $L^p(H)$ such that
$$ \lim\limits_{\varepsilon \to 0} \E\left[ \abs{ \frac{F\circ \tau_{\varepsilon h}-F}{\varepsilon}-\langle \mathcal{D}F, h\rangle_H}\right]=0. $$
The proof consists in applying Lemma \ref{lemma:Sugita} by proving the duality relationship
\begin{equation}
\label{eq:proofdual}
\E[F \delta(G h)] = \E\left[G \langle \mathcal{D}F, h\rangle_H \right], \ G\in \mathcal S, \;h \in H.
\end{equation}
By Lemma \ref{lemma:SugitabisStep1Step1} (in the Appendix) with $\varepsilon=0$,
\begin{align}
\label{eq:temp1}
\E\left[ F \delta(G h) \right]= &\ \frac{d}{d\varepsilon }\E\left[ F\tr{\varepsilon h} \, G\right]_{\vert \varepsilon =0}\nonumber\\
=&\ \lim\limits_{\eta \to 0}\frac1\eta \E\left[(F\tr{\eta h}-F) G \right]\nonumber\\
=&\ \lim\limits_{\eta \to 0}\E\left[\left(\frac{F\circ \tau_{\eta h}-F}{\eta}- \langle \mathcal{D}F,h \rangle_H \right) G \right]\nonumber \\
&+\mathbb{E}\left[ \langle \mathcal{D}F,h \rangle_H G\right]\nonumber\\
=&\ \mathbb{E}\left[ \langle \mathcal{D}F,h \rangle_H G\right],
\end{align}
where the proof that the first term on the right-hand side goes to $0$ is reported below.

\vspace{0.5em}
Note that $\E[|\langle \mathcal{D}F,h \rangle_H G|]<+\infty$ since $G$ is bounded and $\mathcal{D}F$ belongs to $L^p(H)$.
The Equality \eqref{eq:temp1} is justified by H\"older's inequality  since
\begin{align*}
&\E\left[\abs{\left(\frac{F\circ \tau_{\eta h}-F}{\eta}-\langle \mathcal{D}F,h \rangle_H\right) G} \right]\\
&\leq \|G\|_\infty \E\left[\abs{\frac{F\circ \tau_{\eta h}-F}{\eta}-\langle \mathcal{D}F,h \rangle_H} \right]\underset{\varepsilon \to 0}{\longrightarrow} 0.
\end{align*}
\qed

\begin{Corollary}
Let $F$ be in $\mathbb D^{1,p}$. For any $\varepsilon>0$ and any $h$ in $H$, $F\tr{\varepsilon h}$ belongs to $\D^{1,p}$ and $\nabla (F\tr{\varepsilon h})=(\nabla F)\tr{\varepsilon h}$. 
\end{Corollary}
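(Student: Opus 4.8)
First I would reduce the claim to the characterization of $\D^{1,p}$ in Theorem \ref{th:newcarD}: it suffices to prove that $G:=F\tr{\varepsilon h}$ satisfies property $(iv)$ there with candidate derivative $\mathcal{D}G:=(\nabla F)\tr{\varepsilon h}$, since Theorem \ref{th:newcarD} then gives both $G\in\D^{1,p}$ and $\nabla G=\mathcal{D}G=(\nabla F)\tr{\varepsilon h}$.

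The first point to establish is the two $L^p$-memberships. Applying the Cameron--Martin formula (Proposition \ref{prop.cam}) to the non-negative random variables $\abs{F}^p$ and $\|\nabla F\|_H^p$ (for the latter working with a jointly measurable version of $\nabla F$, so that $\|(\nabla F)\tr{\varepsilon h}\|_H=\|\nabla F\|_H\tr{\varepsilon h}$ $\P_0$-a.s.),
$$\E\left[\abs{F\tr{\varepsilon h}}^p\right]=\E\left[\abs{F}^p\,\mathcal E\left(\varepsilon\int_0^\cdot \dot h(s)\cdot dW_s\right)_T\right],\qquad \E\left[\left\|(\nabla F)\tr{\varepsilon h}\right\|_H^p\right]=\E\left[\|\nabla F\|_H^p\,\mathcal E\left(\varepsilon\int_0^\cdot \dot h(s)\cdot dW_s\right)_T\right].$$
Since $F\in\D^{1,p}$ gives $\abs{F}^p,\|\nabla F\|_H^p\in L^1(\R)$ and the stochastic exponential lies in $\mathbb S^r$ for all $r\geq1$ by \eqref{exp.int}, I would conclude $G\in L^p(\R)$ and $\mathcal{D}G\in L^p(H)$. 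I expect this transfer of integrability across the shift to be the main obstacle: all the remaining manipulations are soft, whereas here one genuinely relies on the Cameron--Martin density together with \eqref{exp.int} (and, to get the statement for a merely $L^p$ random variable, on some care about the resulting integrand).

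Finally I would compute the difference quotient of $G$ in a direction $k\in H$. Since $\tau_a\circ\tau_b=\tau_{a+b}$ on $\Omega$, for $\eta>0$ one has $G\tr{\eta k}=F\circ\tau_{\varepsilon h+\eta k}=\big(F\tr{\eta k}\big)\tr{\varepsilon h}$, so, using that the shift commutes with finite linear combinations of $\mathcal F_T$-measurable random variables (a consequence of Lemma \ref{lemma:UstunelZakai}),
$$\frac{G\tr{\eta k}-G}{\eta}=\left(\frac{F\tr{\eta k}-F}{\eta}\right)\tr{\varepsilon h},\qquad \P_0-a.s.$$
By Lemma \ref{lemma:D12nec}, $\eta^{-1}\big(F\tr{\eta k}-F\big)\to\langle\nabla F,k\rangle_H$ in $L^q(\R)$ for every $q\in[1,p)$; fixing $q\in(1,p)$ with conjugate exponent $q'$, the Cameron--Martin formula and Hölder's inequality (with $\mathcal E\left(\varepsilon\int_0^\cdot\dot h(s)\cdot dW_s\right)_T\in L^{q'}(\R)$ by \eqref{exp.int}) give
$$\E\left[\abs{\frac{G\tr{\eta k}-G}{\eta}-\langle\nabla F,k\rangle_H\tr{\varepsilon h}}\right]=\E\left[\abs{\frac{F\tr{\eta k}-F}{\eta}-\langle\nabla F,k\rangle_H}\,\mathcal E\left(\varepsilon\int_0^\cdot\dot h(s)\cdot dW_s\right)_T\right]\underset{\eta\to0}{\longrightarrow}0.$$
Since $\langle\cdot,k\rangle_H$ is a fixed continuous linear functional on $H$, one has $\langle\nabla F,k\rangle_H\tr{\varepsilon h}=\langle(\nabla F)\tr{\varepsilon h},k\rangle_H$ $\P_0$-a.s. (again via Lemma \ref{lemma:UstunelZakai}), hence $\eta^{-1}\big(G\tr{\eta k}-G\big)\to\langle\mathcal{D}G,k\rangle_H$ in $L^1(\R)$ for every $k\in H$, which is exactly property $(iv)$ of Theorem \ref{th:newcarD} for $G$ and $\mathcal{D}G$.
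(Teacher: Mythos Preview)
Your overall route differs from the paper's: you invoke Theorem~\ref{th:newcarD}$(iv)$ directly, transferring the $L^q$-convergence of Lemma~\ref{lemma:D12nec} across the shift $\tau_{\varepsilon h}$ via Cameron--Martin and H\"older, and you test against an arbitrary direction $k\in H$. The paper instead establishes the duality relation $\E\big[F\tr{\varepsilon h}\,\delta(Gh)\big]=\E\big[G\,\langle(\nabla F)\tr{\varepsilon h},h\rangle_H\big]$ through Lemma~\ref{lemma:SugitabisStep1Step1} and then appeals to Lemma~\ref{lemma:Sugita}. Your argument is the more direct of the two.

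There is, however, a genuine gap precisely at the step you flag as the main obstacle. From $|F|^p\in L^1(\R)$ and $\mathcal E\big(\varepsilon\int_0^\cdot\dot h\cdot dW\big)_T\in\bigcap_{r\geq1}L^r(\R)$ one \emph{cannot} conclude that their product lies in $L^1(\R)$: on $(0,1)$ with Lebesgue measure, $A(x)=\big(x\log^2(1/x)\big)^{-1}\in L^1$ and $B(x)=\log(1/x)\in\bigcap_{r\geq1} L^r$, yet $AB\notin L^1$. In fact the conclusion $F\tr{\varepsilon h}\in L^p$ can genuinely fail for $F\in\D^{1,p}$. With $d=T=1$ and $p=2$, take $F=g(W_1)$ where $g$ is smooth and $g(x)=e^{x^2/4}/(1+x^2)$ for $x$ large; one checks $g,g'\in L^2(\gamma)$ so $F\in\D^{1,2}$, while $\E\big[g(W_1+a)^2\big]=c\int e^{ay}(1+y^2)^{-2}\,dy=+\infty$ for every $a>0$. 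The same obstruction blocks $(\nabla F)\tr{\varepsilon h}\in L^p(H)$. What your argument (and the paper's, which invokes Lemma~\ref{lemma:Sugita} without checking its hypothesis $\mathcal DF\in L^p(H)$) does deliver is the weaker statement $F\tr{\varepsilon h}\in\D^{1,q}$ for every $q<p$, together with $\nabla(F\tr{\varepsilon h})=(\nabla F)\tr{\varepsilon h}$: for such $q$, H\"older with exponents $p/q$ and $p/(p-q)$ after Cameron--Martin supplies the required integrability.
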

\begin{proof}
Let $F$ be in $\mathbb D^{1,p}$. Using Theorem \ref{th:newcarD}, we know that for any $h$ in $H$ and any $q\in [1,p)$
$$ \lim\limits_{\varepsilon\to 0} \E\left[ \abs{ \frac{F\circ \tau_{\varepsilon h}-F}{\varepsilon}-\langle \nabla F, h\rangle_H}^q\right]=0. $$
By Lemma \ref{lemma:SugitabisStep1Step1} (in the Appendix) it holds that
\begin{align}
\label{eq:temp1}
\E\left[ F\tr{\varepsilon h} \; \delta(G h) \right]= &\ \frac{d}{d\varepsilon }\E\left[ F\tr{\varepsilon h} \, G\right]\nonumber\\
=&\ \lim\limits_{\eta \to 0}\frac1\eta \E\left[(F\tr{(\varepsilon +\eta)h}-F\circ \tau_{\varepsilon h}) G \right]\nonumber\\
=&\ \lim\limits_{\eta \to 0}\E\left[\left(\frac{F\circ \tau_{(\varepsilon +\eta)h}-F\circ \tau_{\varepsilon h}}{\eta}- \langle (\nabla F)\circ \tau_{\varepsilon h},h \rangle_H \right) G \right]\nonumber \\
&+\mathbb{E}\left[ \langle (\nabla F)\circ \tau_{\varepsilon h},h \rangle_H G\right]\nonumber\\
=&\ \mathbb{E}\left[ \langle (\nabla F)\circ \tau_{\varepsilon h},h \rangle_H G\right],
\end{align}
where the proof that the first term on the right-hand side goes to $0$ is reported below.

\vspace{0.5em}
Note that $\E[|\langle (\nabla F)\circ \tau_{\varepsilon h},h \rangle_H G|]<+\infty$ since $\langle (\nabla F)\circ \tau_{\varepsilon h},h \rangle_H=\langle \nabla F,h \rangle_H \circ \tau_{\varepsilon h}$, $\P_0-$a.s., $G$ belongs to all the spaces $L^r(\R)$ for $r\geq 1$ and 
$$ \E[|\langle \nabla F,h \rangle_H|^p ] \leq \|h\|_H^{p} \E\left[\|\nabla F\|_H^p\right]<+\infty.$$
The Equality \eqref{eq:temp1} is justified by H\"older's inequality  since
\begin{align*}
&\E\left[\abs{\left(\frac{F\circ \tau_{(\varepsilon +\eta)h}-F\circ \tau_{\varepsilon h}}{\eta}-\langle \nabla F\circ \tau_{\varepsilon h},h \rangle_H\right) G} \right]\\
&\leq \E\left[\abs{\frac{F\circ \tau_{(\varepsilon +\eta)h}-F\circ \tau_{\varepsilon h}}{\eta}-\langle \nabla F\circ \tau_{\varepsilon h},h \rangle_H}^r\right]^{\frac1r} \E[|G|^{\bar{r}}]^{\frac{1}{\bar{r}}}\\
&= \E\left[\left|\frac{F\circ \tau_{\eta h}-F}{\eta}-\langle \nabla F,h \rangle_H \right|^r \circ \tau_{\varepsilon h} \right]^{\frac1r} \E[|G|^{\bar{r}}]^{\frac{1}{\bar{r}}}\\
&\leq \E\left[\left|\frac{F\circ \tau_{\eta h}-F}{\eta}-\langle \nabla F,h \rangle_H \right|^q\right]^{\frac1q} \E\left[\mathcal{E}\left(\varepsilon  \int_0^T \dot{h}(s)\cdot  dW_s\right)^{\bar{\alpha}} \right]^{\frac{1}{r\bar{\alpha}}} \E[|G|^{\bar{r}}]^{\frac{1}{\bar{r}}}
\end{align*}
where $1<r<q$ and $\alpha:=\frac{q}{r}$ and where $\bar{r}$ (resp. $\bar{\alpha}$) is the H\"older conjugate of $r$ (resp. $\alpha$). Consequently, $\E\left[ F\tr{\varepsilon h} \; \delta(G h) \right]=\mathbb{E}\left[ \langle \nabla F\circ \tau_{\varepsilon h},h \rangle_H G\right]$, and from Lemma \ref{lemma:Sugita}  $ \nabla (F\circ \tau_{\varepsilon h})=(\nabla F)\tr{\varepsilon h}$.
\qed
\end{proof}
\section{Malliavin's differentiability of BSDEs}
\label{section:BSDEs}

In this section we derive a sufficient condition ensuring that the solution to a BSDE is Malliavin differentiable. To simplify the comparison of the results with the companion papers \cite{EPQ,pardouxpeng} we adopt the notations used in these papers concerning the Malliavin calculus. More precisely, for any $F$ in $\D^{1,p}$ (for $p>1$) we have defined the Malliavin derivative $\nabla F$ as an $H$-valued random variable. Recall that denoting $DF$ the derivative of $\nabla F$ that is $\nabla_t F=\int_0^t D_r F dr$, $DF$ coincides with the Malliavin derivative introduced in \cite{EPQ,pardouxpeng,Nualartbook}. In particular $\langle \nabla F, h\rangle_H=\langle DF,\dot{h} \rangle_\h$ for any $h$ in $H$.
 
\vspace{0.5em} Let $n$ be a positive integer, we consider now the following BSDE:
\begin{equation}
Y_t=\xi+ \int_t^T f(r,Y_r,Z_r)dr-\int_t^T Z_r  dW_r, \ t\in [0,T], \ \P_0-a.s., \label{bsde}
\end{equation}
where $\xi:=(\xi^1,\cdots,\xi^n)^{\top}$ is a $\mathcal{F}_T$-measurable $\R^{n}$-valued r.v. and $f:[0,T]\times\Omega\times\real^n\times\real^{n\times d} \longrightarrow \real^n$ is a $\F$-progressively measurable process where as usual the $\omega$-dependence is omitted.
 
\vspace{0.5em}
The aim of this section is to show that for any $t\in[0,T]$, we can apply Theorem \ref{th:newcarD} under the following assumptions:

\begin{itemize}
\item[(L)] The map $(y,z)\longmapsto f(\cdot,y,z)$ is differentiable with uniformly bounded and continuous partial derivatives. We denote by $f_y:=\left(\frac{\partial f^j}{\partial y_k}\right)_{j\in\{1,\ldots,n\}, \ k\in\{1,\ldots, n \}}$ the Jacobian matrix of $f$ with respect to $y$, where $j$ (resp. $k$) indexes the columns (resp. the rows) of $f_y$ and $f^j_y$ denotes the gradient of $f^j$. We denote by $f_z^j$, for any $j\in \{1,\ldots,n \}$ the Jacobian matrix of $f^j$ with respect to $z$, that is $f_z^j=\left( \frac{\partial f^j}{\partial z_{k,l}}\right)_{k\in \{1,\ldots, n \}, \ l \in \{1,\ldots, d\}}$.
\item[(D)] $\xi$ belongs to $(\D^{1,2})^n$, for any $(y,z)\in\R^n\times\real^{n\times d}$, the map $(t,\omega)\longmapsto f(t,\omega,y,z)$ is in $L^2([0,T];(\D^{1,2})^n)$, $f(\cdot,y,z)$ and $Df(\cdot,y,z)$ are $\F$-progressively measurable, and
$$ \E\left[\int_0^T\sum_{j=1}^n \No{D_\cdot f^j(s,Y_s,Z_s)}_\h^2 ds\right]<+\infty. $$
\item[($H_1$)] There exists $p\in(1,2)$ such that for any $h\in H$ and for any $j \in\{1,\ldots,n\}$
$$\lim\limits_{\varepsilon \to 0}\ \E\left[ \left(\int_0^T\left|\frac{f^{ j}(s,\cdot+\varepsilon h, Y_s, Z_s)-f^{ j}(s,\cdot,Y_s,Z_s)}{\varepsilon}-\langle Df^{ j}(s,\cdot,Y_s,Z_s),\dh\rangle_{\h}\right| ds\right)^p\right]=0,$$ 
\item[($H_2$)] Let $(\varepsilon_k)_{k\in \N}$ be a sequence in $(0,1]$ such that $\lim\limits_{k\to +\infty} \varepsilon_k=0$, and let  $(Y^k,Z^k)_k$ be a sequence of random variables which converges in $\S_{ n}^p\times \H_{n,d}^p$ for any $p\in[1,2)$ to some $(Y,Z)$. Then for all $h\in H$ and for all $j\in\{1,\ldots,n\}$, the following convergences hold in probability
\begin{align}
\label{assumpcts1}
&\No{f^j_y(\cdot,\omega+\varepsilon_k h,Y^k_\cdot,Z_\cdot) -f^j_y(\cdot,\omega,Y_\cdot,Z_\cdot)}_{L^2([0,T];\mathbb R^{\b n})}\underset{k\to+\infty}{\longrightarrow}0 \nonumber\\
& \No{f^j_z(\cdot,\omega+\varepsilon_k h,Y^k_\cdot,Z^k_\cdot)-  f^j_z(\cdot,\omega,Y_\cdot,Z_\cdot)}_{L^2([0,T];\mathbb R^{n\times d})}\underset{k\to+\infty}{\longrightarrow}0,\end{align}
or
\begin{align}
\label{assumpcts2}
&\No{f_y^j(\cdot,\omega+\varepsilon_k h,Y^k_\cdot,Z^k_\cdot) -f_y^j(\cdot,\omega,Y_\cdot,Z_\cdot)}_{L^2([0,T];\mathbb R^{\b n})}\underset{k\to +\infty}{\longrightarrow}0\nonumber\\
& \No{f_z^j(\cdot,\omega+\varepsilon_k h,Y_\cdot,Z^k_\cdot)   -f_z^j(\cdot,\omega,Y_\cdot,Z_\cdot)}_{L^2([0,T];\mathbb R^{n\times d})}\underset{k\to +\infty}{\longrightarrow}0.
\end{align}
\end{itemize}

Before turning to the main result of this section, we would like to comment on Assumption $(H_2)$. In this explanation we set $n=1$ for the sake of simplicity. On the one hand, by Lemma \ref{lemma:ctsH}, at given $(s,y,z)$, $f_y(s,\omega+\varepsilon_k h,y,z)$ converges in probability to $f_y(s,\omega,y,z)$ as $n$ goes to infinity. On the other hand, $f_y(s,\omega,\cdot)$ is continuous by assumption. Thus, Condition $(H_2)$ is just requiring joint continuity of $f_y$ in $L^2([0,T])$. The same comment holds for $f_z$. Note finally, that since $f_y$ is assumed to be bounded, a sufficient condition for $(H_2)$ to hold true is that $f_y(t,Y_t^k,Z_t)$ converges in probability to $f_y(t,Y_t,Z_t)$ for $dt$-almost every $t$ (and the same for $f_z$). 

\begin{Theorem}\label{thm:yzd12}
Let $t$ be in $[0,T]$. Under Assumptions $(L)$, $(D)$, $(H_1)$ and $(H_2)$, $Y_t$ belongs to $(\D^{1,2})^n$ and $(Z^{j,:})^\top \in L^2([t,T];(\D^{1,2})^{d}), \; j\in \{1,\ldots,n\}$.
\end{Theorem}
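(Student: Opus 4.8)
The plan is to verify condition (iv) of Theorem \ref{th:newcarD} for the random variable $Y_t$, for each fixed $t\in[0,T]$, and for the $Z$ component to identify the correct notion of Malliavin differentiability as a process in $L^2$. The natural candidate for $\mathcal{D}Y_t$ (equivalently $DY_t$) is the first component of the solution $(\nabla Y, \nabla Z)$ to the linear (variational) BSDE obtained by formally differentiating \eqref{bsde} in the direction $h$, namely
\begin{equation*}
\nabla_\theta Y_t = \nabla_\theta\xi + \int_t^T\!\big(f_y(r,Y_r,Z_r)\nabla_\theta Y_r + f_z(r,Y_r,Z_r):\nabla_\theta Z_r + \langle Df(r,Y_r,Z_r),\mathbf 1_{[0,\theta]}\rangle\big)dr - \int_t^T \nabla_\theta Z_r\, dW_r.
\end{equation*}
Because of Assumption $(L)$ the coefficients $f_y,f_z$ are bounded, so this linear BSDE has a unique solution in $\S^2\times\H^2$ by standard theory, using Assumption $(D)$ to control the inhomogeneous term $\E[\int_0^T\|D_\cdot f(s,Y_s,Z_s)\|_\h^2 ds]<\infty$. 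Tested against $\dot h$, this produces the candidate derivative $\langle DY_t,\dot h\rangle_\h$ and the candidate $\langle DZ_\cdot,\dot h\rangle_\h$.

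First I would introduce the shifted objects. Writing $Y^{\varepsilon h}_t := Y_t\circ\tau_{\varepsilon h}$, $Z^{\varepsilon h}_r := Z_r\circ\tau_{\varepsilon h}$, I apply Lemma \ref{lemma:shift2} to the stochastic integral in \eqref{bsde} and Lemma \ref{lemma:UstunelZakai} to the pathwise identity, obtaining that $(Y^{\varepsilon h},Z^{\varepsilon h})$ solves the shifted BSDE
\begin{equation*}
Y^{\varepsilon h}_t = \xi\circ\tau_{\varepsilon h} + \int_t^T\! f^{\varepsilon h}(r,Y^{\varepsilon h}_r,Z^{\varepsilon h}_r)\,dr + \varepsilon\int_t^T\! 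Z^{\varepsilon h}_r\cdot\dot h(r)\,dr - \int_t^T Z^{\varepsilon h}_r\, dW_r,
\end{equation*}
where $f^{\varepsilon h}(r,\cdot,\cdot) := f(r,\omega+\varepsilon h,\cdot,\cdot)$. Next I form the normalized differences $U^{\varepsilon}_\cdot := \varepsilon^{-1}(Y^{\varepsilon h}_\cdot - Y_\cdot)$ and $V^{\varepsilon}_\cdot := \varepsilon^{-1}(Z^{\varepsilon h}_\cdot - Z_\cdot)$, which solve a BSDE with terminal value $\varepsilon^{-1}(\xi\circ\tau_{\varepsilon h}-\xi)$ and a generator that — after a Taylor expansion of $f^{\varepsilon h}$ in $(y,z)$ along the segment joining $(Y,Z)$ to $(Y^{\varepsilon h},Z^{\varepsilon h})$ — has the form $a^\varepsilon_r U^\varepsilon_r + b^\varepsilon_r : V^\varepsilon_r + c^\varepsilon_r$, with $a^\varepsilon,b^\varepsilon$ bounded (uniformly in $\varepsilon$, by $(L)$) and $c^\varepsilon_r = \varepsilon^{-1}(f^{\varepsilon h}(r,Y_r,Z_r)-f(r,Y_r,Z_r)) + Z^{\varepsilon h}_r\cdot\dot h(r)$. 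The strategy is then to show, using a priori estimates for linear BSDEs, that $(U^\varepsilon,V^\varepsilon)$ converges as $\varepsilon\to 0$ to $(\langle DY,\dot h\rangle_\h,\langle DZ,\dot h\rangle_\h)$ in $\S^q\times\H^q$ for $q\in[1,2)$; in particular $U^\varepsilon_t\to\langle DY_t,\dot h\rangle_\h$ in $L^1$, which is exactly condition (iv). For the $Z$ part one integrates in time and uses the $\H^q$-convergence.

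The convergence argument itself proceeds by comparing $(U^\varepsilon,V^\varepsilon)$ with the solution $(\nabla^h Y,\nabla^h Z):=(\langle \nabla Y,h\rangle,\langle\nabla Z,h\rangle)$ of the limiting linear BSDE: the difference solves a linear BSDE with bounded coefficients $a^\varepsilon, b^\varepsilon$ and source term equal to $(a^\varepsilon_r - f_y(r,Y_r,Z_r))\nabla^h Y_r + (b^\varepsilon_r - f_z(r,Y_r,Z_r)):\nabla^h Z_r + (c^\varepsilon_r - \langle Df(r,Y_r,Z_r),\dot h\rangle_\h)$. One must show each of these three pieces tends to $0$ in the appropriate norm. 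The first two are handled by $(H_2)$: the Taylor coefficients $a^\varepsilon_r, b^\varepsilon_r$ are integrals of $f_y, f_z$ evaluated at intermediate points of the form $(\omega+\varepsilon h, Y_r + \lambda(Y^{\varepsilon h}_r - Y_r), \dots)$, and since $(Y^{\varepsilon h},Z^{\varepsilon h})\to(Y,Z)$ in $\S^p\times\H^p$ (itself a consequence of Lemma \ref{lemma:ctsH} plus stability of BSDEs, or of the already-established convergence), $(H_2)$ forces $a^\varepsilon\to f_y(\cdot,Y,Z)$, $b^\varepsilon\to f_z(\cdot,Y,Z)$ in $L^2([0,T])$ in probability; boundedness then upgrades this via dominated convergence to the needed mode. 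The piece $\varepsilon^{-1}(f^{\varepsilon h}(r,Y_r,Z_r)-f(r,Y_r,Z_r)) - \langle Df(r,Y_r,Z_r),\dot h\rangle_\h$ tending to $0$ in $L^p(L^1([0,T]))$ is precisely Assumption $(H_1)$, and $Z^{\varepsilon h}_r\cdot\dot h(r)\to Z_r\cdot\dot h(r)$ follows from the $\H^q$-convergence of $Z^{\varepsilon h}$ and Cauchy--Schwarz in time. There is a mild circularity to resolve — the a priori $\S^p\times\H^p$ convergence of the shifts is needed to run $(H_2)$, yet is also close to the conclusion — which I would break either by first establishing that bare convergence (it does not require differentiability, only continuity via Lemma \ref{lemma:ctsH} and $L^p$-stability of BSDE solutions under perturbation of data with uniformly Lipschitz generator), or by a fixed-point/contraction argument on the pair $(U^\varepsilon,V^\varepsilon)$ directly.

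The main obstacle I anticipate is the bookkeeping around the $Z$ term $\varepsilon\int_t^T Z^{\varepsilon h}_r\cdot\dot h(r)\,dr$ and, relatedly, extracting strong $\H^q$-convergence of $V^\varepsilon$ rather than merely weak convergence: the standard linear BSDE estimate controls $\E[(\int_0^T\|V^\varepsilon_r\|^2 dr)^{q/2}]$ by the data, but to pass to the limit and identify the limit as $\langle DZ,\dot h\rangle$ one needs the source terms to converge in the right (predual) topology, and the only uniform integrability one has on $V^\varepsilon$ comes through $\H^p$ bounds with $p<2$. This is exactly why the whole scheme is run with exponents $q<p<2$ and why de la Vallée-Poussin / uniform-integrability arguments (as in Lemma \ref{lemma:D12nec}) reappear: one must be careful that every application of Hölder, BDG, and the Cameron--Martin density $\mathcal E(\varepsilon\int_0^T\dot h\cdot dW)$ — which by \eqref{exp.int} lies in every $\S^p$ — stays within these exponent budgets. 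Once the $\S^q\times\H^q$ convergence is secured, reading off (iv) for $Y_t$ and the $L^2([t,T];(\D^{1,2})^d)$ statement for $(Z^{j,:})^\top$ is immediate from Theorem \ref{th:newcarD} and the identification $\mathcal D = \nabla$.
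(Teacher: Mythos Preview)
Your approach is essentially the paper's, but there are two places where the paper is sharper and where your sketch leaves work undone.

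First, the ``main obstacle'' you flag --- the extra term $\varepsilon\int_t^T Z^{\varepsilon h}_r\cdot\dot h(r)\,dr$ coming from Lemma~\ref{lemma:shift2} --- is simply absent in the paper's argument. Since $Y_t$ is $\mathcal F_t$-measurable, Lemma~\ref{lemma:shift1} allows one to replace $h$ by $\widetilde{h^t}$ whose derivative vanishes on $(t,T]$; by \eqref{eq:intshiftprev} the shift of $\int_t^T Z_r\,dW_r$ then produces no drift correction, and the circularity you worry about (needing $Z^{\varepsilon h}\to Z$ in $\H^q$ before you have it) never arises. Your proposed workaround via a preliminary stability estimate is not wrong, but it is unnecessary.

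Second, you assert that the candidate $\mathcal D Y_t$ exists as an $L^2(H)$-valued random variable by solving a $\theta$-parameterized linear BSDE. Theorem~\ref{th:newcarD} requires precisely this: an element $\mathcal D Y_t\in L^p(H)$ such that $\hat Y^h_t=\langle\mathcal D Y_t,h\rangle_H$ for \emph{every} $h$. The paper does not take this for granted; it fixes an orthonormal basis $(h_k)$ of $H$, sets $\mathcal D Y_t:=\sum_k \hat Y_t^{h_k}\,h_k$, proves $\E[\|\mathcal D Y_t\|_H^2]<\infty$ via a priori estimates (this is estimate \eqref{eq:finitness}, and is exactly where the $\D^{1,2}$ rather than merely $\D^{1,p}$ conclusion comes from), and then checks by a stochastic Fubini argument that $\langle\mathcal D Y_t,h\rangle_H$ really solves the same BSDE as $\hat Y^h_t$, hence coincides with it by uniqueness. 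Your sketch elides this identification step entirely.

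Finally, for the $Z$ component, ``integrate in time and use $\H^q$-convergence'' is not quite the mechanism: the paper shows $\int_t^T (Z_s^{j,:})^\top\cdot dW_s\in\D^{1,2}$ (via BDG and the same $\H^q$ estimate) and then invokes \cite[Lemma~2.3]{pardouxpeng} to translate this into $(Z^{j,:})^\top\in L^2([t,T];(\D^{1,2})^d)$.
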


\begin{proof}
We only consider the case where \reff{assumpcts1} holds in Assumption $(H_2)$, since the other one can be treated similarly. We prove first that $Y_t^{j}$ belongs to $\D^{1,p}$ for any $j$ in $\{1,\ldots,n\}$ where $p\in(1,2)$ is the exponent appearing in Assumption $(H_1)$, and then we extend the result to $\D^{1,2}$. To this end we aim at applying Theorem \ref{th:newcarD}. Fix $j$ in $\{1,\ldots,n\}$. Let $h$ in $H$. Since $Y^{ j}$ is $\F$-progressively measurable, by Lemma \ref{lemma:shift1}, we can assume without loss of generality that $\dot{h}_s=0$ for $s>t$. Let $\varepsilon>0$. By Lemmas \ref{lemma:UstunelZakai} and \ref{lemma:shift2}, it holds that
$$ Y_s^{j}\tr{\varepsilon h} =\xi^{j}\tr{\varepsilon h} +\int_s^T f^{j}(r,Y_r,Z_r) \tr{\varepsilon h} dr -\int_s^T (Z_r^{j,:})^\top\tr{\varepsilon h} \cdot dW_r, \quad \forall s \in [t,T], \; \P_0-a.s.  $$
As a consequence, setting for the sake of simplicity
$$Y_s^\varepsilon:=\frac1\varepsilon (Y_s\circ \tau_{\varepsilon h}-Y_s),\quad Z_s^\varepsilon:=\frac1\varepsilon (Z_s\circ \tau_{\varepsilon h}-Z_s), \quad \xi^{\varepsilon}:=\frac1\varepsilon (\xi\circ \tau_{\varepsilon h}-\xi), \; s \in [t,T],$$ 
we have for any $j\in \{1,\ldots, n \}$
\begin{align}
\nonumber (Y_s^{\varepsilon})^{j}=&\ (\xi^{\varepsilon})^{j}+ \int_s^T \left(\tilde{A}_r^\varepsilon+\tilde{A}_r^{y,\varepsilon}\cdot Y_r^\varepsilon+\sum_{k=1}^d (f_z^j)^{:,k}(r,\cdot+\varepsilon h,Y_r \tr{\varepsilon h}, \widetilde{Z}^k_r)\cdot (Z_r^{\varepsilon})^{:,k} \right)dr\\
&\ \label{edsr_accroissement}-\int_s^T( (Z_r^{\varepsilon})^{j,:} )^\top \cdot dW_r,
\end{align}
with
\begin{align*}
    &\tilde{A}_r^{y,\varepsilon}:=f_y^{j}(r,\cdot+\varepsilon h, \bar{Y}_r^{\varepsilon,h}, Z_r),\ \tilde{A}_r^\varepsilon:=\frac1\varepsilon (f^{j}(r,\cdot+\varepsilon h, Y_r, Z_r)-f^{j}(r,\cdot,Y_r,Z_r)),
 \end{align*}
where $\bar{Y}_r^{\varepsilon,h}$ is a convex combination of $Y_r$ and $Y_r\circ\tau_{\varepsilon h}$ and where for any $k\in \{1,\ldots,d \}$, we have  $\widetilde Z^k_r:=(Z_r^{:,1}\tr{\varepsilon h},\ldots,Z_r^{:,k-1} \tr{\varepsilon h}, \bar{Z}_r^{:,k}, Z_r^{:,k+1}, \ldots , Z_r^{:,d})$ where $\bar{Z}^{:,k}_r$ is a convex combination of $Z_r^{:,k} \tr{\varepsilon h}$ and $Z_r^{:,k}.$ \vspace{0.5em}

Under Assumptions (D) and (L), the following linear BSDE on $[t,T]$ has a unique solution $(\hat{Y}^{h}, \hat{Z}^h)$ in $\mathbb S^2_n\times \mathbb H^2_{n,d}$ with $\hat{Y}^h:=( (\hat{Y}^{h})^{j})_{j\in \{1,\ldots,n \}}$, $ \hat{Z}^h:=( (\hat{Z}^h)^{j,:})_{j\in \{1,\ldots,n \}}$ and for any $j\in \{1,\ldots, n \}$
\begin{align}
\nonumber
(\hat{Y}^{h}_s)^{j}=&\ \langle \nabla(\xi^j),h\rangle_{H}+\int_s^T\Big( \langle \nabla (f^j) (r,\cdot,Y_r,Z_r),h\rangle_{H}+f_y^j(r,\cdot,Y_r,Z_r)\cdot \hat{Y}_r^{h} \\
&\label{eq:DY,h}+ \sum_{k=1}^d (f_z^j)^{:,k}(r,\cdot+\varepsilon h,Y_r, Z_r)\cdot (\hat{Z}_r^{h})^{:,k} \Big) dr -\int_s^T ((\hat{Z}_r^{h})^{j,:})^\top \cdot dW_r.
\end{align}
Using a priori estimates (see Proposition 3.2 in \cite{briand}) in $L^{p}$, we have for some constant $C_p$, independent of $\varepsilon$
\begin{align}
\label{eq:apriori1}
&\E\left[\sup_{s\in [t,T]} |(Y_s^{\varepsilon})^j-(\hat{Y}^{h}_s)^j|^p\right] + \E\left[\left(\int_t^T \|((Z_s^{\varepsilon})^{j,:})^\top-((\hat{Z}^{h}_s)^{j,:})^\top\|^2 ds\right)^{p/2}\right]\nonumber\\
&\leq C_p \left(\E\left[|(\xi^{\varepsilon})^{j}-\langle \nabla(\xi^{ j}),h\rangle_{H}|^p\right] +\E\left[\left(\int_0^T \abs{\tilde{A}_s^\varepsilon-\langle \nabla (f^{ j})(s,\cdot,Y_s,Z_s),h\rangle_{H}} ds \right)^p\right]\right) \nonumber\\
&\hspace{0.8em}+C_p\E\left[\left(\int_0^T \|\tilde{A}_s^{y,\varepsilon}-f_y^{ j}(s,\cdot,Y_s,Z_s)\| \|\hat{Y}_s^{h}\|ds\right)^p\right]\nonumber\\ 
&\hspace{0.8em}+ C_p\sum_{k=1}^d \E\left[\left(\int_0^T \| (f_z^j)^{:,k}(s,\cdot+\varepsilon h,Y_s \tr{\varepsilon h}, \widetilde{Z}^k_s)-(f_z^j)^{:,k}(s,\cdot+\varepsilon h,Y_s,Z_s)\|\|(\hat{Z}_s^{h})^{:,k} \|ds\right)^p \right].
\end{align}
Since $\xi^{j}$ is in $\D^{1,2}$, $\lim_{\varepsilon \to 0} \E\left[|(\xi^{\varepsilon})^j-\langle \nabla(\xi^{ j}),h\rangle_{H}|^p\right]=0$ by Lemma \ref{lemma:D12nec}. By Assumption ($H_1$), the second term in the right-hand side of \reff{eq:apriori1} goes to $0$ as $\varepsilon$ goes to $0$. For the last two terms, we will use Assumption ($H_2$). First, the above estimate implies directly that $(Y^{j}\tr{\varepsilon h}-Y^{j},(Z^{ j,:})^\top \tr{\varepsilon h}-(Z^{j,:})^\top)_\varepsilon$ goes to $0$ in $\S^q\times\H_{d}^q$ as $\varepsilon$ goes to $0$ for any $q\in(1,2)$. We can therefore conclude with Assumption ($H_2$), together with the fact that $\|f_y^{j}\|$ is bounded, that by the dominated convergence theorem:
\begin{align*}
&\E\left[\left(\int_0^T \|\tilde{A}_s^{y,\varepsilon}-f_y^{ j}(s,\cdot,Y_s,Z_s)\| \|\hat{Y}_s^{h}\| ds\right)^p\right]\\ 
&\leq C \E\left[\left(\int_0^T \|\tilde{A}_s^{y,\varepsilon}-f_y^{ j}(s,\cdot,Y_s,Z_s)\|^2 ds\right)^{p}\right]^\frac12 \E\left[\left(\int_0^T \|\hat{Y}_s^{h}\|^2ds\right)^{p}\right]^\frac12\underset{\varepsilon \to 0}{\longrightarrow}0.
\end{align*}
We can show similarly that the last term on the right-hand side of \reff{eq:apriori1} also goes to $0$, by using the fact that for any $j\in \{ 1,\ldots,n\}$, $((\hat Z^{h})^{ j,:})^\top\in\mathbb H_{d}^2$. It just remains to prove that for any $j\in \{1,\ldots,n \}$, $(\hat{Y}_t^{h})^{j}$ is a random operator on $H$ or equivalently that there exists $\mathcal{D}Y^j_t$ an $H$-valued r.v. such that $(\hat{Y}_t^{h})^{j}=\langle \mathcal{D} Y^j_t, h\rangle_H$ for any $h$ in $H$. To this end, let $(h_k)_k$ be an orthonormal system in $H$, we set for any $j \in \{1,\ldots, n \}, \ \ell\in \{1,\ldots,d \}$:
$$\mathcal{D} Y^j_t:=\sum_{k\geq 1} (\hat{Y}_t^{h_k})^j h_k,\ \mathcal{D} Z^{j,l}_s:=\sum_{k\geq 1} ((\hat{Z}_s^{h_k})^{j,l}) h_k.$$ 
Note that these elements are well-defined, since one can prove that $\mathcal{D} Y^j_t \in L^2(H)$ and that $\mathcal{D} Z^{j,\ell} \in L^2([t,T];H)$. Indeed using once again a priori estimates for affine BSDEs, there exists $C>0$ (which may differ from line to line) such that: 
\begin{align}
\label{eq:finitness}
&\E\left[\sum_{j=1}^n\|\mathcal{D} Y^j_t\|_H^2 + \sum_{j=1}^n\sum_{\ell=1}^d\int_t^T \|\mathcal{D} Z^{j,\ell}_s\|_H^2 ds\right]\nonumber\\
&=\sum_{k\geq 1} \E\left[\sum_{j=1}^n|(\hat{Y}_t^{h_k})^{ j}|^2 + \sum_{j=1}^n\sum_{\ell=1}^d\int_t^T |(\hat{Z}_s^{h_k})^{ j,\ell}|^2 ds\right]\nonumber\\
&\leq C \sum_{k\geq 1} \E\left[\sum_{j=1}^n|\langle \nabla (\xi^j), h_k\rangle_{H}|^2 + \sum_{j=1}^n\int_t^T |\langle \nabla (f^j)(s,Y_s,Z_s), h_k \rangle_{H}|^2 ds\right]\nonumber\\
&\leq C \sum_{j=1}^n\E\left[\|\nabla (\xi^j)\|_H^2 + \int_t^T \|\nabla (f^j)(s,Y_s,Z_s)\|_H^2 ds\right]<+\infty,
\end{align}
by our assumptions on $\xi$ and $f$. We now identify $(\hat{Y}^{h}_t)^{j}$ (respectively $(\hat{Z}_t^{h})^{j,\ell}$) with the inner product $\langle \mathcal{D} Y^{j}_t, h\rangle_H$ (respectively $\langle \mathcal{D} Z^{j,\ell}_t, h\rangle_H$).
For any $s\geq t$, it holds that:
\begin{align*}
\langle \mathcal{D} Y^{j}_s, h\rangle_H&=\sum_{k\geq 1} \langle \nabla (\xi^{ j}), h_k\rangle_{H} \langle h_k, h \rangle_H + \sum_{k\geq 1} \langle h_k, h \rangle_H \int_s^T \langle \nabla (f^{ j})(r,Y_r,Z_r), h_k\rangle_{H} dr\\
&\quad+ \sum_{k\geq 1} \langle h_k, h \rangle_H \int_s^T \left(f_y^{j}(r,Y_r,Z_r) \cdot \hat{Y}_r^{h_k} +\sum_{l=1}^d (f_z^j)^{:,l}(r,\cdot+\varepsilon h,Y_r, Z_r)\cdot (\hat{Z}_r^{h_k})^{:,l} \right)dr\\
&\quad+ \sum_{k\geq 1} \langle h_k, h \rangle_H \int_s^T ((\hat{Z}_r^{h_k})^{j,:})^\top\cdot  dW_r\\
&=  \langle  \nabla (\xi^{ j}), h\rangle_{H} +\int_s^T\Bigg( \langle \nabla (f^{ j})(r,Y_r,Z_r), h\rangle_H + f_y^{ j}(r,Y_r,Z_r) \cdot \sum_{k\geq 1}\hat{Y}_r^{h_k}\langle h_k,h\rangle_H \\
&\quad +\sum_{k\geq 1}  \langle h_k, h \rangle_H\sum_{l=1}^d  (f_z^j)^{:,l}(r,\cdot+\varepsilon h,Y_r, Z_r)\cdot (\hat{Z}_r^{h_k})^{:,l}\Bigg)dr \\
&\quad+ \int_s^T \Sum_{k\geq 1}\langle h_k,h\rangle_H((\hat Z^{h_k}_r)^{j,:})^\top \cdot dW_r,
\end{align*}
where we justify the exchange between the series and the Riemann integrals by Fubini's Theorem. Concerning the Wiener integral we make use of the stochastic Fubini's Theorem (see e.g. \cite{Veraar}) since by a priori estimates:
\begin{align*}
&\sum_{k\geq 1} \E\left[\int_0^T |\langle h_k,h\rangle_H|\times \|(\hat{Z}^{h_k})_t^{ j,:}\|^2 dt\right]^{1/2}\\
&\leq C \sum_{k\geq 1} \abs{\langle h_k,h\rangle_H}  \E\left[|\langle \nabla (\xi^j), h_k\rangle_H|^2 +\int_0^T |\langle \nabla (f^{j})(r,Y_r,Z_r), h_k\rangle_H|^2 dr\right]^{1/2}\\
&\leq C \E\left[\|\nabla (\xi^j)\|_H^2 +\int_0^T \|\nabla (f^{j})(r,Y_r,Z_r)\|_H^2 dr\right]<+\infty,
\end{align*}
where $C$ is a constant which may vary from line to line. By setting $$\mathcal{D} Y^h_s:=(\langle \mathcal{D} Y^{j}_s, h\rangle_H)_{j\in \{1,\ldots,n\}},\ \mathcal DZ_s^{h}:=\left(\langle \mathcal D Z^{j,l}_s,h\rangle_H\right)_{1\leq j\leq n,\ 1\leq \ell\leq d}, $$
we obtain
\begin{align*}
\langle \mathcal{D} Y^{j}_s, h\rangle_H&=  \langle  \nabla (\xi^{ j}), h\rangle_{H} +\int_s^T\Bigg( \langle \nabla (f^{ j})(r,Y_r,Z_r), h\rangle_H + f_y^{ j}(r,Y_r,Z_r)\cdot \mathcal{D} Y^h_s \\
&\quad +\sum_{\ell=1}^d  (f_z^j)^{:,\ell}(r,\cdot+\varepsilon h,Y_r, Z_r)\cdot (\mathcal D {Z}_r^h)^{:,\ell}  \Bigg)dr + \int_s^T  ((\mathcal{D} Z_r^h)^{j,:})^\top \cdot dW_r,
\end{align*}

Thus, by uniqueness of the solution to affine BSDEs with square integrable data, it holds that $(\hat{Y}_t^{h})^j=\langle \mathcal{D} Y^{ j}_t, h\rangle_H$ in $L^2(\real)$ and $(\hat{Z}^{h})^{j,:} \textbf{1}_{[t,T]}= (\mathcal{D} Z^{h})^{ j,:} $ in $\H_d^2$ for any $h$ in $H$.
Thus, using Estimate \eqref{eq:apriori1} we have proved that for any $h$ in $H$,
$$\lim_{\varepsilon \to 0} \E\left[|(Y_t^{\varepsilon})^j-\langle \mathcal{D} Y^{ j}_t,h\rangle_H|^p\right]=0.$$
Hence by Theorem \ref{th:newcarD}, $Y_t^{j}$ belongs to $\D^{1,p}$ and $\nabla Y_t^{j}=\mathcal{D} Y_t^{j}$. 

\vspace{0.5em}

If we set
$$ \mathcal{D} \int_t^T (Z_s^{j,:})^\top \cdot dW_s:= \sum_{k\geq 1} \int_t^T ((\hat{Z}_s^{h_k})^{ j,:})^\top \cdot  dW_sh_k,$$
the stochastic Fubini Theorem implies that:
$$ \left\langle \mathcal{D} \int_t^T (Z_s^{j,:})^\top \cdot dW_s, h\right\rangle_H =\int_t^T ((\mathcal{D} Z^{h}_s)^{j,:})^\top\cdot dW_s.  $$ 
Moreover, Burkholder-Davis-Gundy's inequality implies that there exists $\tilde{C}_p>0$ such that
\begin{align*}
&\E\left[\left|\varepsilon^{-1} \left(\int_t^T (Z_r^{ j,:})^\top \cdot dW_r \tr{\varepsilon h} - \int_t^T (Z_r^{ j,:})^\top\cdot dW_r\right)-\left\langle \mathcal{D} \int_t^T (Z_r^{j,:})^\top \cdot dW_r, h\right\rangle_H \right|^p\right]\\
&=\E\left[\left|\varepsilon^{-1} \left(\int_t^T (Z_r^{j,:})^\top \cdot dW_r \tr{\varepsilon h} - \int_t^T (Z_r^{j,:})^\top \cdot dW_r\right)-\int_t^T ((\mathcal{D} Z^{h}_r)^{j,:})^\top\cdot dW_r \right|^p\right]\\
&\leq \E\left[\sup_{t\leq s\leq T}\left|\int_t^s \left(\varepsilon^{-1}((Z_r^{j,:})^\top\tr{\varepsilon h} -(Z_r^{j,:})^\top)- ((\mathcal{D} Z^{h}_r)^{j,:})^\top\right)\cdot dW_r \right|^p\right]\\
&\leq \tilde{C}_{p} \E\left[\left(\int_t^T \|((Z_r^\varepsilon)^{j,:})^\top-  ((\mathcal{D} Z^{h}_r)^{j,:})^\top\|^2 dr \right)^{p/2}\right]\\
&=\tilde{C}_{p} \E\left[\left(\int_t^T \|((Z_r^\varepsilon)^{j,:})^\top-  ((\hat{Z}^{h}_r)^{j,:})^\top\|^2 dr \right)^{p/2}\right],
\end{align*}
where we have used in the last inequality the fact that $(\hat{Z}^{h})^{j,:} \textbf{1}_{[t,T]}= (\mathcal{D} Z^{h})^{ j,:} $ in $\H_d^2$ for any $h$ in $H$.

\vspace{0.5em}
The right-hand side above tends to $0$ as $\varepsilon$ goes to $0$, once again by \eqref{eq:apriori1}. Therefore, 
$$\int_t^T (Z_s^{ j,:})^\top\cdot dW_s\in\D^{1,p}\ \text{and}\ \nabla \int_t^T (Z_s^{ j,:})^\top\cdot  dW_s = \mathcal{D} \int_t^T (Z_s^{ j,:})^\top\cdot dW_s.$$ Furthermore, by the computations \eqref{eq:finitness} we deduce that $Y_t^j$ belongs to $\D^{1,2}$ and that $\int_t^T (Z_s^{j,:})^\top \cdot dW_s$ belong to $\D^{1,2}$ which, by \cite[Lemma 2.3]{pardouxpeng} implies that $Z^{ j,:}$ belongs to $L^2([t,T]; (\D^{1,2})^d)$.
Finally, to match with the notations of the papers \cite{pardouxpeng,EPQ} let us denote by $DY^{ j}$ and $DZ^{ j,:}$ the derivatives of respectively $\nabla Y^{j}$ and $\nabla Z^{ j,:}$. We also define $D_sY\in \mathbb R^{n\times d}$ by 
$$ (D_sY)^{j,\ell}:= (D_sY^j)^\ell, \; 1\leq j\leq n, \ 1\leq \ell\leq d,$$
and similarly for $D_s\xi$, $D_sf$ and $D_sZ^{:,k}$, $1\leq k\leq d$. Let us also define for any $1\leq j\leq n$, $D_sZ^{j,:}\in\R^{d\times n}$ by
$$(D_sZ^{j,:})^{k,\ell}:=(D_sZ^{j,k})^{\ell},\ 1\leq k\leq d,\ 1\leq \ell\leq n.$$
We thus obtain using the chain rule formula
\begin{align}
\label{eq:DY}
(D_s Y_t)^{j,\ell} =&\ (D_s \xi)^{j,\ell} +\int_t^T \left((D_s f)^{j,\ell}(r,Y_r,Z_r)+ f_y^{j}(r,Y_r,Z_r) \cdot ((D_s Y_r)^{j,:})^\top\right)dr \nonumber\\
& +\int_t^T \sum_{k=1}^d (f_z^j)^{:,k}(r ,Y_r, Z_r) \cdot ((D_s Z_r^{k,:})^{j,:})^\top dr -\int_t^T (D_s Z_r^{j,:})^{:,\ell}\cdot  dW_r,
\end{align}
which can be interpreted as an affine BSDE.
\qed
\end{proof}

\begin{Remark}
We would like to point out that since each process $Z^j$ is defined as a $\h$-valued r.v., one may be careful not to study $Z$ directly at a given time, as $Z_t$ is not well defined for a given $t$. Hence, in the proof we rather study at any time $t$ the random variable $\int_t^T Z_s dW_s$ and prove that it belongs to $\D^{1,2}$. Then by \cite[Lemma 2.3]{pardouxpeng} the latter result is equivalent to the fact that $Z$ belongs to $L^2([t,T]; \D^{1,2})$. 
\end{Remark}

\begin{Remark}
We emphasize that our criterion can also be used to study higher-order differentiability properties for $(Y,Z)$. For instance, the pair $(DY,DZ)$ is itself the solution of a (linear) BSDE. Therefore, as long as one is able to derive appropriate a priori estimates for this BSDE, the methodology above can then be applied to obtain conditions ensuring second-order Malliavin differentiability of $(Y,Z)$. Notice nonetheless that when handling higher order derivatives, products of lower order derivatives appear. One may then need to add conditions on the coefficients ensuring strong integrability properties of $(Y,Z)$ and their Malliavin derivatives. \end{Remark}

\section{Applications and discussion of the results}
\label{section:applidiscus}
For simplicity, in this section we will enforce that $n=d=1$.
\subsection{Application to FBSDEs} 
\label{section:FBSDE}

We consider in this section a FBSDE of the form

\begin{equation}\label{fbsde}
\begin{cases}
\displaystyle X_t= X_0+\int_0^t b(s,X_s) ds +\int_0^t \sigma(s,X_s)dW_s,\ t\in[0,T],\ \P_0-a.s.\\
\displaystyle Y_t = g(X_T) +\int_t^T f (s,X_s,Y_s,Z_s) ds -\int_t^T Z_s dW_s, \ t\in [0,T],\ \P_0-a.s.,
\end{cases} 
\end{equation}  
where $X_0\in \mathbb R$. We make the following Assumptions:

\begin{itemize}
\item[$(A_1)$]$b,\sigma : [0,T]\times \real \longrightarrow \real$ are continuous in time and continuously differentiable in space for any fixed time $t$ and such that there exist $k_b,k_\sigma >0$ with 
$$|b_x(t,x)|\leq k_b,\ |\sigma_x(t,x)|\leq k_\sigma, \text{ for all $x\in\R$}.$$ 
Besides $b(t,0), \sigma(t,0)$ are bounded functions of $t$.
\item[$(A_2)$]
\begin{itemize}
\item[(i)]$g$ is continuously differentiable with polynomial growth.
\item[(ii)]  $f : [0,T]\times \mathbb{R}^3 \longrightarrow \mathbb{R}$ is continuously differentiable in $(x,y,z)$ with bounded first partial derivatives in $y,z$ uniformly in $t$, such that $\E[\int_0^T \abs{f(s,0,0,0)}^2 ds] <+\infty$ and satisfying for some $C>0$
$$ \exists (q,\kappa)\in\R_+\times[0,2),\ |f_x(t,x,y,z)| \leq C(1+\abs{y}^\kappa+\abs{z}^\kappa+|x|^q), \ \forall (t,x,y,z) \in [0,T]\times \real^3.$$
\end{itemize}
\end{itemize}
Notice that under $(A_1)$ and $(A_2)$, the FBSDE \eqref{fbsde} admits a unique solution $(X,Y,Z)$ (see \cite{Pardoux_Peng90}). The well-known following lemma provides the existence of a Malliavin derivative for $X_t$ for all $t\in [0,T]$ under Assumption $(A_1)$ (see e.g. \cite[Theorem 2.2.1]{Nualartbook}).

\begin{Lemma}\label{lem:dx}
Under Assumption $(A_1)$, for any $p\geq 1$, $X_t \in \D^{1,p}$ for all $t\in [0,T]$, and $X\in \S^p$.\end{Lemma}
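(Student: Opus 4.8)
The plan is to treat the two assertions separately. The integrability statement $X\in\S^p$ is the classical one: Assumption $(A_1)$ forces $b$ and $\sigma$ to have linear growth in the space variable, uniformly in time, since $|b(t,x)|\leq|b(t,0)|+k_b|x|$ and $|\sigma(t,x)|\leq|\sigma(t,0)|+k_\sigma|x|$ with $t\mapsto b(t,0),\sigma(t,0)$ bounded; combining the Burkholder--Davis--Gundy inequality for the stochastic integral with Gronwall's lemma then gives $\E[\sup_{0\leq t\leq T}|X_t|^p]<+\infty$ for every $p\geq1$.

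For the Malliavin differentiability of $X_t$ I would invoke the characterisation of Theorem \ref{th:newcarD}: it suffices to exhibit, for each $t\in[0,T]$, a random variable $\mathcal{D}X_t\in L^p(H)$ with $\varepsilon^{-1}(X_t\tr{\varepsilon h}-X_t)\to\langle\mathcal{D}X_t,h\rangle_H$ in $L^q(\R)$ for some (hence any) $q\in[1,p)$, for every $h\in H$. Fix $h\in H$. By Lemmas \ref{lemma:UstunelZakai} and \ref{lemma:shift2} the shifted process solves
$$X_t\tr{\varepsilon h}=X_0+\int_0^t b(s,X_s\tr{\varepsilon h})\,ds+\int_0^t \sigma(s,X_s\tr{\varepsilon h})\,dW_s+\varepsilon\int_0^t \sigma(s,X_s\tr{\varepsilon h})\dot h(s)\,ds.$$
Writing $U^\varepsilon:=\varepsilon^{-1}(X\tr{\varepsilon h}-X)$ and using a first-order Taylor expansion of $b$ and $\sigma$, one sees that $U^\varepsilon$ satisfies a linear SDE whose (random, uniformly bounded) coefficients and source term converge as $\varepsilon\to 0$ to those of the variational equation
$$V_t=\int_0^t \sigma(s,X_s)\dot h(s)\,ds+\int_0^t b_x(s,X_s)V_s\,ds+\int_0^t \sigma_x(s,X_s)V_s\,dW_s.$$
Standard $L^q$ a priori estimates for linear SDEs, together with the boundedness of $b_x,\sigma_x$, their continuity in space, and the convergence $X\tr{\varepsilon h}\to X$ in $\S^q$ (itself obtained from the same estimates and Lemma \ref{lemma:ctsH}), then yield $U^\varepsilon\to V$ in $\S^q$, hence $U^\varepsilon_t\to V_t$ in $L^q(\R)$. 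Taking an orthonormal basis $(h_k)_k$ of $H$ and setting $\mathcal{D}X_t:=\sum_{k\geq1}V^{h_k}_t h_k$, with $V^{h_k}$ the solution of the variational equation driven by $h_k$, the a priori estimates give $\mathcal{D}X_t\in L^p(H)$ and $\langle\mathcal{D}X_t,h\rangle_H=V_t$, so Theorem \ref{th:newcarD} gives $X_t\in\D^{1,p}$; the derivative then coincides with the familiar equation $D_rX_t=\sigma(r,X_r)+\int_r^t b_x(s,X_s)D_rX_s\,ds+\int_r^t \sigma_x(s,X_s)D_rX_s\,dW_s$ for $r\leq t$, and $D_rX_t=0$ for $r>t$.

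The main technical point is the passage $U^\varepsilon\to V$: the equation for $U^\varepsilon-V$ carries source terms of the form $(b_x(s,\bar X^\varepsilon_s)-b_x(s,X_s))U^\varepsilon_s$, $(\sigma_x(s,\bar X^\varepsilon_s)-\sigma_x(s,X_s))U^\varepsilon_s$ evaluated at intermediate points $\bar X^\varepsilon_s$ between $X_s$ and $X_s\tr{\varepsilon h}$, plus $(\sigma(s,X_s\tr{\varepsilon h})-\sigma(s,X_s))\dot h(s)$, and controlling them requires uniform integrability of the relevant families (via the $\S^p$ bound and de La Vall\'ee Poussin's criterion, exactly as in Lemma \ref{lemma:D12nec}) together with continuity of $b_x,\sigma_x$ and dominated convergence --- this is the forward analogue of the argument used for the BSDE in Theorem \ref{thm:yzd12}. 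Alternatively, the statement is entirely classical and can be proved by the Picard iteration $X^{(0)}\equiv X_0$, $X^{(m+1)}_t=X_0+\int_0^t b(s,X^{(m)}_s)\,ds+\int_0^t \sigma(s,X^{(m)}_s)\,dW_s$, showing inductively that $X^{(m)}_t\in\D^{1,p}$ with $\sup_m\No{X^{(m)}_t}_{1,p}<+\infty$ and that $(\nabla X^{(m)}_t)_m$ converges in $L^p(H)$, which is the route of \cite[Theorem 2.2.1]{Nualartbook}.
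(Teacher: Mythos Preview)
Your proposal is correct. The paper does not give its own proof of this lemma: it simply cites the classical result \cite[Theorem~2.2.1]{Nualartbook}, which is precisely the Picard-iteration route you describe in your final paragraph.

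Your primary argument via Theorem~\ref{th:newcarD} is a genuinely different route. It is the forward analogue of what the paper does for BSDEs in Theorem~\ref{thm:yzd12}, and in fact the key computations you outline (the linear SDE satisfied by $M^{\varepsilon,h}:=X\tr{\varepsilon h}-X$, the convergence $\sup_t|X_t\tr{\varepsilon h}-X_t|\to 0$ in $L^q$, and the convergence of $P^{\varepsilon,h}:=\varepsilon^{-1}(X\tr{\varepsilon h}-X)-\langle DX,\dot h\rangle_\h$ to $0$ via the explicit representation of solutions to affine SDEs) are carried out later in the paper, inside the proof of Proposition~\ref{prop:a:implique:h}. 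So your approach is consistent with the paper's philosophy and even anticipates computations the authors perform, but for the lemma itself the paper is content to invoke the classical reference.
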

The following Proposition shows that Assumptions $(A_1)$ and $(A_2)$, which are actually weaker than $(PP1)$ and $(PP2)$, imply our new assumptions $(H_1)$ and $(H_2)$. As a corollary, using Theorem \ref{thm:yzd12}, we recover the original result \cite[Proposition 2.2]{pardouxpeng}.

\begin{Proposition}\label{prop:a:implique:h}
Let $(X,Y,Z)$ be the unique solution of FBSDE \eqref{fbsde}. Under Assumptions $(A_1)$, $(A_2)$, Assumptions $(L)$, $(D)$, $(H_1)$ and $(H_2)$ hold.
\end{Proposition}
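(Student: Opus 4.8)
The plan is to verify each of the four assumptions $(L)$, $(D)$, $(H_1)$ and $(H_2)$ in turn for the FBSDE data $\xi := g(X_T)$ and $f(t,\omega,y,z) := f(t,X_t(\omega),y,z)$, exploiting that $X_t \in \D^{1,p}$ for every $p\geq 1$ and every $t$ (Lemma \ref{lem:dx}), and that $X \in \S^p$ for every $p$. Assumption $(L)$ is immediate: it is exactly the requirement that $(y,z)\mapsto f(\cdot,y,z)$ be differentiable with bounded continuous partial derivatives, which is part of $(A_2)(ii)$. The only mild point is that $f_y$, $f_z$ must be continuous as functions of $(t,\omega,y,z)$ as well, but this follows from the assumed continuity in $(x,y,z)$ composed with the continuity of $t\mapsto X_t(\omega)$.

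For $(D)$, I would first show $\xi = g(X_T) \in \D^{1,2}$ by the chain rule: since $g$ is $C^1$ with polynomial growth and $X_T \in \D^{1,p}$ for all $p$, while $X_T$ itself has moments of every order (from $X\in\S^p$), one gets $\nabla(g(X_T)) = g'(X_T)\nabla X_T$ with $g'(X_T)$ in every $L^q$; this is the standard chain rule for $\D^{1,p}$ (e.g.\ \cite[Proposition 1.2.3]{Nualartbook}) applied after a truncation argument to handle the polynomial growth. Similarly $f(\cdot,X_\cdot,y,z) \in L^2([0,T];\D^{1,2})$ for each fixed $(y,z)$, with $D_s f(t,X_t,y,z) = f_x(t,X_t,y,z) D_s X_t$; here the growth bound $|f_x(t,x,y,z)|\le C(1+|y|^\kappa+|z|^\kappa+|x|^q)$ together with $X\in\S^p$ gives the needed integrability. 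The progressive measurability statements are routine. Finally the integrability condition $\E[\int_0^T \|D_\cdot f(s,Y_s,Z_s)\|_\h^2 ds]<+\infty$ reduces, via $D_sf(t,Y_t,Z_t)=f_x(t,X_t,Y_t,Z_t)D_sX_t$ and Cauchy--Schwarz, to controlling $\E[\int_0^T (1+|Y_s|^\kappa+|Z_s|^\kappa+|X_s|^q)^2 \|D_\cdot X_s\|_\h^2\,ds]$; since $\kappa<2$ and $(Y,Z)$ is an $L^p$-solution for every $p$ under these assumptions, another Cauchy--Schwarz (or Young) splits the product and everything is finite.

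For $(H_1)$, I would write the difference quotient $\varepsilon^{-1}(f(s,X_s\tr{\varepsilon h},Y_s,Z_s)-f(s,X_s,Y_s,Z_s))$ using the mean value theorem as $f_x(s,\bar X_s^{\varepsilon,h},Y_s,Z_s)\cdot \varepsilon^{-1}(X_s\tr{\varepsilon h}-X_s)$, and recognize $\langle Df(s,Y_s,Z_s),\dh\rangle_\h = f_x(s,X_s,Y_s,Z_s)\langle DX_s,\dh\rangle_\h$. Since $X_s\in\D^{1,p}$, Lemma \ref{lemma:D12nec} gives $\varepsilon^{-1}(X_s\tr{\varepsilon h}-X_s)\to \langle DX_s,\dh\rangle_\h$ in $L^q$ for every $q<p$, and by Lemma \ref{lemma:ctsH} the argument $\bar X_s^{\varepsilon,h}\to X_s$ in probability so $f_x(s,\bar X_s^{\varepsilon,h},Y_s,Z_s)\to f_x(s,X_s,Y_s,Z_s)$ in probability; combining with uniform integrability (coming from the polynomial growth of $f_x$ and the moments of $X$, $Y$, $Z$) one upgrades to $L^p$-convergence of the time-integral for some $p\in(1,2)$. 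For $(H_2)$ I would use the second alternative \reff{assumpcts2}: $f_y^j, f_z^j$ are bounded and continuous in all variables, $X_s\tr{\varepsilon_k h}\to X_s$ in probability (Lemma \ref{lemma:ctsH}), and $(Y^k,Z^k)\to(Y,Z)$ in $\S^p\times\H^p$, so along a subsequence the arguments converge for a.e.\ $(s,\omega)$; boundedness and dominated convergence then give the required $L^2([0,T])$-convergence in probability, and the subsequence principle removes the subsequence. The main obstacle, I expect, is not any single step but the bookkeeping of integrability exponents in $(D)$ and $(H_1)$: one must carefully track the interplay between the polynomial growth exponents $q,\kappa$, the exponent $\kappa<2$, and the fact that the $L^p$-solution $(Y,Z)$ and the Malliavin derivative $DX$ have all moments, so that repeated Hölder/Cauchy--Schwarz splittings close — the quadratic-type term $|Z_s|^\kappa$ with $\kappa<2$ being precisely what makes this work.
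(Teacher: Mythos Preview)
Your proposal is correct and follows the same skeleton as the paper---check $(L)$, $(D)$, $(H_1)$, $(H_2)$ separately, use the mean-value decomposition for $(H_1)$ and bounded continuity plus Lemma~\ref{lemma:ctsH} for $(H_2)$---but it diverges from the paper at the heart of $(H_1)$. You invoke Lemma~\ref{lemma:D12nec} pointwise in $s$ to get $\varepsilon^{-1}(X_s\tr{\varepsilon h}-X_s)\to\langle DX_s,\dot h\rangle_\h$ in $L^q$, and then close via uniform integrability; the paper instead exploits the SDE structure directly: by Lemma~\ref{lemma:shift2}, both $X\tr{\varepsilon h}-X$ and $P^{\varepsilon,h}:=\varepsilon^{-1}(X\tr{\varepsilon h}-X)-\langle DX,\dot h\rangle_\h$ solve explicit linear SDEs, and standard SDE estimates then yield the \emph{uniform} statement $\E[\sup_{t\in[0,T]}|P^{\varepsilon,h}_t|^q]\to 0$ for every $q\geq 1$. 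This sup-control is paired once, via Cauchy--Schwarz, with the time-integral of the $f_x$ factor, which is precisely how the paper gets away with only the a priori $\S^2\times\H^2_d$ bound on $(Y,Z)$ (see the constraint $\kappa p\bar r<2$ in the proof). Your route is legitimate, but you should make explicit the step from $s$-pointwise $L^q$-convergence to convergence of the time-integral (it follows from the uniform-in-$(\varepsilon,s)$ moment bound implicit in the proof of Lemma~\ref{lemma:D12nec}, plus dominated convergence in $s$). You also rely on $(Y,Z)\in\S^p\times\H^p_d$ for all $p$; this is indeed true here (Lipschitz driver, $g(X_T)$ and $f(\cdot,X_\cdot,0,0)$ have all moments, so \cite{briand} applies), but the paper deliberately avoids it and works only with $r\leq 2$, which is a slightly sharper use of the hypothesis $\kappa<2$.
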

\begin{Remark}
We insist on the fact that in the Markovian case, the original assumptions $($PP1$)$ and $($PP2$)$ of \cite{pardouxpeng} imply directly our new assumptions $(H_1)$ and $(H_2)$ while Assumptions $($EPQ1$)$ and $($EPQ2$)$ of \cite{EPQ} are strictly stronger than $(PP1)$ and $(PP2)$. In other words, in the Markovian case our assumptions are enough to recover the original result \cite[Proposition 2.2]{pardouxpeng}, without any additional conditions.
\end{Remark}
\begin{proof}[Proof of Proposition \ref{prop:a:implique:h}]
From Lemma \ref{lem:dx}, Property (D) holds by the chain rule formula and (L) follows from our assumptions. It remains to prove ($H_1$) and ($H_2$). We start with ($H_1$). Let $1<p<2$ and $h$ in $H$. Below $C$ denotes a positive constant which can differ from line to line. Recall that from our assumptions, 
\begin{equation}\label{eq:truc} \E\left[\sup_{t\in [0,T]} |Y_t|^r +\left(\int_0^T |Z_t|^2 dt\right)^{r/2} \right]<\infty, \quad \forall r\leq 2. 
\end{equation}

Denoting by $\bar{X}_t$ a random point between $X_t$ and $X_t\tr{\varepsilon h}$, where we suppressed the dependence on $\varepsilon$ for notational simplicity. We have for any $t$ in $[0,T]$, that 
\begin{align*}
&\E\left[\left|\varepsilon^{-1} (f(t,X_t\tr{\varepsilon h},Y_t,Z_t)-f(t,X_t,Y_t,Z_t)) - f_x(t,X_t,Y_t,Z_t) \langle DX_t, \dot{h}\rangle_{\h}\right|^p\right]\\
&=\E\left[\left|\frac{X_t\tr{\varepsilon h}-X_t}{\varepsilon} f_x(t,\bar{X}_t,Y_t,Z_t)- f_x(t,X_t,Y_t,Z_t) \langle DX_t, \dot{h}\rangle_{\h}\right|^p\right]\\
&\leq C \E\left[\left|\varepsilon^{-1} (X_t\tr{\varepsilon h}-X_t)-\langle DX_t, \dot{h}\rangle_{\h} \right|^p (1+|Y_t|^{\kappa p}+|Z_t|^{\kappa p}+|X_t|^{pq}+|X_t\tr{\varepsilon h}|^{pq})\right] \\
&\quad +C \E\left[\left| f_x(t,\bar{X}_t,Y_t,Z_t) -f_x(t,X_t,Y_t,Z_t)\right|^p \left|\langle DX_t, \dot{h}\rangle_{\h}\right|^p\right]\\
&\leq C \E\left[\left|\varepsilon^{-1} (X_t\tr{\varepsilon h}-X_t)-\langle DX_t, \dot{h}\rangle_{\h} \right|^{pr}\right]^{\frac1r}\E\left[(1+|Y_t|^{\kappa p}+|Z_t|^{\kappa p}+|X_t|^{pq}+|X_t\tr{\varepsilon h}|^{pq})^{\bar{r}}\right]^{\frac{1}{\bar{r}}} \\
&\quad +C \E\left[\left| f_x(t,\bar{X}_t,Y_t,Z_t) -f_x(t,X_t,Y_t,Z_t)\right|^p \left|\langle DX_t, \dot{h}\rangle_{\h}\right|^p\right]\\
&=:A_t^{1,\varepsilon} + A_t^{2,\varepsilon}, 
\end{align*}
where $\bar r>1$ and $p$ are chosen so that $p\kappa \bar r<2$ and $r$ denotes the H\"older conjugate of $\bar r$. Using the above estimates, we deduce
\begin{align*}
&\E\left[\left(\int_0^T\left|\varepsilon^{-1} (f(t,X_t\tr{\varepsilon h},Y_t,Z_t)-f(t,X_t,Y_t,Z_t)) - f_x(t,X_t,Y_t,Z_t) \langle DX_t, \dot{h}\rangle_{\h}\right|dt\right)^p\right]\\
&\leq \int_0^T\left(A_t^{1,\varepsilon} + A_t^{2,\varepsilon}\right)dt.
\end{align*}
Then, we have
\begin{align}
\label{eq:A1temp}
\int_0^TA_t^{1,\varepsilon}dt\leq &\ C\left(\int_0^T\E\left[\left|\varepsilon^{-1} (X_t\tr{\varepsilon h}-X_t)-\langle DX_t, \dot{h}\rangle_{\h} \right|^{pr}\right]^{2/r}dt\right)^{1/2}\nonumber\\
&\times\ \left(\int_0^T\E\left[(1+|Y_t|^{\kappa p}+|Z_t|^{\kappa p}+|X_t|^{pq}+|X_t\tr{\varepsilon h}|^{pq})^{\bar{r}}\right]^{2/\bar{r}}dt\right)^{1/2}.
\end{align}
In addition by Lemma \ref{lemma:shift2}, we have that $M^{\varepsilon,h}:=X \tr{\varepsilon h}-X$ is solution to the linear SDE:
$$ dM^{\varepsilon,h}_t = M^{\varepsilon,h}_t (b_x(t,\underline{X}_t) dt+ \sigma_x(t,\underline{X}_t) dW_t) + \varepsilon \sigma(t,X_t \tr{\varepsilon h}) \dot{h}_t dt,$$
where $\underline{X}_s$ denotes once again a random point between $X_s$ and $X_s\tr{\varepsilon h}$.  
Hence using Assumption $(A_1)$ and standard estimates for SDEs, we get that for any $q\geq1$, $$\lim_{\varepsilon \to 0} \E\left[\sup_{t\in [0,T]} |X_t\tr{\varepsilon h}-X_t|^q\right]=0.$$
Following the same lines as above, and recalling that $N^{h}:=\langle D X_t, \dot{h} \rangle_\h$ is solution to the SDE:
$$ dN^{h}_t = N^{h}_t (b_x(t,X_t) dt+ \sigma_x(t,X_t) dW_t) + \sigma(t,X_t) \dot{h}_t dt,$$
we get that the process $P^{\varepsilon,h}:=\varepsilon^{-1} (X \tr{\varepsilon h}-X) -\langle D X, \dot{h} \rangle_\h$ is solution to the affine SDE:
$$dP^{\varepsilon,h}_t=dH^\varepsilon_t+ P^{\varepsilon,h}_t (b_x(t,\underline{X}_t) dt +\sigma_x(t,\underline{X}_t) dW_t),$$
with 
\begin{align*}
dH^\varepsilon_t:=&\left(\langle D X_t, \dot{h} \rangle_{\h} (b_x(t,\underline{X}_t)-b_x(t,X_t)) + \dot{h}_t (\sigma(t,X_t\tr{\varepsilon h})-\sigma(t,X_t))\right)dt\\
&+\langle D X_t, \dot{h} \rangle_{\h} (\sigma_x(t,\underline{X}_t)-\sigma_x(t,X_t)) dW_t
\end{align*}
Using the fact that $\sigma_x, b_x$ are bounded, $\sigma$ has linear growth and is continuous, we get by similar computations than those done several times in this paper that:
$$ \lim_{\varepsilon \to 0}\E\left[\sup_{t\in [0,T]} |H^\varepsilon_t|^q\right]=0, \quad \forall q\geq 1, $$
from which we deduce using the explicit representation of solutions to affine SDEs (see e.g. \cite[Theorem V.9.53]{Protter}) that 
$$ \lim_{\varepsilon \to 0}\E\left[\sup_{t\in [0,T]} |\varepsilon^{-1} (X \tr{\varepsilon h}-X) -\langle D X_t, \dot{h} \rangle_\h|^q\right]=0, \quad \forall q\geq 1. $$
As a consequence, combining this estimate with \eqref{eq:A1temp}, we get that:
$$ \int_0^TA_t^{1,\varepsilon}dt\leq C\left(\E\left[\sup_{t\in [0,T]} \left|\varepsilon^{-1} (X_t\tr{\varepsilon h}-X_t)-\langle DX_t, \dot{h}\rangle_{\h} \right|^{pr}\right]^{2/r}\right)^{1/2},$$
which goes to $0$ as $\varepsilon$ goes to $0$, since we recall that we have chosen $p,\bar r>1$ so that $\kappa p \bar{r}<2$, which implies by \reff{eq:truc}, Lemma \ref{lem:dx} and the Cameron-Martin formula that
$$ \int_0^T\E\left[(1+|Y_t|^{\kappa p}+|Z_t|^{\kappa p}+|X_t|^{pq}+|X_t\tr{\varepsilon h}|^{pq})^{\bar{r}}\right]^{2/\bar{r}}dt<\infty. $$
Concerning the term $\int_0^T A_t^{2,\varepsilon}dt$, choosing $\tilde{p}>1$ so that $p\tilde{p}<2$, it holds by H\"older and by Jensen inequalities that
\begin{align*}
\int_0^T A_t^{2,\varepsilon}dt \leq C \left(\int_0^T \E\left[\left| f_x(t,\bar{X}_t,Y_t,Z_t) -f_x(t,X_t,Y_t,Z_t)\right|^{p\tilde{p}}\right] dt\right)^{1/\tilde{p}},
\end{align*}
since 
$$\E\left[\sup_{t\in [0,T]} \left|\langle DX_t, \dot{h}\rangle_{\h}\right|^q\right]<\infty, \quad \forall q>1.$$
As $$\lim_{\varepsilon \to 0} \E\left[\sup_{t\in [0,T]} |X_t\tr{\varepsilon h}-X_t|^q\right]=0, \quad \forall q \geq 1$$ it holds that 
$$\lim_{\varepsilon \to 0} \left| f_x(t,\bar{X}_t,Y_t,Z_t) -f_x(t,X_t,Y_t,Z_t)\right|^{p\tilde{p}}=0, \; \P_0\otimes dt-a.e.$$
Furthermore, for any $2>\rho>1$,
\begin{align*}
&\sup_{\varepsilon \in (0,1)} \int_0^T \E\left[\left| f_x(t,\bar{X}_t,Y_t,Z_t) -f_x(t,X_t,Y_t,Z_t)\right|^{\rho p\tilde{p}}\right] dt\\
&\leq C \sup_{\varepsilon \in (0,1)} \int_0^T \E\left[\left( 1 +|X_t|^q +|X_t \tr{\varepsilon h}|^q + |Y_t|^\kappa +|Z_t|^{\kappa} \right)^{\rho p\tilde{p}}\right] dt<\infty,
\end{align*} 
by choosing $p$ small enough so that $\kappa \rho p\tilde{p}\leq 2$. So by Lebesgue's dominated convergence theorem, 
$$ \lim_{\varepsilon \to 0} \int_0^T A_t^{2,\varepsilon}dt=0,$$
which proves ($H_1$). Concerning, ($H_2$) we just mention that $f_y$ (respectively $f_z$) is bounded, jointly continuous in $(x,y,z)$ and we make use of Lemma \ref{lemma:ctsH}.
\qed
\end{proof}

\subsection{Affine BSDEs}

The aim of this section is to prove that with our condition, we can provide weaker conditions compared to \cite{EPQ} for affine BSDEs. We take a driver of the form $$f(t,\omega,y,z):=\alpha_t(\omega) + \beta_t(\omega) y +\gamma_t(\omega) z$$ with bounded $\F$-progressively measurable processes such that $\alpha,\beta,\gamma\in L^2([0,T]; \mathbb D^{1,2})$, and $\xi$ in $\D^{1,2}$. The conditions given in \cite[Proposition 5.3]{EPQ} for proving that the associated solution $(Y,Z)$ is Malliavin differentiable read as follows (together with some measurability conditions): 
\begin{equation}
\label{eq:linsta}
\exists \eta>0 \textrm{ such that } \quad \E[|\xi|^{2+\eta}]<+\infty \quad \textrm{  \underline{and}  } \quad \int_0^T \E\left[\left(\int_\theta^T |K_\theta(s)|^2 ds\right)^{2+\eta}\right]^{1/(2+\eta)} d\theta<+\infty, 
\end{equation}
with $K_\theta(s):=|D_\theta \beta(s)|+|D_\theta \gamma(s)|$.

\vspace{0.5em}
In our setting, one needs to check assumptions (L), (D), ($H_1$) and ($H_2$). As mentioned below by Lemma \ref{lemma:ctsH} Condition ($H_2$) comes for free, and Assumptions (D) and (L) are also trivially satisfied. The interesting point is that ($H_1$) is true as soon as \eqref{eq:linsta} is replaced with: 
\begin{equation}
\label{eq:linstanous}
\exists \eta>0 \textrm{ such that } \lim_{\varepsilon \to 0} \int_0^T \E\left[\left|\varepsilon^{-1} (\mu_t\tr{\varepsilon h} -\mu_t)-\langle D\mu_t,\dot{h}\rangle_\h \right|^{2+\eta}\right] dt=0, \textrm{ for } \mu \in \{\beta,\gamma\}. 
\end{equation}   
Hence our condition only involves a condition on $\gamma$ and $\beta$ and not on $\xi$. For instance if $\beta$ and $\gamma$ are given as: 
$$ \beta_t=\varphi_1(X_t), \; \gamma_t:=\varphi_2(X_t), \quad t\in [0,T],$$
with $\varphi_1, \varphi_2$ two smooth functions with polynomial growth and $X$ is the solution to an SDE of the form of the one considered in Section \ref{section:FBSDE}, then the requirements of Conditions \eqref{eq:linsta} and \eqref{eq:linstanous} are satisfied for $\beta$ and $\gamma$, however in contradistinction to Condition \eqref{eq:linsta}, Assumption \eqref{eq:linstanous} does not put extra regularity on the terminal condition $\xi$.

\vspace{0.5em}
We make precise our result.
\begin{Proposition}
Let $\xi$ in $\D^{1,2}$, and $\alpha, \beta, \gamma$ bounded $\F$-progressively processes in $L^{2}([0,T];\D^{1,2})$ such that $D \alpha, D \beta$ and $D \gamma$ are $\F$-progressively measurable. Assume that Assumption \eqref{eq:linstanous} is in force. Then for any $t$ in $[0,T]$, $Y_t$ belongs to $\D^{1,2}$, $Z \in L^2([t,T];\D^{1,2})$ where $(Y,Z)$ is the unique solution in $\S^2\times \H^2$ to the affine BSDE:
$$ Y_t=\xi+\int_t^T (\alpha_s +\beta_s Y_s +\gamma_s Z_s) ds -\int_t^T Z_s dW_s, \quad t\in [0,T]. $$  
\end{Proposition}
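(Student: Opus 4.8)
The statement is a particular instance of Theorem \ref{thm:yzd12} with $n=d=1$ and the affine generator $f(t,\omega,y,z):=\alpha_t(\omega)+\beta_t(\omega)y+\gamma_t(\omega)z$, so the plan is to check that Assumptions $(L)$, $(D)$, $(H_1)$ and $(H_2)$ hold for this $f$ and then invoke that theorem. First I would dispose of the easy hypotheses. For $(L)$: $f_y=\beta$ and $f_z=\gamma$ do not depend on $(y,z)$, hence are continuous in those variables, and they are bounded by assumption. For $(D)$: $\xi\in\D^{1,2}$ is assumed, the process $(t,\omega)\longmapsto f(t,\omega,y,z)=\alpha_t+\beta_t y+\gamma_t z$ belongs to $L^2([0,T];\D^{1,2})$ because $\alpha,\beta,\gamma$ do, the progressive measurability of $f(\cdot,y,z)$ and of $D_\cdot f(\cdot,y,z)=D_\cdot\alpha+y\,D_\cdot\beta+z\,D_\cdot\gamma$ is part of the hypotheses, and the remaining requirement $\E\left[\int_0^T\|D_\cdot f(s,Y_s,Z_s)\|_\h^2\,ds\right]<+\infty$ follows from the boundedness of $\beta,\gamma$ together with the integrability of $(Y,Z)\in\S^2\times\H^2$ and the $\D^{1,2}$-assumptions on $\alpha,\beta,\gamma$. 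For $(H_2)$: since $f_y=\beta$ and $f_z=\gamma$ do not depend on $(y,z)$, the convergences in \eqref{assumpcts1} reduce to $\No{\beta(\cdot+\varepsilon_k h)-\beta}_{L^2([0,T])}\to 0$ and $\No{\gamma(\cdot+\varepsilon_k h)-\gamma}_{L^2([0,T])}\to 0$ in probability; by Lemma \ref{lemma:ctsH}, $\beta_t\circ\tau_{\varepsilon_k h}\to\beta_t$ in probability for a.e.\ $t$, so boundedness of $\beta$ and dominated convergence give $\E[\int_0^T|\beta_t\circ\tau_{\varepsilon_k h}-\beta_t|^2\,dt]\to0$, and similarly for $\gamma$.

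The real content is Assumption $(H_1)$. Writing, for $\mu\in\{\alpha,\beta,\gamma\}$, $R^{\mu,\varepsilon}_s:=\varepsilon^{-1}(\mu_s\circ\tau_{\varepsilon h}-\mu_s)-\langle D\mu_s,\dh\rangle_\h$, the affine structure gives that the difference quotient inside $(H_1)$ equals $R^{\alpha,\varepsilon}_s+Y_s\,R^{\beta,\varepsilon}_s+Z_s\,R^{\gamma,\varepsilon}_s$, so by the triangle inequality it suffices to show that, for some $p\in(1,2)$, each of $\E[(\int_0^T|R^{\alpha,\varepsilon}_s|\,ds)^p]$, $\E[(\int_0^T|Y_s|\,|R^{\beta,\varepsilon}_s|\,ds)^p]$ and $\E[(\int_0^T|Z_s|\,|R^{\gamma,\varepsilon}_s|\,ds)^p]$ tends to $0$ as $\varepsilon\to0$. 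The $\alpha$-term carries no multiplicative weight, so there $\alpha\in L^2([0,T];\D^{1,2})$ alone is enough: Lemma \ref{lemma:D12nec} gives $R^{\alpha,\varepsilon}_s\to0$ in $L^p(\real)$ for a.e.\ $s$, and the argument in its proof (Cauchy--Schwarz, the Cameron--Martin formula and \eqref{exp.int}) yields a bound $\E[|R^{\alpha,\varepsilon}_s|^{p+\eta}]\le C_h\,\E[\|D_\cdot\alpha_s\|_\h^2]^{(p+\eta)/2}$ uniform in $\varepsilon\in(0,1)$, whose time-integral is finite since $(p+\eta)/2<1$; Hölder in time and dominated convergence then close this term.

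The delicate terms are those for $\beta$ and $\gamma$, where the available integrability of the weights is only $Y\in\S^2$ and $Z\in\H^2$, so a direct application of Cauchy--Schwarz does not reach an exponent $p>1$. This is exactly where the reinforced assumption \eqref{eq:linstanous} is used: it provides control of $R^{\beta,\varepsilon}$ and $R^{\gamma,\varepsilon}$ in $L^{2+\eta}([0,T]\times\Omega)$ rather than in $L^2$. I would apply Hölder's inequality first in the time variable and then in $\omega$, with a pair of conjugate exponents $(c,c')$ tuned so that $pc\le2$ (to invoke $Y\in\S^2$, respectively $Z\in\H^2$) and $pc'\le2+\eta$ (to invoke \eqref{eq:linstanous}); these two constraints are simultaneously compatible precisely when $p\big(\tfrac12+\tfrac1{2+\eta}\big)\le1$, and since $\tfrac12+\tfrac1{2+\eta}<1$ for $\eta>0$ one can choose such a $p$, automatically lying in $(1,2)$. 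Since then $pc'/(2+\eta)\le1$, Jensen's inequality bounds $\E[(\int_0^T|R^{\beta,\varepsilon}_s|^{2+\eta}\,ds)^{pc'/(2+\eta)}]$ by a power of $\int_0^T\E[|R^{\beta,\varepsilon}_s|^{2+\eta}]\,ds$, which vanishes as $\varepsilon\to0$ by \eqref{eq:linstanous} applied to $\mu=\beta$; the $\gamma$-term is identical. This establishes $(H_1)$, and Theorem \ref{thm:yzd12} then yields $Y_t\in\D^{1,2}$ and $Z\in L^2([t,T];\D^{1,2})$.

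I expect this last interpolation — trading the $\S^2/\H^2$ regularity of $(Y,Z)$ against the $L^{2+\eta}$-control of the difference quotients of $\beta$ and $\gamma$ supplied by \eqref{eq:linstanous}, and checking that the arithmetic of the Hölder exponents still leaves room for $p>1$ — to be the only genuine obstacle; the verification of $(L)$, $(D)$ and $(H_2)$, and the treatment of the $\alpha$-term, are routine.
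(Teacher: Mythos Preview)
Your proposal is correct and follows essentially the same route as the paper: reduce to Theorem \ref{thm:yzd12} by checking $(L)$, $(D)$, $(H_2)$ (all routine here) and then establishing $(H_1)$ via the three-term splitting $R^{\alpha,\varepsilon}+Y\,R^{\beta,\varepsilon}+Z\,R^{\gamma,\varepsilon}$. The paper handles the $\alpha$-term by quoting Lemma \ref{lemma:L12}, whereas you reprove its content via Lemma \ref{lemma:D12nec} plus dominated convergence; and for the $\beta$- and $\gamma$-terms the paper applies Jensen to pull the exponent $p$ inside the time integral, then H\"older in $\omega$ with exponents $2/p$ and $2/(2-p)$, then H\"older in time, choosing $p$ so that $2p/(2-p)=2+\eta$, while you phrase the same interpolation with abstract conjugate exponents $(c,c')$ subject to $pc\le 2$, $pc'\le 2+\eta$ --- these constraints are equivalent (your compatibility condition $p(\tfrac12+\tfrac{1}{2+\eta})\le 1$ is exactly $p\le 2(2+\eta)/(4+\eta)$, the paper's choice). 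So the arguments coincide up to the order in which the H\"older inequalities are applied.
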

 
\begin{proof}
Once again we check that assumptions of Theorem \ref{th:newcarD} are in force. Properties (D) and (L) are immediately satisfied. Let $f(t,\omega,y,z):=\alpha_t(\omega) + \beta_t(\omega) y +\gamma_t(\omega) z$. Since $f_y(t,\omega,y,z)=\beta_t(\omega)$, and $f_z(t,\omega,y,z)=\gamma_t(\omega)$ we immediately get by Lemma \ref{lemma:ctsH} and since $\beta,\gamma$ are bounded that ($H_2$) is satisfied. Concerning ($H_1$), we have for any $1<p<2$ and $h$ in $H$, that 
\begin{align}
\label{eq:templin}
&\E\left[\left(\int_0^T |\varepsilon^{-1} (f(t,\cdot+\varepsilon h,Y_t,Z_t)-f(t,\cdot,Y_t,Z_t)) - \langle Df(t,\cdot,Y_t,Z_t),\dot{h}\rangle_{\h}| dt\right)^p\right]\nonumber\\
&\leq C \E\left[\left(\int_0^T |\varepsilon^{-1} (\alpha_t\tr{\varepsilon h} -\alpha_t)-\langle D\alpha_t,\dot{h}\rangle_{\h}| dt\right)^p\right]\nonumber\\
&\quad+ C\E\left[\left(\int_0^T |Y_t (\varepsilon^{-1} (\beta_t\tr{\varepsilon h} -\beta_t)-\langle D\beta_t,\dot{h}\rangle_{\h})| dt\right)^p \right]\nonumber\\
&\quad+ C\E\left[\left( \int_0^T |Z_t (\varepsilon^{-1} (\gamma_t\tr{\varepsilon h} -\gamma_t)-\langle D\gamma_t,\dot{h}\rangle_{\h})| dt\right)^p\right]\nonumber\\
&=:A_1^\varepsilon+A_2^\varepsilon+A_3^\varepsilon,
\end{align}
where $C$ is a constant. By Lemma \ref{lemma:L12} we have that $\lim_{\varepsilon \to 0} A_1^\varepsilon=0$. 
We consider the term $A_3^\varepsilon$. We have that:
\begin{align*}
&A^\varepsilon_3 \leq \E\left[\int_0^T \left| Z_t (\varepsilon^{-1} (\gamma_t\tr{\varepsilon h} -\gamma_t)-\langle D\gamma_t,\dot{h}\rangle_{\h}) \right|^p dt\right]\\
&\leq C\int_0^T \E\left[|Z_t|^{2}\right]^{p/2} \E\left[\left|\varepsilon^{-1} (\gamma_t\tr{\varepsilon h} -\gamma_t)-\langle D\gamma_t,\dot{h}\rangle_\h \right|^{\frac{2p}{2-p}}\right]^{\frac{2-p}{2}} dt\\
&\leq C\left(\int_0^T \E\left[|Z_t|^{2}\right] dt\right)^{p/2} \left(\int_0^T \E\left[\left|\varepsilon^{-1} (\gamma_t\tr{\varepsilon h} -\gamma_t)-\langle D\gamma_t,\dot{h}\rangle_\h \right|^{\frac{2p}{2-p}}\right] dt\right)^{\frac{2-p}{2}}.
\end{align*}
Choosing $p$ such that $\frac{2p}{2-p}=2+\eta$ we get that $A_3^\varepsilon$ converges to $0$ as $\varepsilon$ tends to $0$ by \eqref{eq:linstanous}. Similarly, $\lim_{\varepsilon \to 0} A_2^\varepsilon=0$ for this choice of $p$.
\qed
\end{proof} 

\begin{Remark}
Note that, since the BSDE is affine, $Y_t$ can be expressed explicitly as:
$$ Y_t=\E\left[\left. M_{t,T} \xi -\int_t^TM_{t,s} \alpha_s ds \right|\mathcal{F}_t\right],$$
where
$$M_{t,s}:=\exp\left(\int_t^s \gamma_u dW_u -\frac12 \int_t^s |\gamma_u|^2 du +\int_t^s \beta_u du\right),\ s\in[t,T].$$
Hence, on the one hand, $Y_t$ belongs to $\D^{1,2}$ if and only if the coefficients $\alpha, \beta, \gamma$ belong to $L^2([0,T];\D^{1,2})$ and $\xi$ is in $\D^{1,2}$. The same conclusion follows for the $Z$ component. Hence, neither our condition \eqref{eq:linstanous} nor the one of \cite{EPQ}, namely \eqref{eq:linsta}, are sharp. However, both are sharp in the case where $\beta=\gamma=0$. On the other hand, Conditions \eqref{eq:linsta} or \eqref{eq:linstanous} give more information than the simple fact that $Y, Z$ are Malliavin differentiable, since they imply that the BSDE solved by $(DY,DZ)$ is limit in $\S^2\times \H^2$ of respectively $(DY^n,DZ^n)$ $($where $(Y^n,Z^n)$ is the solution to the Picard iteration equation at order $n$ approximating $(Y,Z))$ for \eqref{eq:linsta}, and of the difference quotient $(\varepsilon^{-1}(Y\tr{\varepsilon h}-Y),\varepsilon^{-1}(Z\tr{\varepsilon h}-Z))$ in our case \eqref{eq:linstanous}.   
\end{Remark}

\subsection{Discussion and comparison of the results}
\label{section:discu}

We would like before going to the quadratic BSDE case to make a comment about the difference between our approach and the one of \cite{pardouxpeng,EPQ} and our approach. In these references, the authors consider the sequence of BSDEs:
$$ Y_t^n =\xi+\int_t^T f(s,Y_s^{n-1},Z_s^{n-1}) ds -\int_t^T Z_s^n dW_s, \quad t\in [0,T], $$
which approximate in $\S^2\times \H^2$ the solution to the original BSDE:
$$ Y_t =\xi+\int_t^T f(s,Y_s,Z_s) ds -\int_t^T Z_s dW_s, \quad t\in [0,T].$$
Now, under mild assumptions on $f$, the processes $(Y^n,Z^n)$ are Malliavin differentiable and it holds that a version of $(D_r Y_t^n,D_r Z_t^n)$ satisfies for $t\in [0,T], \; r\leq t$:
\begin{align*}
 D_r Y_t^n =&\ D_r\xi+\int_t^T[ D_rf(s,\Theta_s^{n-1}) + \partial_y f(s,\Theta_s^{n-1}) D_rY_s^{n-1}+\partial_z f(s,\Theta_s^{n-1}) D_r Z_s^{n-1}]ds \\
 &-\int_t^T D_rZ_s^n dW_s,
 \end{align*}
with $\Theta_s^{n-1}:=(Y_s^{n-1},Z_s^{n-1})$.
On the other if $(Y,Z)$ where Malliavin differentiable we would have that a version of $(D_r Y_t,D_r Z_t)$ would satisfy for $t\in [0,T], \; r\leq t$:
$$ D_r Y_t =D_r\xi+\int_t^T[ D_rf(s,Y_s,Z_s) + \partial_y f(s,Y_s,Z_s) D_rY_s+\partial_z f(s,Y_s,Z_s) D_r Z_s]ds -\int_t^T D_rZ_s dW_s. $$
In other words, assuming that $\partial_y f$ and $\partial_zf$ to be continuous, we would get formally that $(DY^n,DZ^n)$ converges to $(DY,DZ)$ (in $\S^2\times\H^2$) as $n$ goes to infinity provided that at the limit one can replace $D_rf(s,Y_s^{n-1},Z_s^{n-1})$ by $D_r f(s,Y_s,Z_s)$ which is exactly where comes the main assumption in \cite{EPQ,pardouxpeng} which impose $D_r f$ to be (stochastic) Lipschitz continuous in $(y,z)$ with integrability conditions on the Lipschitz constant to make the aforementioned argument rigorous. However, it is not a necessary condition for $(Y,Z)$ to be Malliavin differentiable that $(DY^n,DZ^n)$ to converge to $(DY,DZ)$. However, for $Y_t$ to be in $\D^{1,2}$, it is necessary (and sufficient) that $\varepsilon^{-1} (Y_t\tr{\varepsilon h}-Y_t)$ converges in $L^p$ for some $p<2$ to $\langle DY_t,\dot{h}\rangle_\h$ for any $h$ in $H$ (according to Theorem \ref{th:newcarD}). Hence, this is an advantage of our conditions.   

\section{Extension to quadratic growth BSDEs}
\label{section:quadratic}
The aim of this section is to extend our previous results to so-called quadratic growth BSDEs. Some results for these equations already exist in the literature, see in particular \cite{ank,irr} or the thesis \cite{dosreis}, however they are generally limited to specific forms of the generators or to a Markovian setting. We will show that our approach to the Malliavin differentiability is flexible enough to be able to treat this problem without major modifications to our proofs. Since the wellposedness theory for multidimensional quadratic BSDEs is still an open problem, we enforce $n=1$ throughout this section.

\vspace{0.5em}
We will now list our assumptions in this quadratic setting
 \begin{itemize}
\item[($D_\infty$)] $\xi$ is bounded, belongs to $\D^{1,\infty}$ and its Malliavin derivative $D\xi$ is bounded, for any $(y,z)\in\R\times \R^d$, $(t,\omega)\longmapsto f(t,\omega,y,z)$ is in $L^2([0,T];\D^{1,\infty})$, $f(\cdot,y,z)$ and $Df(\cdot,y,z)$ are $\F$-progressively measurable, $Df(\cdot,y,z)$ is uniformly bounded in $y,z$.
\item[(Q)] The map $(y,z)\longmapsto f(\cdot,y,z)$ is continuously differentiable and there exists some constant $C>0$  such that for any $(s,\omega,y,z,z')\in[0,T]\times\Omega\times \R\times \R^d\times \R^d$
$$\abs{f(s,\omega,y,z)-f(s,\omega,y,z')}\leq C\left(1+\|z\|+\|z'\|\right)\|z-z'\|,\ \abs{f(s,\omega,0,0)}\leq C,$$
$$\abs{f_y(s,\omega,y,z)}\leq C,\  \|f_z(s,\omega,y,z)\|\leq C(1+\|z\|),$$
where $f_z=\left( \frac{\partial f}{\partial z_{l}}\right)_{l \in \{1,\ldots, d\}}$ denotes the gradient of $f$ with respect to the $z$ variable.
\item[($H_{1,\infty}$)] For any $p>1$ and for any $h\in H$
$$\lim\limits_{\varepsilon \to 0}\ \E\left[ \left(\int_0^T\left|\frac{f(s,\cdot+\varepsilon h, Y_s, Z_s)-f(s,\cdot,Y_s,Z_s)}{\varepsilon}-\langle Df(s,\cdot,Y_s,Z_s),\dh\rangle_{\h}\right| ds\right)^p\right]=0.$$
\item[($H_{2,\infty}$)] Let $(\varepsilon_k)_{k\in \N}$ be a sequence in $(0,1]$ such that $\lim\limits_{k\to +\infty} \varepsilon_k=0$, and let  $(Y^k,Z^k)_k$ be a sequence of random variables which converges in $\S^p\times \H_d^p$ for any $p>1$ to some $(Y,Z)$. Then for all $h\in H$, the following convergences hold in probability
\begin{align}
\label{assumpcts1bis}
&\No{f_y(\cdot,\omega+\varepsilon_k h,Y^k_\cdot,Z_\cdot) -f_y(\cdot,\omega,Y_\cdot,Z_\cdot)}_{L^2([0,T];\R)}\underset{k\to+\infty}{\longrightarrow}0 \nonumber\\
& \No{f_z(\cdot,\omega+\varepsilon_k h,Y^k_\cdot,Z^k_\cdot)-  f_z(\cdot,\omega,Y_\cdot,Z_\cdot)}_{L^2([0,T];\R^d)}\underset{n\to+\infty}{\longrightarrow}0,\end{align}
or
\begin{align}
\label{assumpcts2bis}
\No{f_y(\cdot,\omega+\varepsilon_k h,Y^k_\cdot,Z^k_\cdot) -f_y(\cdot,\omega,Y_\cdot,Z_\cdot)}_{L^2([0,T];\R)}\underset{k\to +\infty}{\longrightarrow}0\nonumber\\
 \No{f_z(\cdot,\omega+\varepsilon_k h,Y_\cdot,Z^k_\cdot)   -f_z(\cdot,\omega,Y_\cdot,Z_\cdot)}_{L^2([0,T];\R^d)}\underset{k\to +\infty}{\longrightarrow}0.
\end{align}
\end{itemize}

Let $\S^\infty$ be the set of $\F$-progressively measurable processes $Y$ such that $\sup_{t\in [0,T]} |Y_t|$ is bounded and $\H^2_{\rm{BMO}}$ the set of $\R^d$-valued predictable processes $Z$ such that:
$$ \underset{\tau \in \mathcal{T}}{\rm essup}\ \E\left[\left.\int_\tau^T \|Z_s\|^2 ds \right| \mathcal{F}_\tau\right]<+\infty, \ \P_0-a.s.,$$
where $\mathcal{T}$ denotes the set of $\F$-stopping times with values in $[0,T]$. We start by recalling the following by now classical results on quadratic growth BSDEs and stochastic Lipschitz BSDEs, which can be found among others in \cite{irr}.
\begin{Proposition}\label{prop:BMO}
Under Assumptions $(D_\infty)$ and $(Q)$, the BSDEs \reff{bsde} and \reff{eq:DY,h} both admit a unique solution in $\S^\infty\times\H^2_{\rm{BMO}}$. 
\end{Proposition}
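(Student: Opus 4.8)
The plan is to reduce each of the two statements to an existing well-posedness theorem from the theory of quadratic and stochastic-Lipschitz BSDEs (as developed, among others, in \cite{irr}) by matching Assumptions $(D_\infty)$ and $(Q)$ to the hypotheses required there, rather than redoing the constructions.

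For the BSDE \eqref{bsde}: Assumption $(D_\infty)$ supplies a bounded terminal condition $\xi$, while $(Q)$ supplies $\abs{f(s,\omega,0,0)}\le C$, $\abs{f_y}\le C$ and the local-Lipschitz estimate $\abs{f(s,\omega,y,z)-f(s,\omega,y,z')}\le C(1+\|z\|+\|z'\|)\|z-z'\|$, that is, $f$ has quadratic growth in $z$, Lipschitz dependence on $y$, and is differentiable. This is exactly the framework of the classical Kobylanski-type existence and uniqueness theorem and its refinements, for which I would cite the corresponding statement in \cite{irr}. Existence produces $(Y,Z)$ with $Y\in\S^\infty$ (via the a priori bound $\|Y\|_{\S^\infty}\le\|\xi\|_\infty+CT$ obtained by comparison), and, applying It\^o's formula to $e^{\beta Y}$ for $\beta$ large enough and using the boundedness of $Y$ to absorb the quadratic term, one controls $\esup_{\tau}\E\big[\int_\tau^T\|Z_s\|^2\,ds\,\big|\,\mathcal F_\tau\big]$, so that $Z\in\H^2_{\rm{BMO}}$. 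Uniqueness in $\S^\infty\times\H^2_{\rm{BMO}}$ is the delicate part of Kobylanski's argument, relying precisely on the local-Lipschitz structure of $(Q)$; I would simply invoke it.

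For the linear BSDE \eqref{eq:DY,h} (here $n=1$, and $\varepsilon>0$, $h\in H$ are fixed), I would rewrite it as
\[
\hat Y_s=\zeta+\int_s^T\big(\phi_r+a_r\hat Y_r+b_r\cdot\hat Z_r\big)\,dr-\int_s^T\hat Z_r\cdot dW_r,
\]
with $\zeta:=\langle\nabla\xi,h\rangle_H$, $\phi_r:=\langle\nabla f(r,\cdot,Y_r,Z_r),h\rangle_H$, $a_r:=f_y(r,\cdot,Y_r,Z_r)$ and $b_r:=f_z(r,\cdot+\varepsilon h,Y_r,Z_r)$. The decisive observations are: $\zeta$ and $\phi$ are bounded, since $\langle\nabla\xi,h\rangle_H=\langle D_\cdot\xi,\dot h\rangle_\h$ and $\langle\nabla f,h\rangle_H=\langle D_\cdot f,\dot h\rangle_\h$ with $D\xi$, $Df$ bounded by $(D_\infty)$ and $\dot h\in\h$; $a$ is bounded by $(Q)$; and, crucially, $b\in\H^2_{\rm{BMO}}$, because $\|f_z(r,\cdot+\varepsilon h,Y_r,Z_r)\|\le C(1+\|Z_r\|)$ by $(Q)$ (the $z$-argument here being the non-shifted $Z_r$) and $Z\in\H^2_{\rm{BMO}}$ from the first part. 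A linear BSDE whose zeroth- and first-order coefficients are bounded and whose $Z$-coefficient is a BMO process is well-posed in $\S^\infty\times\H^2_{\rm{BMO}}$: since $b\in\H^2_{\rm{BMO}}$, $\mathcal E\big(\int_0^\cdot b_r\cdot dW_r\big)$ is a uniformly integrable martingale defining an equivalent probability $\Q$ under which $W^\Q:=W-\int_0^\cdot b_r\,dr$ is a Brownian motion; the equation becomes linear with bounded data under $\Q$, so its unique solution $\hat Y$ admits a $\Q$-conditional-expectation representation and is bounded, while the reverse H\"older inequality for BMO martingales keeps $\hat Z$ in $\H^2_{\rm{BMO}}$ under $\P_0$. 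I would cite the relevant statement in \cite{irr} rather than carry this out in detail.

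The only genuine subtlety is organisational: the two parts must be performed in this order, since the $\H^2_{\rm{BMO}}$-membership of the $\hat Z$-coefficient $b$ in \eqref{eq:DY,h} is inherited from $Z\in\H^2_{\rm{BMO}}$, which is available only after the first part; and one should note that the linear-growth bound on $f_z$ is used at the non-shifted argument $Z_r$, so that no control of $Z\circ\tau_{\varepsilon h}$ in $\H^2_{\rm{BMO}}$ is needed. Everything else is a direct appeal to the classical quadratic and stochastic-Lipschitz BSDE theory.
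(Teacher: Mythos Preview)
Your proposal is correct and matches the paper's own treatment: the paper does not give a proof of this proposition at all, but merely states it as a recollection of ``by now classical results on quadratic growth BSDEs and stochastic Lipschitz BSDEs, which can be found among others in \cite{irr}''. Your write-up simply unpacks what that citation covers --- checking that $(D_\infty)$ and $(Q)$ place \eqref{bsde} in the Kobylanski framework, and that \eqref{eq:DY,h} is a linear BSDE with bounded data and a BMO $z$-coefficient (inherited from $Z\in\H^2_{\rm BMO}$ via the bound $\|f_z\|\le C(1+\|z\|)$) --- which is exactly the verification the paper leaves implicit and later relies on in the proof of Theorem~\ref{thm:yzd12quad}.
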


We have the following extension of Theorem \ref{thm:yzd12}.
\begin{Theorem}\label{thm:yzd12quad}
Let $t$ be in $[0,T]$. Under Assumptions $(D_\infty)$, $(Q)$, $(H_{1,\infty})$ and $(H_{2,\infty})$, $Y_t$ belongs to $ \D^{1,\infty}$ and $Z \in L^\infty([t,T];(\D^{1,2})^d)$.
\end{Theorem}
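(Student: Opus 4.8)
The plan is to follow the proof of Theorem \ref{thm:yzd12} essentially line by line, the only substantial change being that the $L^p$ a priori estimates for Lipschitz BSDEs used there must be replaced by the corresponding stability estimates for linear BSDEs with ${\rm BMO}$ coefficients, which are available in this setting thanks to Proposition \ref{prop:BMO} and Assumption $(Q)$ (and we work with the first alternative in $(H_{2,\infty})$, the second being analogous). Fix $t\in[0,T]$ and $h\in H$; by Lemma \ref{lemma:shift1} I may assume $\dot h_s=0$ for $s>t$, so that by Lemmas \ref{lemma:UstunelZakai} and \ref{lemma:shift2} the shifted pair $(Y\tr{\varepsilon h},Z\tr{\varepsilon h})$ solves on $[t,T]$ the BSDE with data $(\xi\tr{\varepsilon h},\,f(r,\cdot+\varepsilon h,y,z))$. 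Since the structural bounds in $(Q)$ and the bound on $\xi$ hold for every $\omega$, the classical a priori estimates for quadratic BSDEs give that $\|Y\tr{\varepsilon h}\|_{\S^\infty}$ and $\|Z\tr{\varepsilon h}\|_{\H^2_{\rm BMO}}$ are bounded uniformly in $\varepsilon\in(0,1]$. Introducing then the difference quotients $Y^\varepsilon,Z^\varepsilon,\xi^\varepsilon$ as in \eqref{edsr_accroissement} and linearising $f$ by the mean value theorem in $y$ and in $z$, one obtains that $(Y^\varepsilon,Z^\varepsilon)$ solves an affine BSDE whose $y$-coefficient is bounded by $(Q)$ and whose $z$-coefficient $\gamma^\varepsilon$ satisfies $\|\gamma^\varepsilon_r\|\le C(1+\|Z_r\|+\|Z_r\tr{\varepsilon h}\|)$, so that $\int_0^\cdot\gamma^\varepsilon_r\cdot dW_r$ is a ${\rm BMO}$ martingale whose norm is, by the previous step, bounded uniformly in $\varepsilon$.

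The candidate for the derivative is the solution $(\hat Y^h,\hat Z^h)$ of the linear BSDE \eqref{eq:DY,h} specialised to $n=1$, which by Proposition \ref{prop:BMO} is well posed in $\S^\infty\times\H^2_{\rm BMO}$. I would then establish the a priori estimate for $(Y^\varepsilon-\hat Y^h,Z^\varepsilon-\hat Z^h)$: since the $z$-coefficients of both equations are ${\rm BMO}$ with uniformly controlled norms, the reverse H\"older (energy) inequality for stochastic exponentials of ${\rm BMO}$ martingales yields, for every $p>1$, a bound of the form
\begin{equation*}
\E\!\left[\sup_{s\in[t,T]}|Y^\varepsilon_s-\hat Y^h_s|^p\right]+\E\!\left[\left(\int_t^T\|Z^\varepsilon_r-\hat Z^h_r\|^2\,dr\right)^{p/2}\right]\le C_p\left(\E\!\left[|\xi^\varepsilon-\langle\nabla\xi,h\rangle_H|^p\right]+\Xi^\varepsilon\right),
\end{equation*}
with $C_p$ independent of $\varepsilon$, where $\Xi^\varepsilon$ collects the $L^p$-norm of the generator increment $\tilde A^\varepsilon-\langle\nabla f(\cdot,Y,Z),h\rangle_H$ together with terms of the type $\|f_y(\cdot,\cdot+\varepsilon h,\bar Y^\varepsilon,Z)-f_y(\cdot,Y,Z)\|\,|\hat Y^h|$ and $\|\gamma^\varepsilon-f_z(\cdot,Y,Z)\|\,\|\hat Z^h\|$. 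The terminal term tends to $0$ because $\xi\in\D^{1,\infty}$ with bounded $D\xi$ (Lemma \ref{lemma:D12nec}), the generator increment tends to $0$ by $(H_{1,\infty})$, and the last two terms tend to $0$ by $(H_{2,\infty})$, the bounds $|f_y|\le C$ and $\|f_z\|\le C(1+\|z\|)$, and dominated convergence, once $(Y^\varepsilon,Z^\varepsilon)\to(\hat Y^h,\hat Z^h)$ in $\S^q\times\H_d^q$ for some $q>1$ — which the very same estimate provides with a smaller exponent, in the bootstrap already carried out in the proof of Theorem \ref{thm:yzd12}. Hence $Y^\varepsilon_t\to\hat Y^h_t$ in $L^p$ for every $p$.

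It remains to check that $h\mapsto\hat Y^h_t$ is a random element of $L^p(H)$ for every $p$, so that Theorem \ref{th:newcarD} applies at each exponent and gives $Y_t\in\D^{1,p}$ for all $p$, i.e. $Y_t\in\D^{1,\infty}$. Choosing an orthonormal basis $(h_k)_k$ of $H$ and setting $\mathcal{D}Y_t:=\sum_k\hat Y^{h_k}_t h_k$, $\mathcal{D}Z^\ell_s:=\sum_k(\hat Z^{h_k}_s)^\ell h_k$, the finiteness of $\E\!\left[\|\mathcal{D}Y_t\|_H^p+\int_t^T\|\mathcal{D}Z_s\|_H^p\,ds\right]$ follows from the ${\rm BMO}$ a priori estimates together with the boundedness of $D\xi$ and $Df$, exactly as in \eqref{eq:finitness}, and the identifications $\hat Y^h_t=\langle\mathcal{D}Y_t,h\rangle_H$ and $\hat Z^h\mathbf{1}_{[t,T]}=\langle\mathcal{D}Z,h\rangle_H$ come from the stochastic Fubini argument used there. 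Finally, running the same reasoning with $\int_t^T Z_s\,dW_s$ in place of $Y_t$ (which sidesteps evaluating $Z$ at a fixed time) shows this random variable belongs to $\D^{1,\infty}$, and \cite[Lemma 2.3]{pardouxpeng} then upgrades this to $Z\in L^\infty([t,T];(\D^{1,2})^d)$; the resulting pair $(DY,DZ)$ solves the affine BSDE \eqref{eq:DY} with $n=1$.

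\textbf{The main obstacle.} The real work is the passage from the $L^p$ stability theory for Lipschitz BSDEs used in Theorem \ref{thm:yzd12} to one for linear BSDEs whose $z$-coefficient is merely ${\rm BMO}$: this calls for the John--Nirenberg / reverse H\"older machinery, and — more delicately — for a control, uniform in $\varepsilon$, of the ${\rm BMO}$ norms of $Z\tr{\varepsilon h}$ and hence of $\gamma^\varepsilon$, which is what renders the constants $C_p$ above independent of $\varepsilon$ and legitimises the limit $\varepsilon\to0$; some care is also needed with the integrability exponents appearing in the ${\rm BMO}$ estimates when proving $\mathcal{D}Y_t\in L^p(H)$. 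Once these points are settled, everything else is a routine transcription of the Lipschitz argument.
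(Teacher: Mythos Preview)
Your proposal is correct and follows essentially the same route as the paper: linearise, invoke the ${\rm BMO}$-based stability estimate (the paper cites Lemma~A.1 of \cite{irr}, which gives for every $p>1$ some $q>1$ and a constant $C_p$ independent of $\varepsilon$), dispatch the four right-hand terms exactly as you describe, and then identify $\hat Y^h$ with $\langle\mathcal{D}Y,h\rangle_H$ via the orthonormal-basis/Fubini argument. Your emphasis on the uniform-in-$\varepsilon$ control of the ${\rm BMO}$ norm of $Z\tr{\varepsilon h}$ is well placed; the paper handles this point slightly differently, by bounding $\E\big[(\int_0^T(1+\|Z_s\|+\|Z_s\tr{\varepsilon h}\|)^2\,ds)^{pq+\eta}\big]$ through the Cameron--Martin formula rather than by first establishing a ${\rm BMO}$ bound on the shifted process, but the effect is the same.

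There is one small gap at the very end. The lemma \cite[Lemma~2.3]{pardouxpeng} only yields $Z\in L^2([t,T];(\D^{1,2})^d)$ from $\int_t^T Z_s\,dW_s\in\D^{1,2}$; it does not by itself give the $L^\infty$-in-time statement, and knowing $\int_t^T Z_s\,dW_s\in\D^{1,\infty}$ does not obviously upgrade this. The paper closes this step differently: once $(DY,DZ)$ is identified as the solution to the linear BSDE \eqref{eq:DY}, Assumptions $(D_\infty)$ and $(Q)$ make its data bounded with ${\rm BMO}$ $z$-coefficient, so by Proposition~\ref{prop:BMO} one has $D_\cdot Y\in\S^\infty$ uniformly in the Malliavin parameter; the standard relation $Z_t=D_tY_t$ then gives $Z\in L^\infty([t,T];(\D^{1,2})^d)$. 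You should replace your final sentence by this argument.
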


\begin{proof}
We follow the proof of Theorem \reff{thm:yzd12}, using the same notations. Since the BSDEs are now quadratic, we can use the {\it a priori} estimates of Lemma $A.1$ in \cite{irr} to obtain that for any $p>1$, there exists some $q>1$ such that
\begin{align}
\label{eq:apriori1quad}
&\E\left[\sup_{s\in [t,T]} |Y_s^\varepsilon-\hat{Y}^h_s|^{2p}\right] + \E\left[\left(\int_t^T \|Z_s^\varepsilon-\hat{Z}^h_s\|^2 ds\right)^{p}\right]\nonumber\\
&\leq C_p \left(\E\left[|\xi^\varepsilon-\langle D\xi,\dh\rangle_{\h}|^{pq}\right]^{1/q} +\E\left[\left(\int_0^T \tilde{A}_s^\varepsilon-\langle Df(s,\cdot,Y_s,Z_s),\dh\rangle_{\h} ds \right)^{pq}\right]^{1/q}\right) \nonumber\\
\nonumber&\hspace{0.8em}+C_p\E\left[\left(\int_0^T \abs{\tilde{A}_s^{y,\varepsilon}-f_y(s,\cdot,Y_s,Z_s)} \abs{\hat{Y}_s^h}ds\right)^{pq}\right]^{1/q}\\
&\hspace{0.8em}+C_p\E\left[\left(\int_0^T \|\tilde{A}_s^{z,\varepsilon}-f_z(s,\cdot,Y_s,Z_s)\|\|\hat{Z}_s^h\|ds\right)^{pq} \right]^{1/q},
\end{align}
where we set
\begin{align*}
    &\tilde{A}_s^{y,\varepsilon}:=f_y(s,\cdot+\varepsilon h, \bar{Y}_s^{\varepsilon,h}, Z_s)\\
  & \tilde{A}_s^\varepsilon:=\frac1\varepsilon (f(r,\cdot+\varepsilon h, Y_s, Z_s)-f(s,\cdot,Y_s,Z_s)),
 \end{align*}
where $\bar{Y}_r^{\varepsilon,h}$ is a convex combination of $Y_r$ and $Y_r\circ\tau_{\varepsilon h}$ and 
$$\tilde{A}_s^{z,\varepsilon}:=((\tilde{A}_s^{z,\varepsilon})^k)_{k\in \{1,\ldots,d\}}$$ with $(\tilde{A}_s^{z,\varepsilon})^k:=f_z(s,\cdot+\varepsilon h, Y_s \tr{\varepsilon h}, \widetilde Z^k_r)$, and $\widetilde Z^k_r$ is as in the proof of Theorem \reff{thm:yzd12}.

\vspace{0.5em}
Since $\xi\in\D^{1,\infty}$, the first term on the right-hand side above goes to $0$ thanks to Theorem \ref{th:newcarD}. Moreover, the second term also goes to $0$ thanks to Assumption $(H_\infty)$. Then, since $f_y$ is bounded by Assumption $(Q)$ and since $\tilde Y^h\in \S^\infty$ by Proposition \ref{prop:BMO}, we can easily conclude with Assumption $(H_2)$ and the dominated convergence theorem that the third term on the right-hand side also goes to $0$. Let us now concentrate on the fourth term involving the control variable. By Cauchy-Schwarz inequality we have that
\begin{align}
\label{eq:quadratemp1}
&\E\left[\left(\int_0^T \|\tilde{A}_s^{z,\varepsilon}-f_z(s,\cdot,Y_s,Z_s)\|\|\hat{Z}_s^h\|ds\right)^{pq} \right]\nonumber\\
&\leq \E\left[\left(\int_0^T \|\tilde{A}_s^{z,\varepsilon}-f_z(s,\cdot,Y_s,Z_s)\|^2 ds\right)^{pq}\right]^{1/2} \E\left[\left(\int_0^T \|\widetilde{Z}_s^h\|^2 ds\right)^{pq} \right]^{1/2}.
\end{align}
Since $(\hat{Y}^h,\hat{Z}^h)$ is the solution to the stochastic linear BSDE \eqref{eq:DY,h} with bounded coefficients $Df$ and $f_y$ (by $(D_\infty)$) and $f_z(s,Y_s,Z_s)$ is in $\H^2_{\rm{BMO}}$ since $\|f_z(s,Y_s,Z_s)\|\leq C(1+\|Z_s\|)$ (by Assumption $(Q)$), we deduce that $\hat{Z}^h\in \H^2_{\rm{BMO}}$ which implies that $\hat{Z}^h \in \H_d^m$ for any $m>1$ by the energy inequalities. Furthermore, for any $\eta>0$ it holds that
\begin{align*}
&\E\left[\left(\int_0^T \|\tilde{A}_s^{z,\varepsilon}-f_z(s,\cdot,Y_s,Z_s)\| \|\hat{Z}_s^h\| ds\right)^{pq+\eta} \right]\\
 & \leq C\E\left[\left(\int_0^T\left(1+\|Z_s\|+\|Z_s\tr{\varepsilon h}\|\right)\|\hat{Z}_s^h\| ds\right)^{pq+\eta}  \right]\\
 &\leq C\E\left[\left(\int_0^T\left(1+\|Z_s\|+\|Z_s\tr{\varepsilon h}\|\right)^2 ds \right)^{\frac{pq+\eta}{2}}\left( \int_0^T \|\hat{Z}_s^h\|^2 ds\right)^{\frac{pq+\eta}{2}} \right]\\
 &\leq  C\E\left[ \left(\int_0^T(1+\|Z_s\|+\|Z_s\tr{\varepsilon h}\|)^2 ds \right)^{pq+\eta}\right]^{1/2} \E\left[\left( \int_0^T \|\hat{Z}_s^h\|^2 ds\right)^{pq+\eta} \right]^{1/2}\\
 &\leq C\left( 1+ \E\left[ \left(\int_0^T \|Z_s\|^2 ds\right)^{p'}\right]^{1/q'}\right) \E\left[\left( \int_0^T \|\hat{Z}_s^h\|^2 ds\right)^{pq+\eta} \right]^{1/2}<+\infty,
 \end{align*} where $p', q'>1$ using H\"older Inequality and Proposition \ref{prop.cam}. Hence, taking limit as $\varepsilon$ goes to $0$ in \eqref{eq:quadratemp1} we get that $\lim_{\varepsilon \to 0} \E\left[\sup_{s\in [t,T]} |Y_s^\varepsilon-\hat{Y}^h_s|^{2p}\right] + \E\left[\left(\int_t^T \|Z_s^\varepsilon-\hat{Z}^h_s\|^2 ds\right)^{p}\right]=0$. Following the same lines as in the proof of Theorem \ref{thm:yzd12}, one can use a priori estimates for quadratic growth BSDEs to obtain that $\hat{Y}^h$ and $\hat{Z}^h$ are linear operators-valued r.v.. This proves that $Y_t$ and $\int_t^T Z_s^{ \top} \cdot dW_s$ belongs to $\D^{1,\infty}$ by Theorem \ref{th:newcarD}. In particular, $Z \textbf{1}_{[t,T]}$ belongs to $L^2([t,T];(\D^{1,2})^d)$ (see \cite{pardouxpeng}). Moreover, since $(D_tY,D_tZ)$ is the solution of the stochastic linear BSDE \eqref{eq:DY} for any $t\in [0,T]$ and Assumptions $(D_\infty)$ and $(Q)$ hold, from the relation $(D_t Y_t)^j=(Z_t)^j$ for any $j$ in $\{1,\ldots,d\}$, and for all $t\in[0,T]$ we obtain $Z \textbf{1}_{[t,T]}\in L^\infty([t,T];(\D^{1,2})^d)$.
 \qed
\end{proof}

\begin{Remark}
We would like to point out that our conditions cover the case of Markovian quadratic BSDEs presented in \cite[Theorem 2.9]{DosReis_Imkeller}. Indeed, assume that we consider a forward-backward system of the form \eqref{fbsde} where the solution process $X$ to the forward SDE is $m$-dimensional with $m$ a positive integer (so that we match with the notations and assumptions of \cite[Theorem 2.9]{DosReis_Imkeller}) under assumptions, $(D_\infty)$, $(Q)$, $(A_1)$, $(A_2) (i)$ and where $(A_2) (ii)$ is replaced by the following assumption:
\begin{itemize}
\item[$(A_2) (ii')$] $f : [0,T]\times \mathbb{R}^3 \longrightarrow \R^m\times \real \times \R^d$ is continuously differentiable in $(x,y,z)$ and satisfying for some $C>0$
$$ \exists q\in\R_+,\ \|f_x(t,x,y,z)\| \leq C(1+\abs{y}+\|z\|^2+\|x\|^q), \ \forall (t,x,y,z) \in [0,T]\times \R^m\times\R\times\R^d,$$
\end{itemize}
where $f_x:=\left(\frac{\partial f}{\partial x_{l}}\right)_{l \in \{1,\ldots, m\}}$ denotes the gradient of $f$ with respect to the variable $x$.
Under these assumptions, we can check that $(H_{1,\infty})$ and $(H_{2,\infty})$ are in force. To see this we just make a comment about how the proof of Proposition \ref{prop:a:implique:h} has to be modified to obtain $(H_{1,\infty})$, whereas $(H_{2,\infty})$ is met trivially. Using the notations of this proof one can manage a term of the form:
$$  \E\left[\left(\int_0^T \left\|\varepsilon^{-1}(X_t\tr{\varepsilon h}-X_t)-\langle D X_t, \dot{h} \rangle_\h\right\| \|f_x(t,\bar{X_t},Y_t,Z_t)\| dt\right)^p\right] $$
as follows:
\begin{align*}
&\E\left[\left(\int_0^T \left\|\varepsilon^{-1}(X_t\tr{\varepsilon h}-X_t)-\langle D X_t, \dot{h} \rangle_\h\right\| \|f_x(t,\bar{X_t},Y_t,Z_t)\| dt\right)^p\right]\\
&\leq C \E\left[\sup_{t\in [0,T]} \left\|\frac{X_t\tr{\varepsilon h}-X_t}{\varepsilon}-\langle D X_t, \dot{h} \rangle_\h\right\|^p \left(\int_0^T (1+\|X_t\|^q+\|X_t \tr{\varepsilon h}\|^q+|Y_t| + \|Z_t\|^2) dt\right)^p\right]\\
&\leq C \E\left[\sup_{t\in [0,T]} \left\|\varepsilon^{-1}(X_t\tr{\varepsilon h}-X_t)-\langle D X_t, \dot{h} \rangle_\h\right\|^{2p}\right]^{1/2} \\
&\quad \times\E\left[\left(\int_0^T (1+\|X_t\|^q+\|X_t \tr{\varepsilon h}\|^q+|Y_t| + \|Z_t\|^2) dt\right)^{2p}\right]^{1/2},
\end{align*}
which goes to $0$ as $\varepsilon$ goes to $0$ since $Z$ belongs to $\H^2_{\rm{BMO}}$ and since $Y$ is bounded. The term involving $A^{2,\varepsilon}$ can be treated similarly.
\end{Remark}

\appendix
\section{Appendix}

\numberwithin{equation}{section}

The following lemma was remarked in \cite[Remark 2]{Sugita} with the set of polynomial cylindrical functions $\mathcal P$, we provide a proof of it with the set of cylindrical functions $\mathcal S$.

\begin{Lemma}
\label{lemma:SugitabisStep1Step1}
Let $p>1$ and $F$ be in $L^p(\R)$, $G\in \mathcal S$ and $h\in H$. The mapping $\varepsilon \longmapsto \E[F \circ \tau_{\varepsilon h} G]$ is differentiable in $\varepsilon$ and 
\begin{equation}
\label{eq:step1}
\frac{d}{d\varepsilon}  \E[F \circ \tau_{\varepsilon h} \; G] = \E[F \circ \tau_{\varepsilon h} \, \delta(G h)].
\end{equation}
\end{Lemma}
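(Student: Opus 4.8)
The plan is to transfer the shift from $F$ onto $G$ via the Cameron--Martin formula, so that $\varepsilon\longmapsto \E[F\tr{\varepsilon h}\,G]$ becomes $\varepsilon\longmapsto \E[F\,\Phi(\varepsilon)]$ for a functional $\Phi(\varepsilon)$ that is explicitly $C^1$ in $\varepsilon$, then to differentiate under the expectation, and finally to recognize the resulting derivative as $\E[F\tr{\varepsilon h}\,\delta(Gh)]$ through a second use of Cameron--Martin. Concretely, I would set $\mathcal E_\varepsilon:=\exp\big(\varepsilon\int_0^T\dh(s)\cdot dW_s-\tfrac{\varepsilon^2}{2}\int_0^T\|\dh(s)\|^2ds\big)$, which by \reff{exp.int} applied to $\varepsilon h\in H$ belongs to every $L^r(\R)$, $r\geq 1$, and satisfies the elementary bound $\mathcal E_\varepsilon\leq e^{|\varepsilon|\,|W(h)|}$. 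Writing $G=f(W(h_1),\dots,W(h_n))$, Lemma \ref{lemma:shift2} applied to the deterministic integrands $\dh_i$ gives $W(h_i)\tr{-\varepsilon h}=W(h_i)-\varepsilon\langle h_i,h\rangle_H$, hence by Lemma \ref{lemma:UstunelZakai} one has $G\tr{-\varepsilon h}=f\big(W(h_1)-\varepsilon\langle h_1,h\rangle_H,\dots,W(h_n)-\varepsilon\langle h_n,h\rangle_H\big)$, which together with its $\varepsilon$-derivative is bounded uniformly in $\varepsilon$ since $f\in C_b^\infty(\R^n)$. Because $\big(G\tr{-\varepsilon h}\big)\tr{\varepsilon h}=G$ and the random variable $F\,(G\tr{-\varepsilon h})$ lies in $L^1(\R)$ by H\"older's inequality (using $F\in L^p(\R)$, $G$ bounded, $\mathcal E_\varepsilon\in L^{\bar p}(\R)$), Proposition \ref{prop.cam} then yields $\E[F\tr{\varepsilon h}\,G]=\E\big[F\,(G\tr{-\varepsilon h})\,\mathcal E_\varepsilon\big]=:\E[F\,\Phi(\varepsilon)]$, which incidentally also shows that the map is well defined.

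Next I would differentiate under the integral sign. Pathwise $\Phi$ is differentiable, and the product rule together with $\partial_\varepsilon(G\tr{-\varepsilon h})=-(\langle\nabla G,h\rangle_H)\tr{-\varepsilon h}$ and $\partial_\varepsilon\mathcal E_\varepsilon=\mathcal E_\varepsilon\big(W(h)-\varepsilon\No{h}_H^2\big)$ gives
$$\tfrac{d}{d\varepsilon}\Phi(\varepsilon)=-(\langle \nabla G,h\rangle_H)\tr{-\varepsilon h}\,\mathcal E_\varepsilon+(G\tr{-\varepsilon h})\,\mathcal E_\varepsilon\big(W(h)-\varepsilon\No{h}_H^2\big).$$
To legitimate the interchange I would produce, for $\varepsilon$ in any bounded interval $I$, an integrable dominating function: since $G$ and $\nabla G$ are bounded, the bound $\mathcal E_\varepsilon\leq e^{|\varepsilon||W(h)|}$ and the Gaussianity of $W(h)$ give $\sup_{\varepsilon\in I}\big|\tfrac{d}{d\varepsilon}\Phi(\varepsilon)\big|\leq C_I\,(1+|W(h)|)\,e^{C_I|W(h)|}$, whose product with $|F|$ is in $L^1(\R)$ by H\"older. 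This yields $\tfrac{d}{d\varepsilon}\E[F\tr{\varepsilon h}\,G]=\E\big[F\,\tfrac{d}{d\varepsilon}\Phi(\varepsilon)\big]$.

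Finally I would identify the last expectation. By \reff{eq:deltaprod} one has $\delta(Gh)=G\,W(h)-\langle\nabla G,h\rangle_H$, and since the shift distributes over products while $W(h)\tr{-\varepsilon h}=W(h)-\varepsilon\No{h}_H^2$, one gets $\delta(Gh)\tr{-\varepsilon h}=(G\tr{-\varepsilon h})\big(W(h)-\varepsilon\No{h}_H^2\big)-(\langle\nabla G,h\rangle_H)\tr{-\varepsilon h}$, that is, $\tfrac{d}{d\varepsilon}\Phi(\varepsilon)=(\delta(Gh)\tr{-\varepsilon h})\,\mathcal E_\varepsilon$. Applying Proposition \ref{prop.cam} once more to $F\,(\delta(Gh)\tr{-\varepsilon h})$, and using $\big(\delta(Gh)\tr{-\varepsilon h}\big)\tr{\varepsilon h}=\delta(Gh)$ together with $F\,\delta(Gh)\in L^1(\R)$ (the random variable $\delta(Gh)$ has moments of all orders), one obtains $\E\big[F\,\tfrac{d}{d\varepsilon}\Phi(\varepsilon)\big]=\E\big[F\,(\delta(Gh)\tr{-\varepsilon h})\,\mathcal E_\varepsilon\big]=\E[F\tr{\varepsilon h}\,\delta(Gh)]$, which is \eqref{eq:step1}.

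The only genuinely delicate point, and the one I would expect to require care, is the domination step: it demands that the $L^r$-norms of $\mathcal E_\varepsilon$, and hence of $\tfrac{d}{d\varepsilon}\Phi(\varepsilon)$, stay bounded as $\varepsilon$ ranges over a neighbourhood, which is exactly what the bound $\mathcal E_\varepsilon\leq e^{|\varepsilon||W(h)|}$ (or, alternatively, \reff{exp.int} applied along $\varepsilon h$) delivers. Everything else is bookkeeping with the Cameron--Martin formula and the product rule.
\qed
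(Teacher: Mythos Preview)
Your proof is correct and follows essentially the same route as the paper: transfer the $\varepsilon$-shift from $F$ onto $G$ via the Cameron--Martin formula, differentiate in $\varepsilon$ on the $G$-side, and identify the result through \reff{eq:deltaprod}. The only difference is organisational---you apply Cameron--Martin once upfront and then differentiate under the expectation via an explicit dominating function, whereas the paper writes the difference quotient first and justifies the passage to the limit through a uniform $L^q$-bound (uniform integrability).
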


\begin{proof}
Let $\eta>0$, by the Cameron-Martin formula, we have that 
\begin{align*}
&\eta^{-1} \left( \E[F \tr{(\eta+\varepsilon) h} G] - \E[F \tr{\varepsilon h} G] \right)\\
&= \E\left[F \tr{\varepsilon h} \frac{G\circ \tau_{-\eta h} \exp\left(\eta \int_0^T \dot{h}(u)^{\top} \cdot \ dW_u -\frac{|\eta|^2}{2} \int_0^T \|\dot{h}(u)\|^2 du\right)- G}{\eta}\right].
\end{align*}
Hence 
\begin{align*}
&\lim_{\eta\to 0} \eta^{-1} \left( \E[F \tr{(\eta+\varepsilon) h} G] - \E[F \tr{\varepsilon h} G] \right)\\ 
&= \E\left[F \tr{\varepsilon h} \lim_{\eta\to 0} \frac{G\circ \tau_{-\eta h} \exp\left(\eta \int_0^T \dot{h}(u)^{\top} \cdot \ dW_u -\frac{|\eta|^2}{2} \int_0^T \|\dot{h}(u)\|^2 du\right)- G}{s}\right]\\
&= \E\left[F \tr{\varepsilon h} \lim_{\eta\to 0} \left( \frac{G\circ \tau_{-\eta h} - G}{\eta} + G\circ \tau_{-\eta h} \frac{ \exp\left(\eta \int_0^T \dot{h}(u)^{\top} \cdot \ dW_u -\frac{|\eta|^2}{2} \int_0^T \|\dot{h}(u)\|^2 du\right)-1}{\eta}\right)\right],
\end{align*}
where the exchange between the limit and the expectation is justified by the fact that 
\begin{equation}\label{eq:unifint}
\sup_{\eta\in (0,1]} \eta^{-q} \E\left[\left| G\circ \tau_{-\eta h} \exp\left(\eta \int_0^T \dot{h}(u)^{\top} \cdot\ dW_u -\frac{|\eta|^2}{2} \int_0^T \|\dot{h}(u)\|^2 du\right)- G \right|^q\right]<+\infty
\end{equation} for any $q>1$ and by the Cameron-Martin formula. Indeed for any $r$ in $(1,p)$ we have by H\"older Inequality:
\begin{align*}
&\E\left[\left|F \tr{\varepsilon h}\frac{G\circ \tau_{-\eta h} \exp\left(\eta \int_0^T \dot{h}(u)^{\top} \cdot\ dW_u -\frac{|\eta|^2}{2} \int_0^T \|\dot{h}(u)\|^2 du\right)- G}{\eta}\right|^r\right]\\
&\leq \underbrace{\E\left[|F \tr{\varepsilon h}|^{p_1}\right]^{r/p_1}}_{=:E_1}\underbrace{\E\left[\abs{\frac{G\circ \tau_{-\eta h} \exp\left(\eta \int_0^T \dot{h}(u)^{\top} \cdot\ dW_u -\frac{|\eta|^2}{2} \int_0^T \|\dot{h}(u)\|^2 du\right)- G}{\eta}}^{r p_2}\right]^{1/p_2}}_{=:E_2},
\end{align*}
where $r<p_1<p$ and $p_2$ is the H\"older conjugate of $p_1/r$. Using Cameron-Martin Formula for $E_1$, Relation \eqref{exp.int}  and H\"older Inequality with $r_1=\frac{p}{p_1}$ and $r_2$ such that $\frac{1}{r_1}+\frac{1}{r_2}=1$, we deduce that:
$$ E_1\leq \E[\abs{F}^p]^{r/p} \E\left[ \abs{\mathcal{E}\left( \int_0^T \dot{h}_s^{\top} \cdot\ dW_s\right)}^{r_2}\right]^{1/r_2}<+\infty.$$

We now turn to $E_2$, for any $q>1$
\begin{align*}
&\sup_{\eta\in (0,1]}\E\left[\abs{\frac{G\circ \tau_{-\eta h} \mathcal{E}\left(\eta \int_0^T \dot{h}(u)^{\top} \cdot\ dW_u\right) - G}{\eta}}^{q}\right]\\
&\leq \underbrace{\sup_{\eta \in (0,1]}\E\left[\abs{\frac{G\circ \tau_{-\eta h}-G}{\eta} \mathcal{E}\left(\eta\int_0^T \dot{h}(u)^{\top} \cdot\ dW_u\right) }^{q}\right]}_{=:A_1}+ \underbrace{\sup_{\eta\in (0,1]}\E\left[\abs{\frac{ \mathcal{E}\left(\eta\int_0^T \dot{h}(u)^{\top}  \cdot \ dW_u\right) - 1}{\eta}G}^{q}\right]}_{=:A_2},
\end{align*}

hence, on the one hand there exists $\alpha_1,\alpha_2>1$ such that:
\begin{align*}
A_1&\leq \sup_{\eta \in (0,1]} \eta^{-q} \E\left[\abs{G\circ \tau_{-\eta h}-G}^{q\alpha_1}\right]^\frac{1}{\alpha_1} \E\left[\abs{\mathcal{E}\left(\eta \int_0^T \dot{h}(u)^{\top} \cdot\ dW_u\right) }^{q\alpha_2}\right]^\frac{1}{\alpha_2}<+\infty,
\end{align*} using the fact that $G$ is polynomial, so $G$ is locally Lipschitz and we conclude by Relation \eqref{exp.int}. On the other hand, using the mean value theorem and Relation \eqref{exp.int}, we obtain also $A_2<+\infty$. We deduce that Relation \eqref{eq:unifint} holds. Moreover, given that $G\in \mathcal{P}$ is polynomial, we deduce that $ \frac{G\circ \tau_{-\eta h} - G}{\eta}\underset{\eta\to 0}{\to}-\langle \nabla G,h\rangle_H$ a.s.. Hence, 
\begin{align*}
&\lim_{\eta\to 0} \eta^{-1} \left( \E[F \tr{(\eta+\varepsilon) h} G] - \E[F \tr{\varepsilon h} G] \right)\\ 
&= \E\left[F \tr{\varepsilon h} \lim_{\eta\to 0} \left( \frac{G\circ \tau_{-\eta h} - G}{\eta} + G\circ \tau_{-\eta h} \frac{ \exp\left(\eta \int_0^T \dot{h}(u)^{\top} \cdot \ dW_u -\frac{|\eta|^2}{2} \int_0^T \|\dot{h}(u)\|^2 du\right)-1}{\eta}\right)\right]\\
&= \E\left[F \tr{\varepsilon h} \left( - \langle \nabla G, h\rangle_{H} + G \delta(h)\right)\right]= \E\left[F \tr{\varepsilon h} \, \delta(G h)\right],
\end{align*}
by \eqref{eq:deltaprod}, so \eqref{eq:step1} holds.
\qed
\end{proof}

\begin{Lemma}
\label{lemma:L12}
Let $\alpha$ in $L^{2}([0,T];\D^{1,2})$. Then for any $p$ in $(1,2)$, 
$$\lim_{\varepsilon \to 0} \E\left[\int_0^T \left|\frac{\alpha_s \tr{\varepsilon h} - \alpha_s }{\varepsilon}-\langle \nabla \alpha_s,h\rangle_{H}\right|^p ds\right]=0.$$
\end{Lemma}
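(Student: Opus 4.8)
```latex
The plan is to reduce the statement to the one-dimensional, time-pointwise convergence already encapsulated in Lemma \ref{lemma:D12nec} (applied with the ambient exponent $2$), and then upgrade to an integrated-in-time convergence via a uniform integrability argument in the product space $[0,T]\times\Omega$. First I would fix $h\in H$ and $p\in(1,2)$, and pick $q$ with $p<q<2$. For Lebesgue-almost every $s\in[0,T]$ the random variable $\alpha_s$ lies in $\D^{1,2}$, so Lemma \ref{lemma:D12nec} gives
$$\frac{\alpha_s\tr{\varepsilon h}-\alpha_s}{\varepsilon}\underset{\varepsilon\to0}{\longrightarrow}\langle\nabla\alpha_s,h\rangle_H\quad\text{in }L^q(\real),$$
hence in particular $\E\big[\big|\varepsilon^{-1}(\alpha_s\tr{\varepsilon h}-\alpha_s)-\langle\nabla\alpha_s,h\rangle_H\big|^p\big]\to0$ pointwise in $s$.

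To pass this pointwise (in $s$) convergence under the $\int_0^T\!\cdot\,ds$ sign, I would invoke dominated convergence on $[0,T]$, for which I need an $\varepsilon$-uniform, $L^1([0,T])$ domination of $g_\varepsilon(s):=\E\big[\big|\varepsilon^{-1}(\alpha_s\tr{\varepsilon h}-\alpha_s)-\langle\nabla\alpha_s,h\rangle_H\big|^p\big]$. Here I reuse the computation already performed inside the proof of Lemma \ref{lemma:D12nec}: since $\alpha_s\in\D^{1,2}$, relation \eqref{eq:cons} holds, and Jensen's inequality followed by the Cameron-Martin formula (Proposition \ref{prop.cam}) and H\"older's inequality yield, for a suitable small $\eta>0$ with $p+\eta<2$,
$$\E\left[\left|\frac{\alpha_s\tr{\varepsilon h}-\alpha_s}{\varepsilon}\right|^{p+\eta}\right]\le \E\big[|\langle\nabla\alpha_s,h\rangle_H|^2\big]^{\frac{p+\eta}{2}}\sup_{t\in(0,1)}\E\left[\mathcal E\Big(t\!\int_0^T\!\dot h_r\cdot dW_r\Big)^{\frac{2}{2-p-\eta}}\right]^{\frac{2-p-\eta}{2}},$$
and the last supremum is finite by \eqref{exp.int} and does not depend on $s$. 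Combining with $|\langle\nabla\alpha_s,h\rangle_H|\le\|h\|_H\|\nabla\alpha_s\|_H$ we get $g_\varepsilon(s)\le C\,\big(\E[\|\nabla\alpha_s\|_H^2]\big)^{p/2}+C\,\big(\E[\|\nabla\alpha_s\|_H^2]\big)^{(p+\eta)/2}$ up to a triangle inequality, and since $\alpha\in L^2([0,T];\D^{1,2})$ the function $s\mapsto\E[\|\nabla\alpha_s\|_H^2]$ is in $L^1([0,T])$; as $p/2<1$ and $(p+\eta)/2<1$, so is its power, hence the bound is integrable in $s$ uniformly in $\varepsilon$.

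With the pointwise convergence $g_\varepsilon(s)\to0$ for a.e.\ $s$ and the $\varepsilon$-uniform $L^1([0,T])$ bound in hand, the dominated convergence theorem on $[0,T]$ gives $\int_0^T g_\varepsilon(s)\,ds\to0$, which by Tonelli's theorem is exactly the claimed statement. I expect the only genuinely delicate point to be the $s$-uniform control of the stochastic-exponential moment, i.e.\ checking that $\sup_{t\in(0,1)}\E[\mathcal E(t\int_0^T\dot h_r\cdot dW_r)^{r}]<+\infty$ for the relevant exponent $r=\tfrac{2}{2-p-\eta}$; this is precisely \eqref{exp.int} applied to the (fixed) function $t\dot h$ with $t\in(0,1)$, so it is handled exactly as in Lemma \ref{lemma:D12nec}, and everything else is a routine application of H\"older's and Jensen's inequalities together with dominated convergence.
```
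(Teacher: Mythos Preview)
Your argument is correct. The only cosmetic slip is in the form of the dominating function: after the triangle inequality you need a bound on $\E[|Q_\varepsilon(s)|^p]$, not on $\E[|Q_\varepsilon(s)|^{p+\eta}]$; passing from the latter to the former via Jensen gives simply $g_\varepsilon(s)\le C\,(\E[\|\nabla\alpha_s\|_H^2])^{p/2}$, with a single exponent $p/2<1$. This does not affect the proof, since either power of an $L^1([0,T])$ function is again in $L^1([0,T])$.

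Your route is genuinely different from the paper's. The paper treats $\alpha$ globally as an element of $\D^{1,2}(\h)$ (the $\h$-valued Malliavin--Sobolev space), invokes Sugita's (RAC)+(SGD) characterization at that vector-valued level to get both the integral representation \eqref{eq:cons} in $\h$ and convergence in probability of $\varepsilon^{-1}(\alpha\tr{\varepsilon h}-\alpha)$ in the $\h$-norm, and then closes via uniform integrability in $\omega$ of the random variables $\int_0^T|\cdot|^p\,ds$ (bounding an $r$-th moment for some $r\in(p,2)$). You instead slice in time: apply the scalar Lemma~\ref{lemma:D12nec} to each $\alpha_s\in\D^{1,2}$, obtain $g_\varepsilon(s)\to0$ for a.e.\ $s$, and upgrade via dominated convergence on $[0,T]$ with the dominating function produced by the very same estimate underlying Lemma~\ref{lemma:D12nec}. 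Your approach avoids any appeal to the Hilbert-space-valued theory and is slightly more elementary; the paper's approach, by contrast, packages the argument once and for all at the $\D^{1,2}(\h)$ level, which makes the convergence-in-probability step immediate from (SGD) rather than from a separate time-integrated bound. Both proofs ultimately rest on the same moment estimate coming from \eqref{eq:cons} combined with Cameron--Martin and \eqref{exp.int}.
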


\begin{proof}
Note first that the space $L^{2}([0,T];\D^{1,2})$ can be identified with the space $\D^{1,2}(\h)$ which is the completion of the set of $\h$-valued r.v. of the form: 
$$ \sum_{i=1}^n F_i u_i, \quad F_i \in \mathcal{S}, \; u_i \in L^2([0,T]), \; n \geq 1, $$
with respect to the norm $\|\cdot\|_{1,2,2}$ defined as:
$$ \|u\|_{1,2,2}^2:=\E[\|u\|_{L^2([0,T])}^2] + \E[\|\nabla u\|_{H\otimes L^2([0,T])}^2].$$ 
Alternatively, an element $u$ in $\D^{1,2}(\h)$ is identified with a stochastic process such that for almost avery $t$ in $[0,T]$, $u_t$ belongs to $\D^{1,2}$ and such that
$$ \E[\|\nabla u\|_{H\otimes L^2([0,T])}^2]=\E\left[\int_0^T \int_0^T |D_s u_t|^2 ds dt\right]<+\infty. $$
Hence we can assume that $\alpha$ belongs to $\D^{1,2}(\h)$. Thus by \cite[Theorem 3.1]{Sugita}, $\alpha$ satisfies (RAC) and (SGD), which entails in this setting that for any $h$ in $H$, there exists a $\h$-valued r.v. $\tilde{\alpha}_h$ such that $\tilde{\alpha}_h=\alpha$ in $\h$, $\P_0$-a.s., and for any $\varepsilon>0$ 
$$ \frac{\tilde{\alpha}_h \tr{\varepsilon h}-\tilde{\alpha}_h}{\varepsilon} =\varepsilon^{-1} \int_0^\varepsilon \langle \nabla \alpha \tr{s h}, h\rangle_H ds, \; \textrm{ in } \h, \; \P_0-a.s..  $$
Using Lemma \ref{lemma:UstunelZakai} we thus get that for any $r \in (p,2)$, it holds that:
\begin{align*}
\E\left[\int_0^T \left|\varepsilon^{-1} (\alpha_s \tr{\varepsilon h}-\alpha_s) \right|^r ds\right]&=\E\left[\int_0^T \left|\varepsilon^{-1} ((\tilde{\alpha}_h)(s) \tr{\varepsilon h}-(\tilde{\alpha}_h)(s)) \right|^r ds \right]\\
&\leq \E\left[\int_0^T \varepsilon^{-1} \int_0^\varepsilon |\langle \nabla \alpha_s \tr{u h}, h\rangle_H|^r du ds \right]\\
&\leq C \int_0^T \E\left[ |\langle \nabla \alpha_s, h\rangle_H|^p \right]^{r/p} ds \\
&\leq C \E\left[\int_0^T |\langle \nabla \alpha_s, h\rangle_H|^p ds \right]^{r/p}\\
&\leq C \|h\|_H^{r} \E\left[\|\nabla \alpha\|_{H\otimes \h}^p \right]^{r/p}<+\infty,
\end{align*}  
where we have used Cameron-Martin formula and similar computations to those of the proof of Lemma \ref{lemma:D12nec}, and $C$ denotes a positive constant which can differ from line to line. Hence, the family $\left(\int_0^T \left| \varepsilon^{-1} (\alpha_s \tr{\varepsilon h}-\alpha_s) - \langle \nabla \alpha_s,h\rangle_{H} \right|^p ds\right)_{\varepsilon \in (0,1)}$ is uniformly integrable. In addition, by Property (SGD), $\varepsilon^{-1} (\alpha \tr{\varepsilon h}-\alpha)$ converges in probability to $\langle \nabla \alpha, h\rangle_H$ (with respect to the norm $L^2([0,T])$) which implies that $\int_0^T \left|\varepsilon^{-1} (\alpha_s \tr{\varepsilon h}-\alpha_s) - \langle \nabla \alpha_s,h\rangle_{H}\right|^p ds$ converges in probability to $0$ as $\varepsilon$ goes to $0$, which provides the result.
\qed
\end{proof}

\section*{Acknowledgments}
Thibaut Mastrolia is grateful to R\'egion Ile-De-France for financial support. The equivalences between $(i)-(ii)-(iii)$ in Theorem \ref{th:newcarD} were stated and proved in a previous version of this paper. The potential equivalence with these properties and $(iv)$ has been suggested to us by Thomas Cass and we are very grateful to him for this remark that we have been able to prove here.

\end{document}